\titleformat{\paragraph}[runin]
{\normalfont\normalsize\bfseries}{\theparagraph}{1em}{}
\titleformat{\subparagraph}[runin]
{\normalfont\normalsize\bfseries}{\thesubparagraph}{1em}{}
\titlespacing*{\chapter} {0pt}{50pt}{40pt}
\titlespacing*{\section} {0pt}{3.5ex plus 1ex minus .2ex}{2.3ex plus .2ex}
\titlespacing*{\subsection} {0pt}{3.25ex plus 1ex minus .2ex}{1.5ex plus .2ex}
\titlespacing*{\subsubsection}{0pt}{3.25ex plus 1ex minus .2ex}{1.5ex plus .2ex}
\titlespacing*{\paragraph} {0pt}{3.25ex plus 1ex minus .2ex}{1em}
\titlespacing*{\subparagraph} {\parindent}{3.25ex plus 1ex minus .2ex}{1em}
\newtheorem{theorem}{Theorem}[section]
\newtheorem{lemma}[theorem]{Lemma}
\newtheorem{proposition}[theorem]{Proposition}
\newtheorem{corollary}[theorem]{Corollary}
\theoremstyle{definition}
\newtheorem{definition}[theorem]{Definition}
\newtheorem{example}[theorem]{Example}
\theoremstyle{remark}
\newtheorem{remark}[theorem]{Remark}
\DeclareMathOperator{\Aut}{Aut}
\DeclareMathOperator{\coH}{coH}
\DeclareMathOperator{\ide}{id}
\DeclareMathOperator{\ima}{Im}
\DeclareMathOperator{\Prim}{Prim}
\DeclareMathOperator{\Z}{Z}
\DeclareMathOperator{\op}{op}
\DeclareMathOperator{\ord}{ord}
\newcommand{\ov}{\overline}
\newcommand{\ot}{\otimes}
\newcommand{\wh}{\widehat}
\newcommand{\wt}{\widetilde}
\newcommand{\ep}{\epsilon}
\newcommand{\De}{\Delta}
\newcommand{\hs}{\hspace{-0.5pt}}
\newcommand{\xcirc}{\hs \circ \hs}
\begin{document}

\title{Cleft extensions of a family of braided Hopf algebras}

\author{Mauricio Da Rocha}
\address{C\'\i clo B\'asico Com\'un\\ Pabell\'on 3 - Ciudad Universitaria\\ (1428) Buenos
Aires, Argentina.} \curraddr{} \email{}

\thanks{Mauricio Da Rocha was supported by UBACyT 20020110100048 (UBA)}

\author{Jorge A. Guccione}
\address{Departamento de Matem\'atica\\ Facultad de Ciencias Exactas y Naturales-UBA, Pabell\'on~1-Ciudad Universitaria\\ Intendente Guiraldes 2160 (C1428EGA) Buenos Aires, Argentina.}
\address{Instituto de Investigaciones Matem\'aticas ``Luis A. Santal\'o"\\ Pabell\'on~1-Ciudad Universitaria\\ Intendente Guiraldes 2160 (C1428EGA) Buenos Aires, Argentina.}
\email{vander@dm.uba.ar}

\author{Juan J. Guccione}
\address{Departamento de Matem\'atica\\ Facultad de Ciencias Exactas y Naturales-UBA\\ Pabell\'on~1-Ciudad Universitaria\\ Intendente Guiraldes 2160 (C1428EGA) Buenos Aires, Argentina.}
\address{Instituto Argentino de Matem\'atica-CONICET\\ Savedra 15 3er piso\\ (C1083ACA) Buenos Aires, Argentina.}
\email{jjgucci@dm.uba.ar}

\thanks{Jorge A. Guccione and Juan J. Guccione were supported by UBACyT 20020110100048 (UBA) and PIP 11220110100800CO (CONICET)}

\subjclass[2010]{Primary 16S40; Secondary 16T05}
\keywords{Braided Hopf algebras, rank 1 Hopf algebras, Cleft extensions}

\begin{abstract} We introduce a family of braided Hopf algebras that generalizes the rank $1$ Hopf algebras introduced by Krop anad Radford and we study its cleft extensions.
\end{abstract}

\maketitle

\section*{Introduction} Let $k$ be a field. In the paper~\cite{K-R}, the authors associated a Hopf algebra $H_\mathcal{D}$ over $k$, with each data $\mathcal{D}:=(G,\chi,z,\lambda)$, consisting of:

\begin{itemize}

\smallskip

\item[-] a finite group $G$,

\smallskip

\item[-] a character $\chi$ of $G$ with values in $k$,

\smallskip

\item[-] a central element $z$ of $G$,

\smallskip

\item[-] an element $\lambda\in k$,

\smallskip

\end{itemize}
such that $\chi^n=1$ or $\lambda(z^n-1_G)=0$, where $n$ is the order of $\chi(z)$. As an algebra $H_{\mathcal{D}}$ is generated by $G$ and $x$, subject to the group relations of $G$,
$$
xg=\chi(g)gx\quad\text{for all $g\in G$}\qquad\text{and}\qquad x^n=\lambda(z^n-1_G).
$$
The coalgebra structure of $H_{\mathcal{D}}$ is determined by
\begin{align*}
&\Delta(g):=g\ot g\quad\text{for $g\in G$,}&& \Delta(x):=1\ot x + x\ot z,\\
&\epsilon(g):=1\quad \text{for $g\in G$,} &&\epsilon(x):=0,
\end{align*}
and its antipode is given by
$$
S(g)=g^{-1}\quad\text{and}\quad S(x)=-xz^{-1}.
$$
As was point out in~\cite{K-R}, as a vector space $H_{\mathcal{D}}$ has basis $\{gx^m| g \in G, 0 \le m < n\}$. Consequently $\dim H_{\mathcal{D}}=n|G|$.

The algebras $H_{\mathcal{D}}$ are called {\em rank~$1$ Hopf algebras}. The simplest examples are the Taft algebras $H_{n^2}$, which are the rank~$1$ Hopf algebra obtained taking $\mathcal{D}:=(C_n,\chi,g,1)$, where $C_n = \langle g \rangle$ is the cyclic group of order $n>1$ and $\chi(g)$ is a primitive $n$-th root of~$1$.

In~\cite{M} Masuoka studied the cleft extensions of the Taft algebras, giving a very elegant description of its crossed product systems. In \cite{D-T} the Masuoka description was reproduced with simplified proofs, and studying this description were derived several interesting consequences. Motivated by these works, in this paper we introduce a family of braided Hopf algebras that generalize the algebras $H_{\mathcal{D}}$, and we study their cleft extensions.

\smallskip

The paper is organized in the following way:

In Section~1 we recall the well know notions of Gaussian binomial coefficients and braided Hopf algebra, and we make a quick review of some results obtained in \cite{G-G}. The unique new result in this section are the formulas obtained in Theorem~\ref{equiv entre cleft, H-Galois con base normal e isomorfo a un producto cruzado}(5). In Section~2 we associate a braided Hopf algebra with each data $\mathcal{D}=(G,\chi,z,\lambda,q)$ (where $G$ is a finite group, $\chi$ is a character of $G$, $z$ is a central element of $G$ and $q,\lambda$ are scalars) that satisfies suitable conditions. The main result is Corollary~\ref{algebras de Hopf trenzadas de K-R}, in which the algebras $H_{\mathcal{D}}$ are introduced. When $q=1$, we recover the rank~$1$ Hopf algebras defined by Krop y Radford. In Section~3 we describy the $H_{\mathcal{D}}$-space structures, the $H_{\mathcal{D}}$-comodule structures and the $H_{\mathcal{D}}$-comodule algebra structures (in Proposition~\ref{estructuras trenzadas de HsubD}, Corollary~\ref{segunda caracterizacion de HsubD comodules} and Theorem~\ref{caracterizacion de HsubD comodule algebras}, respectively). In Section~4 we characterize the $H_{\mathcal{D}}$ cleft extensions, and, finally, in Section~5 we study two particular examples.

\section{Preliminaries} In this paper $k$ is a field, we work in the category of $k$-vector spaces, and consequently all the maps are $k$-linear maps. Moreover we let $U\ot V$ denote the tensor product $U\ot_k V$ of each pair of vector spaces $U$ and $V$. We assume that the algebras are associative unitary and the coalgebras are coassociative counitary. For an algebra $A$ and a coalgebra $C$, we let $\mu\colon A\ot A\to A$, $\eta\colon k\to A$, $\Delta\colon C\to C\ot C$ and $\eta\colon C\to k$ denote the multiplication map, the unit, the comultiplication map and the counit, respectively, specified with a subscript if necessary.

\subsection{Gaussian binomial coefficients} Let $q$ be a variable. For any $j\in \mathds{N}_0$ set
$$
(j)_q := \sum_{i=0}^{j-1} q^i = \frac{q^j-1}{q-1}\quad\text{and}\quad (j)!_q := (1)_q(2)_q\cdots (j)_q = \frac{(q-1)(q^2-1) \cdots (q^j-1)}{(q-1)^j}.
$$
The Gauss binomials are the rational functions in $q$ defined by
\begin{equation}
\binom{i}{j}_q :=\frac{(i)!_q}{(j)!_q (i-j)!_q}\qquad\text{for $0\le j\le i$.}\label{eq3}
\end{equation}
A direct computation shows that
$$
\binom{r}{0}_q = \binom{i}{i}_q = 1\quad\text{and}\quad \binom{i}{j}_q = q^{i-j}\binom{i-1}{j-1}_q + \binom{i-1}{j}_q \quad\text{for $0<j<i$.}
$$
From these equalities it follows easily that the Gauss binomials are actually polynomials. The Gauss binomials can be evaluated in arbitrary elements of $k$, but the equality~\eqref{eq3} only makes sense for $q=1$ and for $q\ne 1$ such that $q^l\ne 1$ for all $l\le \max(j,i-j)$. We will need the following well known result ($q$-binomial formula). Let $B$ be a $k$-algebra and $q\in k$. If $x,y\in B$ satisfy $yx = qxy$, then
\begin{equation}
(x+y)^i = \sum_{j=0}^i \binom{i}{j}_q x^jy^{i-j}\qquad\text{for each $i\ge 0$.}\label{eq12}
\end{equation}
Let $i,j\ge 0$ and let $0\le l\le i+j$. Using the equality~\eqref{eq12} to compute $(x+y)^i(x+y)^j$ in two different ways and comparing coefficients we obtain that
\begin{equation}
\binom{i+j}{l}_q=\sum_{\substack{ 0\le s \le i\\ 0\le t \le j\\ s+t=l }} q^{(i-s)t}\binom{i}{s}_q\binom{j}{t}_q.\label{eq13}
\end{equation}

\subsection{Braided Hopf algebras} Let $V$, $W$ be vector spaces and let $c\colon V\ot W \to W\ot V$ be a map. Recall that:

\begin{itemize}

\smallskip

\item[-] If $V$ is an algebra, then $c$ is compatible with the algebra structure of $V$ if
$$
\quad c \xcirc(\eta\ot W)= W\ot \eta\quad\text{and}\quad c \xcirc (\mu\ot W)= (W\ot \mu)\xcirc(c\ot V) \xcirc (V\ot c).
$$

\smallskip

\item[-] If $V$ is a coalgebra, then $c$ is compatible with the coalgebra structure of $V$ if
$$
\quad (W\ot \ep)\xcirc c = \ep\ot W\quad\text{and}\quad (W\ot \De) \xcirc c = (c\ot V)\xcirc (V\ot c)\xcirc (\De \ot W).
$$

\smallskip

\end{itemize}
More precisely, the first equality in the first item says that $c$ is {\em compatible with the unit of $V$} and the second one says that it is {\em compatible with the multiplication map of $V$}, while the first equality in the second item says that $c$ is {\em compatible with the counit of $V$} and the second one says that it is {\em compatible with the comultiplication map of $V$}. Of course, there are similar compatibilities when $W$ is an algebra or a coalgebra.

\smallskip

\begin{definition}\label{def: braided bialgebra} A {\em braided bialgebra} is a vector space $H$ endowed with an algebra structure, a coalgebra structure and a braiding operator $c\in \Aut_k(H\ot H)$, called the {\em braid} of $H$, such that $c$ is compatible with the algebra and coalgebra structures of $H$, $\eta$ is a coalgebra morphism, $\ep$ is an algebra morphism and
$$
\De\xcirc\mu = (\mu\ot \mu)\xcirc(H\ot c \ot H)\xcirc(\De \ot \De).
$$
Furthermore, if there exists a map $S\colon H\to H$, which is the inverse of the identity map for the convolution product, then we say that $H$ is a {\em braided Hopf algebra} and we call $S$ the {\em antipode} of~$H$.
\end{definition}

Usually $H$ denotes a braided bialgebra, understanding the structure maps, and $c$ denotes its braid. If necessary, we will write $c_H$ instead of $c$.

\smallskip

Let $A$ and $B$ be algebras. It is well known that if a map $c \colon B\ot A \longrightarrow A\ot B$ is compatible with the algebra structures of $A$ and $B$, then $A\ot B$ endowed with the multiplication map
$$
\mu := (\mu_A\ot \mu_B)\xcirc (A\ot c\ot B),
$$
is an associative algebra with unit $1\ot 1$, which is called {\em the twisted tensor product of $A$ with $B$ associated with $c$} and denoted $A\ot_c B$. Similarly, if $C$ and $D$ are coalgebras and $c\colon C\ot D \longrightarrow D\ot C$ is a map compatible with the coalgebra structures of $C$ and $D$, then $C\ot D$ endowed with the comultiplication map
$$
\Delta := (C\ot c \ot D)\xcirc (\Delta_C\ot \Delta_D),
$$
is a coassociative coalgebra with counit $\epsilon\ot \epsilon$, which is called {\em the twisted tensor product of $C$ with $D$ associated with $c$} and denoted $C\ot^c D$.

\begin{remark} Let $H$ be a vector space which is both an algebra and a coalgebra, and let
$$
c\colon H\ot H\longrightarrow H\ot H
$$
be a braiding operator. Assume that $c$ is compatible with the algebra and the coalgebra structures of $H$. Then $H$ is a braided bialgebra iff its comultiplication map $\Delta \colon H\to H\ot_c H$ and its counit $\epsilon \colon H\to k$ are algebra maps.
\end{remark}

\subsection{Left $\bm{H}$-spaces, left $\bm{H}$-algebras and left $\bm{H}$-coalgebras}

\begin{definition}\label{def: H-braided space} Let $H$ be a braided bialgebra. A {\em left $H$-space} $(V,s)$ is a vector space $V$, endowed with a bijective map $s\colon H\ot V \longrightarrow V\ot H$, called the {\em left transposition of $H$ on $V$}, which is compatible with the bialgebra structure of $H$ and satisfies
$$
(s\ot H)\xcirc (H\ot s)\xcirc (c\ot V) = (V\ot c)\xcirc (s\ot H)\xcirc (H\ot s)
$$
(compatibility of $s$ with the braid). Sometimes, when it is evident, the map $s$ is not explicitly specified. In these cases we will say that $V$ is a left $H$-braided space in order to point out that there is a left transposition involved in the definition. We adopt a similar convention for all the definitions below. Let $(V',s')$ be another left $H$-space. A $k$-linear map $f\colon V \to V'$ is said to be a {\em morphism of left $H$-spaces}, from $(V,s)$ to $(V',s')$, if $(f\ot H)\xcirc s = s' \xcirc (H\ot f)$.
\end{definition}

\begin{remark}\label{basta verificar sobre generadores} Let $s\colon H\ot V \longrightarrow V\ot H$ be a map compatible with the unit, the mul\-tiplication map and the braid of $H$ and let $X\subseteq H$ be a set that generates $H$ as an algebra. In order to show that $s$ is a left transposition it suffices to check the compatibility of $s$ with the counit and the comultiplication map of $H$ on simple tensors $h\ot v$ with $h\in X$ and $v\in V$.
\end{remark}

We let $\mathcal{LHB}$ denote the category of all left $H$-braided spaces. It is easy to check that this is a monoidal category with

\begin{itemize}

\smallskip

\item[-] unit $(k,\tau)$, where $\tau\colon H\ot k\to k\ot H$ is the flip,

\smallskip

\item[-] tensor product
$$
\qquad\quad (U,s_U)\ot(V,s_V) := (U\ot V, s_{U\ot V}),
$$
where $s_{U\ot V}$ is the map $s_{U\ot V}:= (U\ot s_V)\xcirc(s_U\ot V)$,

\smallskip

\item[-] the usual associativity and unit constraints.

\smallskip

\end{itemize}

\begin{definition}\label{def: braided alg} A {\em left $H$-algebra} $(A,s)$ is an algebra in $\mathcal{LHB}$.
\end{definition}

\begin{definition}\label{def: transp alg} A {\em left transposition of $H$ on an algebra $A$} is a bijective map $s\colon H\ot A\longrightarrow A\ot H$, satisfying

\smallskip

\begin{enumerate}

\item $(A,s)$ is a left $H$-space,

\smallskip

\item $s$ is compatible with the algebra structure of $A$.

\end{enumerate}

\end{definition}

\begin{remark}\label{re: alg in LB_H} A left $H$-algebra is nothing but a pair $(A,s)$ consisting of an algebra $A$ and a left transposition $s\colon H\ot A \longrightarrow A\ot H$. Let $(A',s')$ be another left \mbox{$H$-algebra}. A map $f\colon A\to A'$ is a morphism of left $H$-algebras, from $(A,s)$ to $(A',s')$, iff it is a morphism of standard algebras and $(f\ot H) \xcirc s = s' \xcirc (H\ot f)$.
\end{remark}

\begin{definition}\label{def: braided coalg} A {\em left $H$-coalgebra} $(C,s)$ is a coalgebra in $\mathcal{LHB}$.
\end{definition}

\begin{definition}\label{def: transp coalg} A {\em left transposition of $H$ on a coalgebra $C$} is a bijective map $s\colon H\ot C\longrightarrow C\ot H$, satisfying

\begin{enumerate}

\smallskip

\item $(C,s)$ is a left $H$-space,

\smallskip

\item $s$ is compatible with the coalgebra structure of $C$.

\smallskip

\end{enumerate}
\end{definition}

\begin{remark}\label{re: coalg in LB_H} A left $H$-coalgebra is nothing but a pair $(C,s)$ consisting of a coalgebra $C$ and a left transposition $s\colon H\ot C \longrightarrow C\ot H$. Let $(C',s')$ be another left $H$-co\-algebra. A map $f\colon C\to C'$ is a morphism of left $H$-coalgebras, from $(C,s)$ to $(C',s')$, iff it is a morphism of standard coalgebras and $(f\ot H) \xcirc s = s' \xcirc (H\ot f)$.
\end{remark}

Since $(H,c)$ is an algebra and a coalgebra in $\mathcal{LHB}$, it makes sense to consider $(H,c)$-modules and $(H,c)$-comodules in this monoidal category.

\subsection{Left $\bm{H}$-modules and left $\bm{H}$-module algebras}

\begin{definition}\label{def: H-braided module} We will say that $(V,s)$ is a {\em left $H$-module} to mean that it is a left $(H,c)$-module in $\mathcal{LHB}$.  Notice that the classical left $H$-modules can be identified with the left $H$-modules $(V,s)$ in which $s$ is the flip.
\end{definition}

\begin{remark}\label{re: H-braided module} A left $H$-space $(V,s)$ is a left $H$-module iff $V$ is a standard left $H$-module and
$$
s\xcirc (H\ot \rho) = (\rho\ot H) \xcirc (H\ot s) \xcirc (c\ot V),
$$
where $\rho$ denotes the action of $H$ on $V$. Let $(V',s')$ be another left $H$-module. A map $f\colon V\to V'$ is a {\em morphism of left $H$-modules}, from $(V,s)$ to $(V',s')$, iff it is an $H$-linear map and the equality $(f\ot H)\xcirc s = s'\xcirc (H\ot f)$ holds. We let ${}_H\mathcal{LHB}$ denote the category of left $H$-modules in $\mathcal{LHB}$.
\end{remark}

\begin{proposition}[\cite{G-G}*{Proposition~5.6}]\label{prop: _H sub LB H es monoidal} The category ${}_H\mathcal{LHB}$ is monoidal. Its unit is $(k,\tau)$ endowed with the trivial left $H$-module structure, and the tensor product of the left $H$-modules $(U,s_U)$ and $(V,s_V)$, with actions $\rho_U$ and $\rho_V$ respectively, is the left $H$-space $(U,s_U)\ot (V,s_V)$, endowed with the left \mbox{$H$-module} action given by
$$
\rho_{U\ot V}:= (\rho_U\ot \rho_V)\xcirc(H\ot s_U\ot V)\xcirc (\De_H\ot U\ot V).
$$
The associativity and unit constraints are the usual ones.
\end{proposition}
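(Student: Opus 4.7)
The plan is to verify, in the order stated, that: (i) the formula for $\rho_{U\ot V}$ defines a classical left $H$-module structure on $U\ot V$; (ii) the left $H$-space $(U\ot V, s_{U\ot V})$ together with this action is an object of ${}_H\mathcal{LHB}$; (iii) the unit $(k,\tau)$ with the trivial action $\rho_k = \eta$ is a unit for this tensor product; and (iv) the usual associativity and unit constraints of vector spaces are morphisms of left $H$-modules. Steps (i), (iii) and (iv) are short bookkeeping; the real content is step (ii).

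For step (i), unitality of $\rho_{U\ot V}$ reduces to the counit axiom of $\Delta_H$ combined with compatibility of $s_U$ with the counit. Associativity reduces to coassociativity of $\Delta_H$ combined with compatibility of $s_U$ with the multiplication of $H$ and with the hypothesis that $\rho_U$ and $\rho_V$ are already associative $H$-actions.

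For step (ii), I would apply the characterization recalled in Remark~\ref{re: H-braided module}: it suffices to check
$$
s_{U\ot V}\xcirc(H\ot \rho_{U\ot V}) = (\rho_{U\ot V}\ot H)\xcirc(H\ot s_{U\ot V})\xcirc(c\ot U\ot V).
$$
Both sides unfold to long composites in $\Delta_H$, $\rho_U$, $\rho_V$, $s_U$, $s_V$ and $c$. The match is then carried out by a sequence of local moves: one use of compatibility of $c$ with the comultiplication of $H$ (to split the outgoing copy of $H$ before the $s_U$-factor), then the hexagonal compatibility of $s_U$ with $c$ from Definition~\ref{def: H-braided space} (to slide $c$ past a pair of $s_U$'s), and finally the individual compatibilities of $\rho_U$ with $s_U$ and of $\rho_V$ with $s_V$ (to absorb each braiding with the corresponding action). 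This diagram chase, which mixes all of the braided-space and braided-module axioms at once, is the main technical obstacle.

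For step (iii), compatibility of $s_U$ with the unit of $H$ and the counit axiom of $\Delta_H$ give, under the canonical identifications, $\rho_{k\ot V} = \rho_V$ and $\rho_{U\ot k} = \rho_U$; the transposition collapses to $\tau$ in both cases. For step (iv), the associativity constraint $a_{U,V,W}$ and the unit constraints $l_U,r_U$ are already morphisms of left $H$-spaces by the monoidal structure on $\mathcal{LHB}$ recalled before the proposition, and their $H$-linearity with respect to $\rho_{U\ot V}$ follows from coassociativity of $\Delta_H$ together with the fact that $s_U$ is natural in its vector-space argument. The pentagon and triangle axioms then hold on the nose, being inherited from $\mathcal{LHB}$.
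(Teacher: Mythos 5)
The paper does not actually prove this proposition: it is imported verbatim as \cite{G-G}*{Proposition~5.6}, so there is no internal argument to compare yours against. Your plan of attack --- check that $\rho_{U\ot V}$ is a unital associative action, check the compatibility of Remark~\ref{re: H-braided module} for $(U\ot V,s_{U\ot V})$, then handle the unit object and the constraints --- is the standard and correct decomposition, and the toolkit you list for step (ii) (compatibility of $c$ with $\De_H$, the hexagon relating $s_U$ and $c$, and the module compatibilities of $\rho_U$ and $\rho_V$) is indeed what the diagram chase consumes. But the proposal stops short of being a proof: the step you yourself call ``the main technical obstacle'' is described rather than carried out, and the functoriality of the tensor product on morphisms of left $H$-modules (that $f\ot g$ is again $H$-linear) is not addressed at all.

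More importantly, several of your stated reductions invoke the wrong axioms, which suggests the computations behind them were not performed. Unitality of $\rho_{U\ot V}$ does not use the counit axiom or the compatibility of $s_U$ with the counit; it uses $\De_H(1_H)=1_H\ot 1_H$ (that is, that $\eta$ is a coalgebra morphism, part of Definition~\ref{def: braided bialgebra}) together with $s_U\xcirc(\eta\ot U)=U\ot \eta$ and unitality of $\rho_U,\rho_V$. Associativity of $\rho_{U\ot V}$ is not a consequence of coassociativity: its starting point is the braided bialgebra axiom $\De\xcirc\mu=(\mu\ot\mu)\xcirc(H\ot c\ot H)\xcirc(\De\ot\De)$, which injects a copy of $c$ that must then be eliminated using the very compatibility $s_U\xcirc(H\ot\rho_U)=(\rho_U\ot H)\xcirc(H\ot s_U)\xcirc(c\ot U)$ of Remark~\ref{re: H-braided module} --- an ingredient absent from your list for step (i). Finally, in step (iv) the $H$-linearity of the associativity constraint rests on coassociativity together with the compatibility of $s_U$ with the comultiplication map of $H$, namely $(U\ot\De)\xcirc s_U=(s_U\ot H)\xcirc(H\ot s_U)\xcirc(\De\ot U)$, not on any ``naturality of $s_U$ in its vector-space argument'', which is not an axiom available here. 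None of these defects is unfixable, but as written the proposal is an annotated plan with mislabelled ingredients rather than a proof.
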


\begin{definition}\label{def: H-braided mod alg} We say that $(A,s)$ is a {\em left $H$-module algebra} if it is an algebra in ${}_H \mathcal{LHB}$.
\end{definition}

\begin{remark}\label{re: car de H-braid mod alg} $(A,s)$ is a left $H$-module algebra iff the following facts hold:

\begin{enumerate}

\smallskip

\item $A$ is an algebra and a standard left $H$-module,

\smallskip

\item $s$ is a left transposition of $H$ on $A$,

\smallskip

\item $s\xcirc (H\ot \rho) = (\rho\ot H) \xcirc (H\ot s) \xcirc (c\ot A)$,

\smallskip

\item $\rho \xcirc (H\ot\mu_A) = \mu_A\xcirc (\rho\ot \rho)\xcirc (H\ot s\ot A)\xcirc (\De_H \ot A\ot A)$,

\smallskip

\item $\rho(h\ot 1_A) = \ep(h)1_A$ for all $h\in H$,

\smallskip

\end{enumerate}
where $\rho$ denotes the action of $H$ on $A$.

\smallskip

Let $(A',s')$ be another left $H$-module algebra. A map $f\colon A\to A'$ is a {\em morphism of left \mbox{$H$-module} algebras}, from $(A,s)$ to $(A',s')$, iff it is an $H$-linear morphism of standard algebras that satisfies $(f\ot H)\xcirc s= s'\xcirc (H\ot f)$.
\end{remark}

\subsection{Right $\bm{H}$-comodules and right $\bm{H}$-comodule algebras}

\begin{definition}\label{def: H-braided comodule} We will say that $(V,s)$ is a {\em right $H$-comodule} if it is a right $(H,c)$-comodule in $\mathcal{LHB}$.
\end{definition}

\begin{remark}\label{re: H-braided comodule} A left $H$-space $(V,s)$ is a right $H$-comodule iff $V$ is a standard right $H$-comodule and
\begin{equation}
(\nu\ot H)\xcirc s = (V\ot c)\xcirc (s\ot H)\xcirc (H\ot \nu),\label{eq6}
\end{equation}
where $\nu$ denotes the coaction of $H$ on $V$. Let $(V',s')$ be another right $H$-comodule. A map $f\colon V\to V'$ is a {\em morphism of right $H$-comodules}, from $(V,s)$ to $(V',s')$, iff it is an $H$-colinear map and $(f\ot H)\xcirc s = s'\xcirc (H\ot f)$. We let $\mathcal{LHB}^H$ denote the category of right $H$-comodules in $\mathcal{LHB}$.
\end{remark}

\begin{definition}\label{coinvariantes} Let $(V,s)$ be a right $H$-comodule. An element $v\in V$ is said to be {\em coinvariant} if $\nu(v)=v\ot 1_H$.
\end{definition}

\begin{remark} For each right $H$-comodule $(V,s)$, the set $V^{\coH}$, of coinvariant elements of $V$, is a vector subspace of $V$. Furthermore, $s(H\ot V^{\coH})=V^{\coH}\ot H$, and the pair $(V^{\coH},s_{V^{\coH}})$, where $s_{V^{\coH}}\colon H\ot V^{\coH}\to V^{\coH}\ot H$ is the restriction of $s$, is a left $H$-space.
\end{remark}

\begin{proposition}[\cite{G-G}*{Proposition 5.2}]\label{prop: LHB spu H es monoidal} The category $\mathcal{LHB}^H$ is monoidal. Its unit is $(k,\tau)$, en\-do\-wed with the trivial right $H$-comodule structure, and the tensor product of the right $H$-comodu\-les $(U,s_U)$ and $(V,s_V)$, with coactions $\nu_U$ and $\nu_V$ respectively, is $(U,s_U)\ot (V,s_V)$, endowed with the right $H$-comodule coaction
$$
\nu_{U\ot V}:= (U\ot V\ot \mu_H)\xcirc(U\ot s_V\ot H) \xcirc (\nu_U\ot \nu_V).
$$
The associativity and unit constraints are the usual ones.
\end{proposition}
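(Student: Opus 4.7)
The plan is to verify that the claimed coaction $\nu_{U\ot V}$ makes $(U\ot V,s_{U\ot V})$ a right $H$-comodule in the sense of Remark~\ref{re: H-braided comodule}, and then to check that the associativity and unit constraints inherited from $\mathcal{LHB}$ are morphisms in $\mathcal{LHB}^H$. Since $\mathcal{LHB}$ is already known to be monoidal, the underlying left $H$-space structure on $U\ot V$ (given by $s_{U\ot V}=(U\ot s_V)\xcirc(s_U\ot V)$) and the associativity/unit isomorphisms on the level of vector spaces are for free; the whole question is that the new coaction interacts correctly with them.

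First, I would dispatch the unit object. For $(k,\tau)$ with trivial coaction $v\mapsto v\ot 1_H$, the standard comodule axioms are trivial, while \eqref{eq6} reduces exactly to the compatibility of $c$ with $\eta_H$, which holds because $c$ is compatible with the algebra structure of $H$. Next, for the tensor product, I would split the proof of ``right $H$-comodule'' into two parts: (a) $\nu_{U\ot V}$ is a standard right $H$-coaction, and (b) $(U\ot V,s_{U\ot V},\nu_{U\ot V})$ satisfies \eqref{eq6}. For (a), coassociativity follows by expanding $(\nu_{U\ot V}\ot H)\xcirc\nu_{U\ot V}$, using the coassociativity of $\nu_U$ and $\nu_V$, the associativity of $\mu_H$, and the compatibility of $s_V$ with $\De_H$ to shuffle the middle $H$-tensorand past $V$; similarly, the counit axiom uses the counit compatibility of $s_V$ together with $(V\ot\ep_H)\xcirc\nu_V=V$.

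The main step is (b). Unfolding, I must show
\begin{equation*}
(\nu_{U\ot V}\ot H)\xcirc(U\ot s_V)\xcirc(s_U\ot V)=(U\ot V\ot c)\xcirc\bigl((U\ot s_V)\xcirc(s_U\ot V)\ot H\bigr)\xcirc(H\ot\nu_{U\ot V}).
\end{equation*}
On the left side, one first pushes $\nu_V$ past $s_U$ using \eqref{eq6} for $U$, and on the right side one expands $\nu_{U\ot V}$ on the innermost slot and uses \eqref{eq6} for $V$ to move the coaction of $V$ over to the other side of $s_V$. The remaining discrepancy is then resolved by the compatibility of $s_U$ with $\mu_H$ (to collapse two adjacent copies of $H$) and the compatibility of $s_V$ with the braid $c$; this is exactly where the axiom
$(s\ot H)\xcirc(H\ot s)\xcirc(c\ot V)=(V\ot c)\xcirc(s\ot H)\xcirc(H\ot s)$
enters. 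This is the main obstacle: the bookkeeping of the seven or eight string strands is delicate, but once one draws the corresponding braid diagram the equality is forced.

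Finally, I would verify that the associativity constraint $(U\ot V)\ot W\to U\ot(V\ot W)$ is a morphism of right $H$-comodules, i.e. $\nu_{(U\ot V)\ot W}=\nu_{U\ot(V\ot W)}$, which follows from the associativity of $\mu_H$ together with the same hexagon-type identity for $s_V$ and $s_W$ already used in checking that $\mathcal{LHB}$ is monoidal. The unit constraints $k\ot V\cong V\cong V\ot k$ are morphisms of comodules because the trivial coaction on $k$ together with the compatibility of $s_V$ with $\eta_H$ and $\ep_H$ makes the required diagrams commute. Assembling these verifications yields the monoidal structure claimed.
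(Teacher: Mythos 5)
The paper offers no proof of this proposition: it is quoted verbatim from \cite{G-G}*{Proposition 5.2}, so your direct verification is by necessity a different route from the paper's (which simply outsources the work). Your overall architecture is the standard and correct one: check the unit object, check that $\nu_{U\ot V}$ is a coaction, check the compatibility \eqref{eq6} for the pair $(U\ot V,s_{U\ot V})$, and check that the associativity and unit constraints are colinear. Two points in your ingredient lists are under-specified, though. First, coassociativity of $\nu_{U\ot V}$ cannot follow from coassociativity of $\nu_U,\nu_V$, associativity of $\mu_H$ and compatibility of $s_V$ with $\De_H$ alone: the indispensable input is the braided bialgebra axiom $\De\xcirc\mu=(\mu\ot\mu)\xcirc(H\ot c\ot H)\xcirc(\De\ot\De)$ (already in the classical case one needs $\De$ multiplicative), and you never invoke it. Second, condition \eqref{eq6} for $(V,s_V)$ is needed already in that coassociativity computation --- the second application of $\nu_{U\ot V}$ hits the $V$-leg that has just emerged from $s_V$, so you must commute $\nu_V$ past $s_V$ --- and not only in your step (b). With those two axioms added where indicated, the diagram chase you describe closes, and the treatment of the constraints (associativity via compatibility of the transpositions with $\mu_H$, units via compatibility with $\eta_H$ and $\ep_H$) is as in the cited source. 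What your approach buys is a self-contained argument; what the citation buys the authors is brevity, since the full string-diagram verification is long but routine.
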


\begin{definition}\label{def: H-braided comod alg} We say that $(A,s)$ is a {\em right $H$-comodule algebra} if it is an algebra in $\mathcal{LHB}^H$.
\end{definition}

\begin{remark}\label{re: H-braided comod alg} $(A,s)$ is a right $H$-comodule algebra iff the following facts hold:

\begin{enumerate}

\smallskip

\item $A$ is an algebra and a standard right $H$-comodule,

\smallskip

\item $s$ is a left transposition of $H$ on $A$,

\smallskip

\item $(\nu\ot H)\xcirc s = (A\ot c) \xcirc (s\ot H) \xcirc (H\ot \nu)$,

\smallskip

\item $\nu\xcirc \mu_A = (\mu_A\ot \mu_H)\xcirc (A\ot s\ot H)\xcirc (\nu \ot \nu)$,

\smallskip

\item $\nu(1_A) = 1_A\ot 1_H$,

\smallskip

\end{enumerate}
where $\nu$ denotes the coaction of $H$ on $A$.

\smallskip

Let $(A',s')$ be another right $H$-comodule algebra. A map $f\colon A\to A'$ is a {\em morphism of right $H$-comodule algebras}, from $(A,s)$ to $(A',s')$, iff it is an $H$-colinear morphism of standard algebras that satisfies $(f\ot H) \xcirc s = s' \xcirc (H\ot f)$.
\end{remark}

Recall that $A\ot_s H$ denote the algebra with underlying vector space $A\ot H$, multiplication map
$$
\mu_{A\ot_s H}:= (\mu_A\ot \mu_H)\xcirc (A\ot s\ot H)
$$
and unit $1_A\ot 1_H$. Conditions~(4) and~(5) of Remark~\ref{re: H-braided comod alg} say that $\nu\colon A\to A\ot_s H$ is a morphism of algebras.

\subsection{Hopf crossed products and $\bm{H}$-extensions}

\begin{definition}\label{def: weak H-modules} A left $H$-space $(V,s)$, endowed with a map $\rho\colon H\ot V\to V$, is said to be a {\em weak left $H$-module} if

\begin{enumerate}

\smallskip

\item $\rho(1_H\ot v) = v$, for all $v\in V$,

\smallskip

\item $s\xcirc (H\ot\rho) = (\rho\ot H)\xcirc (H\ot s)\xcirc (c\ot V)$.

\end{enumerate}
\end{definition}

The category ${}_{wH}\mathcal{LHB}$, of weak left $H$-modules in $\mathcal{LHB}$, becomes a monoidal category in the same way that ${}_H\mathcal{LHB}$ does. A {\em weak left $H$-module algebra} $(A,s)$ is, by definition, an algebra in~${}_{wH}\mathcal{LHB}$.

\begin{remark}\label{re: weak H-modulo algebra} $(A,s)$ is a left weak $H$-module algebra iff $A$ is an usual algebra, $s$ is a left transposition of $H$ on $A$ and the structure map $\rho$ satisfies the following conditions:

\begin{enumerate}

\smallskip

\item $\rho(1_H\ot a) = a$, for all $a\in A$,

\smallskip

\item $s\xcirc (H\ot\rho) = (\rho\ot H)\xcirc (H\ot s)\xcirc (c\ot A)$,

\smallskip

\item $\rho\xcirc (H\ot\mu_A) =\mu_A\xcirc(\rho\ot\rho)\xcirc (H\ot s\ot A)\xcirc (\De_H\ot A\ot A)$,

\smallskip

\item $\rho(h\ot 1_A)= \ep(h)1_A$, for all $h\in H$.

\end{enumerate}
\end{remark}

Let $A$ be an algebra and $s\colon H\ot A \longrightarrow A\ot H$ a left transposition. A map $\rho\colon H\ot A\to A$ is said to be a {\em weak action} of $H$ on $(A,s)$ or a {\em weak $s$-action} of $H$ on $A$, if it satisfies the conditions of the above remark.

\begin{definition}\label{def: normal, cociclo, condicion de modulo torcido} Let $A$ be an algebra, $s\colon H\ot A\longrightarrow A\ot H$ a left transposition and $\rho\colon H\ot A\to A$ a weak action of $H$ on $(A,s)$. Let $\sigma\colon H\ot H \to A$ be a map. We say that $\sigma$ is {\em normal} if
$$
\sigma(1_H\ot h)=\sigma(h\ot 1_H)= \ep(h)\qquad\text{for all $h\in H$,}
$$
and that $\sigma$ is a {\em cocycle that satisfies the twisted module condition} if
\begin{center}
\begin{tikzpicture}[scale=0.48]
\def\mult(#1,#2)[#3]{\draw (#1,#2) .. controls (#1,#2-0.555*#3/2) and (#1+0.445*#3/2,#2-#3/2) .. (#1+#3/2,#2-#3/2) .. controls (#1+1.555*#3/2,#2-#3/2) and (#1+2*#3/2,#2-0.555*#3/2) .. (#1+2*#3/2,#2) (#1+#3/2,#2-#3/2) -- (#1+#3/2,#2-2*#3/2)}
\def\comult(#1,#2)[#3]{\draw (#1,#2)-- (#1,#2-1*#3/2) (#1,#2-1*#3/2) .. controls (#1+0.555*#3/2,#2-1*#3/2) and (#1+1*#3/2,#2-1.445*#3/2) .. (#1+1*#3/2,#2-2*#3/2) (#1,#2-1*#3/2) .. controls (#1-0.555*#3/2,#2-1*#3/2) and (#1-1*#3/2,#2-1.445*#3/2) .. (#1-1*#3/2,#2-2*#3/2)}
\def\laction(#1,#2)[#3,#4]{\draw (#1,#2) .. controls (#1,#2-0.555*#4/2) and (#1+0.445*#4/2,#2-1*#4/2) .. (#1+1*#4/2,#2-1*#4/2) -- (#1+2*#4/2+#3*#4/2,#2-1*#4/2) (#1+2*#4/2+#3*#4/2,#2)--(#1+2*#4/2+#3*#4/2,#2-2*#4/2)}
\def\cocycle(#1,#2)[#3]{\draw (#1,#2) .. controls (#1,#2-0.555*#3/2) and (#1+0.445*#3/2,#2-#3/2) .. (#1+#3/2,#2-#3/2) .. controls (#1+1.555*#3/2,#2-#3/2) and (#1+2*#3/2,#2-0.555*#3/2) .. (#1+2*#3/2,#2) (#1+#3/2,#2-#3/2) -- (#1+#3/2,#2-2*#3/2) (#1+#3/2,#2-#3/2) node [inner sep=0pt,minimum size=3pt,shape=circle,fill] {}}
\def\braid(#1,#2)[#3]{\draw (#1+1*#3,#2) .. controls (#1+1*#3,#2-0.05*#3) and (#1+0.96*#3,#2-0.15*#3).. (#1+0.9*#3,#2-0.2*#3) (#1+0.1*#3,#2-0.8*#3)--(#1+0.9*#3,#2-0.2*#3) (#1,#2-1*#3) .. controls (#1,#2-0.95*#3) and (#1+0.04*#3,#2-0.85*#3).. (#1+0.1*#3,#2-0.8*#3) (#1,#2) .. controls (#1,#2-0.05*#3) and (#1+0.04*#3,#2-0.15*#3).. (#1+0.1*#3,#2-0.2*#3) (#1+0.1*#3,#2-0.2*#3) -- (#1+0.37*#3,#2-0.41*#3) (#1+0.62*#3,#2-0.59*#3)-- (#1+0.9*#3,#2-0.8*#3) (#1+1*#3,#2-1*#3) .. controls (#1+1*#3,#2-0.95*#3) and (#1+0.96*#3,#2-0.85*#3).. (#1+0.9*#3,#2-0.8*#3)}
\def\transposition(#1,#2)[#3]{\draw (#1+#3,#2) .. controls (#1+#3,#2-0.05*#3) and (#1+0.96*#3,#2-0.15*#3).. (#1+0.9*#3,#2-0.2*#3) (#1+0.1*#3,#2-0.8*#3)--(#1+0.9*#3,#2-0.2*#3) (#1+0.1*#3,#2-0.2*#3) .. controls (#1+0.3*#3,#2-0.2*#3) and (#1+0.46*#3,#2-0.31*#3) .. (#1+0.5*#3,#2-0.34*#3)
(#1,#2-1*#3) .. controls (#1,#2-0.95*#3) and (#1+0.04*#3,#2-0.85*#3).. (#1+0.1*#3,#2-0.8*#3) (#1,#2) .. controls (#1,#2-0.05*#3) and (#1+0.04*#3,#2-0.15*#3).. (#1+0.1*#3,#2-0.2*#3) (#1+0.1*#3,#2-0.2*#3) .. controls (#1+0.1*#3,#2-0.38*#3) and (#1+0.256*#3,#2-0.49*#3) .. (#1+0.275*#3,#2-0.505*#3) (#1+0.50*#3,#2-0.66*#3) .. controls (#1+0.548*#3,#2-0.686*#3) and (#1+0.70*#3,#2-0.8*#3)..(#1+0.9*#3,#2-0.8*#3) (#1+0.72*#3,#2-0.50*#3) .. controls (#1+0.80*#3,#2-0.56*#3) and (#1+0.9*#3,#2-0.73*#3)..(#1+0.9*#3,#2-0.8*#3) (#1+#3,#2-#3) .. controls (#1+#3,#2-0.95*#3) and (#1+0.96*#3,#2-0.85*#3).. (#1+0.9*#3,#2-0.8*#3)}
\begin{scope}[xshift=0cm,yshift=0cm]
\comult(0,0)[1];\comult(2,0)[1];\comult(4,0)[1];\braid(0.5,-1)[1]; \braid(2.5,-1)[1]; \braid(1.5,-2)[1];
\draw[rounded corners=3pt] (-0.5,-1) -- (-0.5,-2.5) -- (0,-3.35) -- (0,-3.5); \draw (0.5,-2) -- (0.5,-3);\draw (3.5,-2) -- (3.5,-3);\draw (4.5,-1) -- (4.5,-3); \cocycle(0.5,-3)[1]; \mult(3.5,-3)[1]; \laction(0,-3.5)[0,1]; \cocycle(3,-4)[1]; \draw (2.5,-3) .. controls (2.5,-3.2) and (3,-3.7) .. (3,-4); \draw (1,-4.5) .. controls (1,-4.9) and (1.7,-5.10) .. (1.7,-5.5); \draw (3.5,-5) .. controls (3.5,-5.3) and (3.2,-5.2) .. (3.2,-5.5); \mult(1.7,-5.5)[1.5];
\end{scope}
\begin{scope}[xshift=0cm,yshift=0cm]
\node at (5.1,-3.5){=};
\end{scope}
\begin{scope}[xshift=1.2cm,yshift=-0.5cm]
\comult(5,0)[1];\comult(7,0)[1]; \braid(5.5,-1)[1]; \draw (8,0) -- (8,-3); \draw (4.5,-1) -- (4.5,-2); \draw (7.5,-1) -- (7.5,-2); \cocycle(4.5,-2)[1]; \mult(6.5,-2)[1]; \cocycle(7,-3)[1]; \mult(5.7,-4.5)[1.5];
\draw (7.5,-4) .. controls (7.5,-4.3) and (7.2,-4.2) .. (7.2,-4.5);
 \draw (5,-3).. controls (5,-3.3) and (5.7,-4.2) .. (5.7,-4.5);
\end{scope}
\begin{scope}[xshift=0cm,yshift=0cm]
\node at (10.95,-3.4){and};
\end{scope}
\begin{scope}[xshift=1.2cm,yshift=0cm]
\comult(12,0)[1];\comult(14,0)[1];\draw (15.5,0) -- (15.5,-2); \braid(12.5,-1)[1]; \draw (11.5,-1) -- (11.5,-2); \draw (14.5,-1) -- (14.5,-2); \transposition(14.5,-2)[1]; \draw (13.5,-2) -- (13.5,-3); \draw (15.5,-3) -- (15.5,-4); \transposition(13.5,-3)[1]; \cocycle(14.5,-4)[1]; \draw (12.5,-2) -- (12.5,-4); \laction(12.5,-4)[0,1]; \draw (11.5,-1) -- (11.5,-3.5); \draw (11.5,-3.5) .. controls (11.5,-3.8) and (12.5,-4.7) .. (12.5,-5); \laction(12.5,-5)[0,1]; \mult(13.5,-6)[1]; \draw (15,-5) .. controls (15,-5.3) and (14.5,-5.7) .. (14.5,-6);
\end{scope}
\begin{scope}[xshift=0.3cm,yshift=0cm]
\node at (17,-3.5){=};
\end{scope}
\begin{scope}[xshift=13.4cm,yshift=-0.5cm]
\comult(5,0)[1];\comult(7,0)[1]; \braid(5.5,-1)[1]; \draw (8,0) -- (8,-4); \draw (4.5,-1) -- (4.5,-2); \draw (7.5,-1) -- (7.5,-2); \cocycle(4.5,-2)[1]; \mult(6.5,-2)[1]; \laction(7,-3)[0,1];
\draw (8,-4) .. controls (8,-4.3) and (7.8,-4.2) .. (7.8,-4.5);
\draw (5,-3).. controls (5,-3.3) and (6.3,-4.2) .. (6.3,-4.5); \mult(6.3,-4.5)[1.5];
\end{scope}
\begin{scope}[xshift=16cm,yshift=0cm]
\node at (5.8,-3.5){,};
\end{scope}
\begin{scope}[xshift=18.2cm,yshift=0cm]
\node at (5.8,-3.4){where};
\end{scope}
\begin{scope}[xshift=18.2cm,yshift=0cm]
\cocycle(7,-3)[1];
\end{scope}
\begin{scope}[xshift=19.7cm,yshift=0cm]
\node at (7.5,-3.5){$= \sigma$.};
\end{scope}
\end{tikzpicture}
\end{center}
More precisely, the first equality is the cocycle condition and the second one is the twisted module condition. Finally we say that $\sigma$ is {\em compatible with $s$} if it is a map in $\mathcal{LHB}$. In other words, if
$$
(\sigma\ot H)\xcirc(H\ot c)\xcirc(c\ot H) = s\xcirc(H\ot \sigma).
$$
\end{definition}

Let $s\colon H\ot A\to A\ot H$ be a left transposition, $\rho\colon H\ot A\to A$ a weak $s$-action and $\sigma\colon H\ot H\to A$ a normal cocycle compatible with $s$, that satisfies the twisted module condition. Consider the maps $\chi\colon H\ot A\longrightarrow A\ot H$ and $\mathcal{F}\colon H\ot H\longrightarrow A\ot H$ defined by
$$
\chi := (\rho\ot H)\xcirc (H\ot s)\xcirc(\Delta\ot A)\quad\text{and}\quad\mathcal{F}:= (\sigma\ot \mu_H) \xcirc (H\ot c \ot H)\xcirc (\Delta\ot \Delta).
$$

\begin{definition}\label{def de producto cruzado} The {\em crossed product associated with $(s,\rho,\sigma)$} is the $k$-algebra $A\#_{\rho,\sigma}^s H$, whose underlying $k$-vector space is $A\ot H$ and whose multiplication map is
$$
\mu:= (\mu_A\ot H)\xcirc(\mu_A\ot \mathcal{F}) \xcirc (A\ot \chi \ot H).
$$
\end{definition}

From now on, a simple tensor $a\ot h$ of $A\#_{\rho,\sigma}^s H$ will usually be written $a\# h$.

\begin{theorem}[\cite{G-G}*{Theorems 2.3, 6.3 and 9.3}]\label{propiedades basicas de productos cruzados1} The algebra $A\#_{\rho,\sigma}^s H$ is associative and has unity $1_A\# 1_H$.
\end{theorem}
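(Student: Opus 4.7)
The plan is to prove the two statements---that $1_A\# 1_H$ is a two-sided unit and that $\mu$ is associative---by unwinding the definition of $\mu$ and invoking the axioms of a left transposition, of a weak action, and of a normal cocycle satisfying the twisted module condition.

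For unitality, I would first verify $(1_A\# 1_H)(b\# l)=b\# l$ directly from the formula for $\mu$. Since $\De(1_H)=1_H\ot 1_H$, since $s$ is compatible with the unit of $H$, and since $\rho(1_H\ot b)=b$ by the weak module axiom, the map $\chi$ collapses to $\chi(1_H\ot b)=b\ot 1_H$. In parallel, the compatibility of $c$ with the unit of $H$ together with the normality of $\sigma$ in its first slot yields $\mathcal{F}(1_H\ot l)=1_A\ot l$. Combining these two facts and applying the remaining multiplications in $A$ reduces the product to $b\# l$. The right unit identity $(a\# h)(1_A\# 1_H)=a\# h$ is symmetric: now $\chi(h\ot 1_A)=1_A\ot h$ uses the compatibility of $s$ with the unit of $A$ together with $\rho(h\ot 1_A)=\ep(h)1_A$, while $\mathcal{F}(h\ot 1_H)=1_A\ot h$ uses the compatibility of $c$ with the unit together with normality of $\sigma$ in its second slot.

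For associativity, my strategy is to expand both
$$
\mu\xcirc(\mu\ot A\ot H)\qquad\text{and}\qquad\mu\xcirc(A\ot H\ot\mu)
$$
into a single composition of the elementary maps $\mu_A$, $\mu_H$, $\De_H$, $\rho$, $\sigma$, $s$ and $c$, and to rewrite both into a common normal form by a sequence of local moves. The moves I expect to use are: coassociativity of $\De_H$ and the bialgebra axiom $\De\xcirc\mu_H=(\mu_H\ot\mu_H)\xcirc(H\ot c\ot H)\xcirc(\De\ot\De)$, to align the three comultiplications appearing in $\chi$ and $\mathcal{F}$; the compatibilities of $s$ with $\mu_H$, $\De_H$, and $\mu_A$, and of $s$ with the braid $c$; the equivariance $s\xcirc(H\ot\rho)=(\rho\ot H)\xcirc(H\ot s)\xcirc(c\ot A)$ and the relation $(\sigma\ot H)\xcirc(H\ot c)\xcirc(c\ot H)=s\xcirc(H\ot\sigma)$, which allow $\rho$-values and $\sigma$-values to be moved past $s$- and $c$-strands; the weak module property of $\rho$, to merge nested actions; and, at the heart of the argument, the cocycle condition and the twisted module condition on $\sigma$.

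The hard part will be the bookkeeping. After full expansion the left-hand side of the associativity equation involves three copies of $A$ and six copies of $H$ threaded through many instances of $s$ and $c$, and every rewrite must line up those braidings exactly before the corresponding axiom can be applied. This is where the graphical calculus of $\mathcal{LHB}$ does the real work, since each of the axioms listed above becomes a local string move; the argument is then essentially the diagrammatic version of the classical associativity proof for a Hopf crossed product, with every $A/H$ crossing replaced by $s$, every $H/H$ crossing replaced by $c$, and each such replacement legalized by the naturality axioms built into the definitions of a left transposition and of a weak $s$-action.
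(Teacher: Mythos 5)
The paper itself does not prove this statement: it is imported verbatim from \cite{G-G} (Theorems 2.3, 6.3 and 9.3), so there is no in-paper argument to compare against; the relevant benchmark is the proof in that reference, which proceeds exactly by the diagrammatic expansion you describe. Your unitality argument is correct and essentially complete: $\chi(1_H\ot b)=b\ot 1_H$ follows from $\De(1_H)=1_H\ot1_H$, compatibility of $s$ with the unit of $H$, and $\rho(1_H\ot b)=b$; $\chi(h\ot 1_A)=1_A\ot h$ follows from compatibility of $s$ with the unit of $A$ and $\rho(h\ot 1_A)=\ep(h)1_A$ together with the counit axiom; and the two normality conditions on $\sigma$ handle $\mathcal{F}$ on either side. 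That part stands on its own.

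For associativity, however, what you have written is a plan rather than a proof. You correctly list every axiom that enters --- coassociativity, the braided bialgebra compatibility $\De\xcirc\mu_H=(\mu_H\ot\mu_H)\xcirc(H\ot c\ot H)\xcirc(\De\ot\De)$, the compatibilities of $s$ with $\mu_A$, $\mu_H$, $\De_H$ and $c$, the equivariance of $\rho$ and $\sigma$ under $s$ and $c$, the weak-module property, and finally the cocycle and twisted-module conditions --- and this is precisely the inventory used in \cite{G-G}. But the entire mathematical content of the theorem is the verification that the two expanded composites can actually be rewritten into one another, and that rewriting is where subtleties live: one must check that after applying the bialgebra axiom the extra braid $c$ that appears can in fact be absorbed by the compatibility of $s$ and $\sigma$ with $c$, and that the cocycle and twisted-module conditions are applied to subdiagrams in exactly the configuration in which they are stated (note that the paper's twisted-module condition, drawn in Definition~\ref{def: normal, cociclo, condicion de modulo torcido}, already has transpositions woven into it). Asserting that ``every rewrite must line up those braidings exactly'' without exhibiting the sequence of moves leaves the theorem unproved; to close the gap you would need to carry out the string-diagram computation, or at least exhibit the two or three key intermediate normal forms at which the cocycle condition and the twisted-module condition are invoked.
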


\begin{theorem}[\cite{G-G}*{Propositions 10.3 and 10.4}]\label{propiedades basicas de productos cruzados2} The map
$$
\widehat{s}: H\ot A\#_{\rho,\sigma}^s H \longrightarrow A\#_{\rho,\sigma}^s H \ot H,
$$
defined by $\widehat{s}:= (A\ot c)\xcirc (s\ot H)$ is a left transposition of $H$ on $A\#_{\rho,\sigma}^s H$ and the pair $(A\#_{\rho,\sigma}^s H,\widehat{s})$, endowed with the coaction $\nu_{A\#_{\rho,\sigma}^s H}:=A\ot \Delta$, is a right $H$-comodule algebra.
\end{theorem}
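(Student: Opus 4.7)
The plan is to verify the two assertions separately, reducing everything to the braided-bialgebra axioms for $(H,c)$ and to the hypotheses on $s$, $\rho$ and $\sigma$. For the claim that $\widehat{s}$ is a left transposition of $H$ on $A\#_{\rho,\sigma}^s H$, I would proceed as follows. Compatibility of $\widehat{s}$ with the unit, counit, multiplication and comultiplication of $H$, as well as the hexagon equation relating $\widehat{s}$ to the braid $c$, all split into an $A$-leg handled by $s$ and an $H$-leg handled by $c$. Since $s$ is a left transposition by hypothesis and $c$ satisfies the analogous identities because $(H,c)$ is a braided bialgebra (so $c$ is compatible with $\mu_H$, $\eta_H$, $\De$, $\ep$ and satisfies the Yang--Baxter equation coming from Definition~\ref{def: H-braided space} applied to $H$ itself), each verification is a formal diagram chase.

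The genuinely substantial point of the first assertion is that $\widehat{s}$ is compatible with the multiplication of the crossed product. Expanding $\mu_{A\#_{\rho,\sigma}^s H}$ via the maps $\chi$ and $\mathcal{F}$ of Definition~\ref{def de producto cruzado}, one must verify
$$
\widehat{s}\xcirc(H\ot \mu_{A\#_{\rho,\sigma}^s H}) = (\mu_{A\#_{\rho,\sigma}^s H}\ot H)\xcirc((A\#_{\rho,\sigma}^s H)\ot \widehat{s})\xcirc(\widehat{s}\ot (A\#_{\rho,\sigma}^s H)).
$$
Sliding $s$ past $\rho$ and $\sigma$, this reduces to: the weak-action compatibility $s\xcirc(H\ot \rho)=(\rho\ot H)\xcirc(H\ot s)\xcirc(c\ot A)$ from Definition~\ref{def: weak H-modules}(2); the compatibility of $\sigma$ with $s$, namely $(\sigma\ot H)\xcirc(H\ot c)\xcirc(c\ot H)=s\xcirc(H\ot \sigma)$, from the last display of Definition~\ref{def: normal, cociclo, condicion de modulo torcido}; and the hexagon compatibilities of $c$ with $\mu_H$ and $\De$. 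Each of these lets one commute an occurrence of $s$ or $c$ past the corresponding structure map until the two sides coincide.

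For the second assertion I would apply the criterion of Remark~\ref{re: H-braided comod alg}. That $\nu := A\ot \De$ is a coassociative counital coaction, and that $\nu(1_A\# 1_H)=(1_A\# 1_H)\ot 1_H$, are immediate from the coassociativity, counitality and unitality of $\De$. Condition~(3) of that remark,
$$
(\nu\ot H)\xcirc \widehat{s}=\bigl((A\#_{\rho,\sigma}^s H)\ot c\bigr)\xcirc(\widehat{s}\ot H)\xcirc(H\ot \nu),
$$
reduces, upon unfolding $\widehat{s}=(A\ot c)\xcirc(s\ot H)$ and noting that $\nu$ acts only on the $H$-tensor-factor, to the hexagon identity $(H\ot \De)\xcirc c=(c\ot H)\xcirc(H\ot c)\xcirc(\De\ot H)$ expressing compatibility of $c$ with $\De$.

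The remaining condition, the multiplicativity
$$
\nu\xcirc\mu_{A\#_{\rho,\sigma}^s H}=(\mu_{A\#_{\rho,\sigma}^s H}\ot \mu_H)\xcirc\bigl((A\#_{\rho,\sigma}^s H)\ot \widehat{s}\ot H\bigr)\xcirc(\nu\ot \nu),
$$
is the main obstacle. I would expand $\mu_{A\#_{\rho,\sigma}^s H}$ via $\chi$ and $\mathcal{F}$, apply $A\ot\De$, and then shuffle all the $\De$'s, $s$'s, $c$'s and $\mu_H$'s using: the braided-bialgebra identity $\De\xcirc\mu_H=(\mu_H\ot \mu_H)\xcirc(H\ot c\ot H)\xcirc(\De\ot \De)$; compatibility of $s$ and $c$ with $\De$; and the fact that $\mathcal{F}$ already has the form $(\sigma\ot \mu_H)\xcirc(H\ot c\ot H)\xcirc(\De\ot \De)$, so its $H$-component carries the comultiplication automatically while its $A$-component $\sigma$ is coinvariant (i.e.\ lives in $A\cong A\ot 1_H$). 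After these simplifications the two sides coincide termwise. This is the longest braided calculation of the proof, and is where all the hypotheses---cocycle, twisted module, compatibility of $\sigma$ with $s$, weak action, and the braided-bialgebra hexagons---conspire together.
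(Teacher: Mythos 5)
The paper does not actually prove this theorem: it is quoted from \cite{G-G}*{Propositions 10.3 and 10.4}, so there is no internal proof to compare against, and your direct verification is the natural route --- in outline it matches what that reference does. Three bookkeeping remarks. First, for the ``left transposition'' half, the left-$H$-space axioms for $\widehat{s}$ come for free: $\widehat{s}=(A\ot c)\xcirc(s\ot H)$ is exactly the tensor-product transposition of $(A,s)\ot(H,c)$ in the monoidal category $\mathcal{LHB}$, so only the compatibility with $\mu_{A\#_{\rho,\sigma}^s H}$ genuinely uses the hypotheses on $\rho$ and $\sigma$; your list of identities for that step is right, but you should add the compatibility of $s$ with $\mu_A$ and the braid relation $(s\ot H)\xcirc(H\ot s)\xcirc(c\ot A)=(A\ot c)\xcirc(s\ot H)\xcirc(H\ot s)$, both of which are needed when sliding $\widehat{s}$ past $\chi$. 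Second, you over-assign hypotheses in the comodule-algebra half: condition (4) of Remark~\ref{re: H-braided comod alg} for $\nu=A\ot\De$ follows from $\De\xcirc\mu_H=(\mu_H\ot\mu_H)\xcirc(H\ot c\ot H)\xcirc(\De\ot\De)$, coassociativity, and the compatibility of $s$ and $c$ with $\De$ alone; the cocycle and twisted-module conditions enter only in the associativity statement (Theorem~\ref{propiedades basicas de productos cruzados1}), not here. Third, in condition (3) the identity actually invoked is the compatibility of $c$ with the comultiplication of the \emph{second} tensor factor, $(\De\ot H)\xcirc c=(H\ot c)\xcirc(c\ot H)\xcirc(H\ot\De)$, rather than the mirror identity you quoted; both are braided-bialgebra axioms, so nothing breaks. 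None of these points is a gap --- the sketch correctly identifies where the work lies and which structural identities carry it.
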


\begin{definition}\label{def: H extension} Let $(B,s)$ be a right $H$-comodule algebra and let $i\colon A\hookrightarrow B$ be an algebra inclusion. We say that $(i\colon A\hookrightarrow B,s)$ is an {\em $H$-extension} of $A$ if $i(A) = B^{\coH}$. Let $(i'\colon A\hookrightarrow B',s')$ be another $H$-extension of $A$. We say that $(i\colon A\hookrightarrow B,s)$ and $(i'\colon A\hookrightarrow B',s')$ are {\em equivalent} if there is a right $H$-comodule algebra isomorphism $f\colon (B,s) \to (B',s')$, which is also a left $A$-module homomorphism.
\end{definition}

\begin{remark} For each $H$-extension $(i\colon A\hookrightarrow B,s)$ of $A$, the map $s_A\colon H\ot A\longrightarrow A\ot H$, induced by $s$, is a left transposition (in other words, $(A,s_A)$ is a left $H$-algebra).
\end{remark}

\begin{example}\label{Los productos cruzados son H extensiones} $(i\colon A\hookrightarrow A\#_{\rho,\sigma}^s H,\widehat{s})$, where $i(a):= a\# 1_H$, is an $H$-extension of $A$.
\end{example}

\begin{definition}\label{def: cleft, de Galois y normal} Let $(i\colon A\hookrightarrow B,s)$ be an $H$-extension. We say that:

\begin{enumerate}

\smallskip

\item $(i,s)$ is {\em cleft} if there is a convolution invertible right $H$-comodule map $\gamma\colon (H,c)\to (B,s)$,

\smallskip

\item $(i,s)$ is {\em $H$-Galois} if the map $\beta_B\colon B\ot_A B \longrightarrow B\ot H$, defined by $\beta_B(b\ot b')= (b\ot 1_H)\nu(b')$, where $\nu$ denotes the coaction of $B$, is bijective,

\smallskip

\item $(i,s)$ has the {\em normal basis property} if there exists a left $A$-linear and right $H$-colinear isomorphism $\phi\colon (A\ot H,\wh{s_A}) \longrightarrow (B,s)$, where the coaction of $A\ot H$ is $A\ot \De$ and $\wh{s_A}= (A\ot c)\xcirc (s_A\ot H)$.

\end{enumerate}
\end{definition}

\begin{definition}\label{definicion de aplicacion cleft} Let $(i\colon A\hookrightarrow B,s)$ be an $H$-extension of $A$. If $(i,s)$ is cleft, then each one of the maps $\gamma$ satisfying the conditions required in item~(1) of Definition~\ref{def: cleft, de Galois y normal} is called a {\em cleft map} of $(i,s)$, and if $(i,s)$ has the normal basis property, then each one of the left $A$-linear right and $H$-colinear isomorphism $\phi\colon (A\ot H,\wh{s_A})\longrightarrow (B,s)$ is called a {\em normal basis} of $B$.
\end{definition}

\begin{remark}[\cite{G-G}*{Section 10}] If $\gamma$ is a {\em cleft map} of $(i\colon A\hookrightarrow B,s)$, then $\gamma(1_H)\in B^{\times}$ and the map $\gamma':=\gamma(1_H)^{-1}\gamma$ is a cleft map that satisfies $\gamma'(1_H)=1_B$.
\end{remark}

\begin{lemma}\label{cae en A} Let $H$ be a braided Hopf algebra and let $(i\colon A\hookrightarrow B,s)$ be a cleft $H$-extension, with a cleft map $\gamma$. The map $f\colon H\ot A\to B$, defined by
$$
f:=\mu_B\xcirc (\mu_B\ot B)\xcirc (\gamma\ot i \ot \gamma^{-1})\xcirc(H\ot s_A)\xcirc (\Delta\ot A)
$$
takes its values in $i(A)$.
\end{lemma}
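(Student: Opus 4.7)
The plan is to show that $f(h\ot a)\in B^{\coH}$, which coincides with $i(A)$ by the definition of an $H$-extension. Equivalently, I must verify that $\nu_B$ sends $f(h\ot a)$ to $f(h\ot a)\ot 1_H$. Since $\nu_B\colon B\to B\ot_{s} H$ is a morphism of algebras by items~(4) and~(5) of Remark~\ref{re: H-braided comod alg}, and $f(h\ot a)$ is the product of the three factors $\gamma(h_{(1)})$, $i(a')$ and $\gamma^{-1}(h')$ (where $a'\ot h'$ abbreviates $s_A(h_{(2)}\ot a)$), I would compute $\nu_B(f(h\ot a))$ by applying $\nu_B$ to each factor and multiplying the results inside the twisted tensor product algebra $B\ot_{s} H$.

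Three input formulas are needed. First, $\nu_B\xcirc\gamma=(\gamma\ot H)\xcirc\Delta$, because $\gamma$ is a right $H$-comodule map from $(H,c)$ to $(B,s)$. Second, $\nu_B(i(a))=i(a)\ot 1_H$, because $i(A)\subseteq B^{\coH}$. Third, and crucially, a formula for $\nu_B\xcirc\gamma^{-1}$. I would derive it from the convolution identity $\mu_B\xcirc(\gamma\ot\gamma^{-1})\xcirc\Delta=\eta_B\xcirc\epsilon$ by applying $\nu_B$ to both sides, using the algebra-morphism property together with the already known formula for $\nu_B\xcirc\gamma$, and then solving for $\nu_B\xcirc\gamma^{-1}$ via the antipode $S$ of $H$. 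In the classical unbraided setting this yields $\nu_B(\gamma^{-1}(h))=\gamma^{-1}(h_{(2)})\ot S(h_{(1)})$; in the braided setting the braid $c$ intervenes and the precise formula involves $(\gamma^{-1}\ot S)$ composed with an appropriately braided version of $\Delta$.

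With these three formulas in hand, I would substitute them into $\nu_B(f(h\ot a))$ and expand the resulting product in $B\ot_{s} H$ using its twisted multiplication. Coassociativity of $\Delta$, compatibility of $s$ and of $c$ with the coalgebra structure of $H$, the compatibility of $\gamma$ with $s$ through $s_A$, and finally the antipode identity $\mu_H\xcirc(H\ot S)\xcirc\Delta=\eta_H\xcirc\epsilon$ should collapse the $H$-component to $1_H$, leaving on the $B$-component exactly $f(h\ot a)$, which is the desired equality.

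The main obstacles will be (i) correctly deriving the coaction formula for $\gamma^{-1}$ in the braided context, where both the transposition $s$ and the braid $c$ enter when inverting the convolution identity, and (ii) cleanly tracking the braids and transpositions through the twisted multiplication in $B\ot_{s} H$ so that the antipode identity becomes applicable to the $H$-component. A string-diagrammatic computation in the spirit of~\cite{G-G} should be the cleanest way to execute the final step.
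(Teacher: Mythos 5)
Your proposal is correct and follows essentially the same route as the paper: apply $\nu_B$ to $f$ using its multiplicativity as a map into $B\ot_s H$, the colinearity of $\gamma$, the coinvariance of $i(A)$, and a braided formula for $\nu_B\xcirc\gamma^{-1}$ of the form $(\gamma^{-1}\ot S)\xcirc c\xcirc\Delta$, after which the antipode identity collapses the $H$-component. The only difference is that the paper imports that last formula from \cite{G-G}*{Lemma 10.7} rather than rederiving it from the convolution identity as you propose, which is a matter of packaging rather than of substance.
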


\begin{proof} Let $\lambda_r^X\colon X\to X\ot k$ be the canonical map. We must prove that
$$
\nu\xcirc f = (f\ot\eta )\xcirc (H\ot\lambda_r^A).
$$
A direct computation shows that
\begin{align*}
\nu\xcirc f & = \nu\xcirc \mu_B\xcirc (\mu_B\ot B)\xcirc (\gamma\ot i \ot \gamma^{-1})\xcirc(H\ot s_A)\xcirc (\Delta\ot A)\\
&= (\mu_B\ot\mu_H)\xcirc (B\ot s\ot H)\xcirc (\nu\ot \nu)\xcirc (\mu_B\ot B)\xcirc (\gamma\ot i \ot \gamma^{-1})\xcirc(H\ot s_A)\xcirc (\Delta\ot A)\\
&= (\mu_B\ot \mu_H)\xcirc (\mu_B\ot s \ot H)\xcirc (B\ot s \ot \nu)\xcirc(\nu\ot i \ot \gamma^{-1}) \xcirc(\gamma\ot s_A)\xcirc (\Delta\ot A)\\
%
%
&= (\mu_B\ot \mu_H) \xcirc (\mu_B\ot s \ot H) \xcirc (B\ot i\ot H \ot \nu)\xcirc (\gamma\ot s_A \ot \gamma^{-1})\xcirc (\Delta \ot s_A)\xcirc (\Delta\ot A)\\
&= (\mu_B\ot\mu_H)\xcirc (B\ot s\ot H)\xcirc (\mu_B\ot H\ot \nu\xcirc \gamma^{-1}) \xcirc (B\ot i\ot\Delta)\xcirc (\gamma\ot s_A) \xcirc(\Delta\ot A)\\
& = (\mu_B\ot H)\xcirc (\mu_B\ot L) \xcirc (B\ot i \ot H)\xcirc (\gamma\ot s_A) \xcirc(\Delta\ot A),
\end{align*}
where
$$
L:= (B\ot\mu_H)\xcirc (s\ot H)\xcirc (H\ot\nu\xcirc \gamma^{-1})\xcirc \Delta.
$$
Since, by \cite{G-G}*{Lemma 10.7},
\begin{align*}
L &=(B\ot \mu_H)\xcirc(s\ot H)\xcirc (H\ot\gamma^{-1}\ot S)\xcirc (H\ot c\xcirc \Delta)\xcirc \Delta\\
&=(\gamma^{-1}\ot \mu_H)\xcirc (c\ot S)\xcirc (H\ot c) \xcirc (H\ot\Delta)\xcirc \Delta\\
&=(\gamma^{-1}\ot \mu_H)\xcirc (c\ot S)\xcirc (H\ot c) \xcirc (\Delta\ot H)\xcirc \Delta\\
&=(\gamma^{-1}\ot H)\xcirc c\xcirc (\mu_H\ot H)\xcirc (H\ot S\ot\!H) \xcirc (\Delta\ot H)\xcirc \Delta\\
&=(\gamma^{-1}\ot H)\xcirc c\xcirc (\eta\xcirc\epsilon\ot H)\xcirc \Delta\\
&= \gamma^{-1}\ot \eta,
\end{align*}
we have
$$
\nu\xcirc f = (\mu_B\ot H)\xcirc (B\ot \gamma^{-1}\ot\eta)\xcirc (\mu_B\ot \lambda_r^H)\xcirc (B\ot i\ot H) \xcirc (\gamma\ot s_A) \xcirc(\Delta\ot A) = (f\ot\eta )\xcirc (H\ot\lambda_r^A),
$$
as desired.
\end{proof}

\begin{theorem}\label{equiv entre cleft, H-Galois con base normal e isomorfo a un producto cruzado} Let $H$ be a braided Hopf algebra and let $(i\colon A\hookrightarrow B,s)$ be an $H$-extension. The following assertions are equivalent:

\smallskip

\begin{enumerate}

\item $(i,s)$ is cleft.

\smallskip

\item $(i,s)$ is $H$-Galois with a normal basis.

\smallskip

\item There is a crossed product $A\#_{\rho,\sigma}^{s_A} H$, with convolution invertible cocycle  $\sigma\colon H\ot^c H\to A$, and a right $H$-comodule algebra isomorphism
$$
(B,s)\longrightarrow (A\#_{\rho,\sigma}^{s_A} H,\wh{s_A}),
$$
which is also left $A$-linear.

\smallskip

\end{enumerate}
Furthermore, if $\gamma$ is a cleft map of $(i,s)$ with $\gamma(1_H) = 1_B$, then

\begin{enumerate}[resume]

\smallskip

\item The map $\phi\colon (A\ot H,\wh{s_A})\longrightarrow (B,s)$, defined by $\phi(a\ot h):= i(a)\gamma(h)$, is a normal basis of $B$.

\smallskip

\item The weak action $\rho$ and the cocycle $\sigma$ are given by
\begin{align}
& i\xcirc \rho= \mu_B\xcirc (\mu_B\ot B)\xcirc(\gamma\ot i\ot \gamma^{-1})\xcirc (H\ot s_A)\xcirc(\Delta \ot A)\label{ecua1}
\shortintertext{and}
& i\xcirc \sigma=\mu_B\xcirc (\mu_B\ot \gamma^{-1})\xcirc(\gamma\ot\gamma\ot \mu_H) \xcirc \Delta_{H\ot^c H}.\label{ecua2}
\end{align}
\end{enumerate}
\end{theorem}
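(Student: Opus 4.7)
The plan is to establish the cycle $(3) \Rightarrow (1) \Rightarrow (2) \Rightarrow (3)$, with items (4) and (5) obtained as byproducts of the last two implications. Since the Introduction flags item (5) as the only genuinely new content, I concentrate attention there.

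For $(3) \Rightarrow (1)$, the map $\gamma(h) := 1_A \# h$ into $A\#_{\rho,\sigma}^{s_A} H$ is right $H$-colinear because the coaction on the crossed product is $A \otimes \Delta$, and it is convolution invertible thanks to the convolution invertibility of $\sigma$ together with the antipode $S$ of $H$, in the standard way. For $(1) \Rightarrow (2)$ and item (4): after normalizing $\gamma$ so that $\gamma(1_H) = 1_B$, the map $\phi(a \otimes h) := i(a)\gamma(h)$ is left $A$-linear by construction and right $H$-colinear with respect to $\widehat{s_A}$ because $\gamma$ is colinear; its inverse can be written using $\gamma^{-1}$, and the image of that inverse lies in $A \otimes H$ by the same coinvariance argument used in Lemma \ref{cae en A}. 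This yields the normal basis. The $H$-Galois property then follows by producing an explicit inverse of $\beta_B$ built from $\gamma$ and $\gamma^{-1}$.

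For $(2) \Rightarrow (3)$ and item (5): with $\gamma$ a cleft map satisfying $\gamma(1_H) = 1_B$, define $\rho$ and $\sigma$ by formulas \eqref{ecua1} and \eqref{ecua2}. Lemma \ref{cae en A} is precisely the statement that the right-hand side of \eqref{ecua1} takes values in $i(A)$, so $\rho$ is well defined; a completely parallel colinearity computation (applying $\nu$ to the right-hand side and using the formula for $\nu \circ \gamma^{-1}$ derived in \cite{G-G}*{Lemma 10.7}, exactly as in the proof of Lemma \ref{cae en A}) shows that \eqref{ecua2} also lands in $i(A)$. Next, transport the product of $B$ to $A \otimes H$ via $\phi$: expanding $\phi^{-1}\bigl(\phi(a \otimes h)\phi(a' \otimes h')\bigr)$ by inserting $\gamma(h_{(1)})\gamma^{-1}(h_{(2)}) = 1_B$ between consecutive factors in $i(a)\gamma(h)i(a')\gamma(h')$, one recognizes, after using the definitions of $\chi$ and $\mathcal{F}$, the crossed product multiplication of Definition \ref{def de producto cruzado}. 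Convolution invertibility of $\sigma$ is read off from \eqref{ecua2} combined with the convolution invertibility of $\gamma$.

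The main technical obstacle is verifying that the pair $(\rho, \sigma)$ just defined actually satisfies all the axioms of a weak $s_A$-action together with a normal cocycle that is compatible with $s$ and satisfies the twisted module condition. A brute-force check of the cocycle and twisted module identities is painful because of the interplay of $c$, $s_A$ and $\gamma^{-1}$, so the cleanest route is to bypass it: once the transported product on $A \otimes H$ has been identified with $\mu_{A\#_{\rho,\sigma}^{s_A}H}$, associativity and unitality of $B$ together with the crossed-product criteria of \cite{G-G} force the weak action axioms, the normality, the cocycle identity, and the twisted module condition, while the right $H$-comodule algebra structure of $B$ forces the compatibility of $\sigma$ with $s$. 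This reduces the entire verification to already-established results in the companion paper, leaving only the explicit formulas \eqref{ecua1} and \eqref{ecua2} as the genuine new content.
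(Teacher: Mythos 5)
Your proposed cycle $(3)\Rightarrow(1)\Rightarrow(2)\Rightarrow(3)$ is not actually closed: the argument you offer for ``$(2)\Rightarrow(3)$'' begins ``with $\gamma$ a cleft map satisfying $\gamma(1_H)=1_B$'', i.e.\ it assumes (1), not (2). What you establish is therefore $(3)\Rightarrow(1)$, $(1)\Rightarrow(2)$ and $(1)\Rightarrow(3)$, which gives $(1)\Leftrightarrow(3)$ and $(1)\Rightarrow(2)$ but leaves statement (2) with no implication out of it. The missing step is the genuinely hard direction of this Doi--Takeuchi-type theorem: from a Galois extension with normal basis $\phi$ one must produce a convolution \emph{invertible} colinear map, typically $\gamma(h):=\phi(1_A\ot h)$, whose convolution inverse is constructed from the bijectivity of $\beta_B$. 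That argument appears nowhere in your proposal, so the equivalence of the three items is not proved.

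Two further points. First, your ``bypass'' of the axiom verification is not free of charge: Theorem~\ref{propiedades basicas de productos cruzados1} only gives the direction ``weak action $+$ normal cocycle $+$ twisted module condition $\Rightarrow$ associative and unital''; to run it backwards and extract the axioms from the associativity and unitality of $B$ you need the converse characterization from \cite{G-G}, which you neither quote nor prove, and in any case the measuring axiom (condition~(3) of Remark~\ref{re: weak H-modulo algebra}) and the compatibility of $\sigma$ with $s$ are not consequences of bare associativity of the transported product --- they require separate verification. Second, for comparison: the paper does not re-prove the equivalence at all; it cites \cite{G-G}*{Theorem 10.6} for items (1)--(4) and, for item (5), starts from the formulas $\rho(h\ot a)=(A\ot \epsilon)\xcirc\phi^{-1}(\gamma(h)i(a))$ and $\sigma(h\ot l)=(A\ot\epsilon)\xcirc\phi^{-1}(\gamma(h)\gamma(l))$ already produced in that proof, and shows by an explicit string computation (using Lemma~\ref{cae en A} and the identity $(i\ot H)\xcirc\phi^{-1}(b)=b_{(0)}\gamma^{-1}(b_{(1)})\ot b_{(2)}$) that these coincide with \eqref{ecua1} and \eqref{ecua2}. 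Your single sentence ``one recognizes \dots the crossed product multiplication'' conceals exactly that computation; in the braided setting, moving $\gamma^{-1}$ past $i(a')$ involves $s_A$ and the braid $c$ in $\Delta_{H\ot^c H}$, and this is where the real content of item (5) lies.
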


\begin{proof} The equivalence between the first three items is \cite{G-G}*{Theorem~10.6}, and the fourth item was proved in the proof of that Theorem. It remains to check the last one. By item~(4), the discussion below~\cite{G-G}*{Definition 10.5} and the proof of Theorem 10.6 of~\cite{G-G}, we know that $\phi$ is bijective, that
$$
(i\ot H)\xcirc\phi^{-1}(b) = b_{(0)}\gamma^{-1}(b_{(1)})\ot b_{(2)},
$$
and that the maps $\rho\colon H\ot A\to A$ and $\sigma\colon H \ot H \to A$, are given by
$$
\rho(h\ot a):=(A\ot \epsilon)\xcirc \phi^{-1}(\gamma(h)i(a))\quad\text{and}\quad \sigma(h\ot l):= (A\ot \epsilon)\xcirc \phi^{-1}(\gamma(h)\gamma(l)).
$$
We must check that $\rho$ and $\sigma$ satisfy~\eqref{ecua1} and~\eqref{ecua2}, respectively. Let $f$ be as in Lemma~\ref{cae en A} and let $i^{-1}$ be the compositional inverse of $i\colon A\to i(A)$. Since
\begin{align*}
\mu_B\xcirc (\gamma\ot i) & = \mu_B\xcirc (\mu_B\ot \eta_B\xcirc \epsilon)\xcirc (B\ot i\ot H)\xcirc (\gamma\ot s_A)\xcirc (\Delta\ot A)\\
& = \mu_B\xcirc (B\ot\mu_B)\xcirc (B\ot \gamma^{-1}\ot \gamma)\xcirc (\mu_B\ot \Delta) \xcirc (B\ot i\ot H) \xcirc (\gamma\ot s_A)\xcirc (\Delta\ot A)\\
& = \mu_B\xcirc (\mu_B\ot\mu_B)\xcirc (\gamma\ot i\ot \gamma^{-1}\ot \gamma)\xcirc (H\ot s_A\ot H)\xcirc (\Delta\ot s_A)\xcirc (\Delta\ot A)\\
& = \mu_B\xcirc (f\ot\gamma)\xcirc (H\ot s_A)\xcirc (\Delta\ot A),
\end{align*}
and, by Lemma~\ref{cae en A},
$$
\mu_B\xcirc (f\ot \gamma) = \phi\xcirc (i^{-1}\xcirc f\ot H),
$$
we have
\begin{align*}
i\xcirc \rho &= (i\ot \epsilon)\xcirc \phi^{-1}\xcirc \mu_B\xcirc (\gamma\ot i)\\
& =(i\ot \epsilon)\xcirc \phi^{-1}\xcirc \mu_B\xcirc (f\ot \gamma)\xcirc (H\ot s_A) \xcirc (\Delta\ot A)\\
& =(i\ot \epsilon)\xcirc (i^{-1}\xcirc f\ot H)\xcirc (H\ot s_A) \xcirc (\Delta\ot A)\\
& = \mu_B\xcirc (\mu_B\ot B)\xcirc(\gamma\ot i\ot \gamma^{-1})\xcirc (H\ot s_A)\xcirc(\Delta \ot A).
\end{align*}
Finally,
\begin{align*}
i\xcirc \sigma & = (i\ot \epsilon)\xcirc \phi^{-1} \xcirc \mu_B\xcirc (\gamma\ot \gamma)\\
& =\mu_B\xcirc(B\ot \gamma^{-1})\xcirc \nu \xcirc \mu_B\xcirc (\gamma\ot \gamma)\\
& =\mu_B\xcirc (B\ot \gamma^{-1})\xcirc(\mu_B\ot \mu_H)\xcirc (B\ot s \ot B)\xcirc(\nu\ot \nu)\xcirc (\gamma\ot \gamma)\\
&= \mu_B\xcirc (\mu_B\ot \gamma^{-1})\xcirc(\gamma\ot\gamma\ot \mu_H) \xcirc \Delta_{H\ot^c H},
\end{align*}
as desired.
\end{proof}

\begin{remark} In the proof of Theorem 10.6 of~\cite{G-G} was also shown that $\phi\colon A\#_{\rho,\sigma}^{s_A} H \to B$ is an algebra isomorphism.
\end{remark}

\section{A family of braided Hopf algebras}
Let $G$ be a finite group, $\chi\colon G\to k^{\times}$ a character, $n>1$ in $\mathds{N}$, and $U\in kG$, where~$kG$ denotes the group algebra of $G$ with coefficients in $k$. Set $\mathcal{E}:=(G,\chi,U,n)$ and write $U:=\sum_{g\in G}\lambda_g g$.

\begin{proposition}\label{estructura de algebra de B_D} There exists an associative algebra $B_{\mathcal{E}}$ such that

\begin{itemize}

\smallskip

\item[-] $B_{\mathcal{E}}$ is generated by $G$ and an element $x\in B_{\mathcal{E}}\setminus kG$,

\smallskip

\item[-] $\mathcal{B}:=\{gx^i: g\in G \text{ and } 0\le i < n\}$ is a basis of $B_{\mathcal{E}}$ as a $k$-vector space,

\smallskip

\item[-] the multiplication of elements of $\mathcal{B}$ is given by:
$$
\quad gx^ihx^j:= \begin{cases}
                \chi^i(h)gh x^{i+j} &\text{if $i+j<n$,}\\
                \chi^i(h)gh U x^{i+j-n} &\text{if $i+j\ge n$,}
         \end{cases}
$$

\smallskip

\end{itemize}
iff $\lambda_{hgh^{-1}}=\chi^n(h)\lambda_g$ for all $h,g\in G$, and $\chi(g)=1$ for all $g\in G$ such that $\lambda_{g}\ne 0$.
\end{proposition}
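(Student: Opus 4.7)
The plan is to prove both directions by identifying $B_{\mathcal{E}}$, when it exists, as a quotient of an Ore extension.

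For necessity, suppose $B_{\mathcal{E}}$ with the listed properties is given. The multiplication rule at $i+j=n$ (with $g=h=1$) yields $x^n=U$, and at $i=1$, $j=0$ it gives $xh=\chi(h)hx$ for all $h\in G$; by induction, $hx^i=\chi^{-i}(h)\,x^ih$. Associativity then forces
$$
xU=x\xcdot x^n=x^n\xcdot x=Ux\qquad\text{and}\qquad hU=h\xcdot x^n=\chi^{-n}(h)\,x^nh=\chi^{-n}(h)\,Uh.
$$
Expanding $U=\sum_g\lambda_g g$ and matching coefficients in the basis $\mathcal{B}$, the first identity yields $\chi(g)\lambda_g=\lambda_g$ for every $g\in G$, and the second yields $\lambda_{hgh^{-1}}=\chi^n(h)\lambda_g$ for all $g,h\in G$; these are the two stated conditions.

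For sufficiency, assume both conditions. Let $\alpha\in\Aut_k(kG)$ be the algebra automorphism defined by $\alpha(g):=\chi(g)g$ (well defined because $\chi$ is a character), and let $R:=kG[x;\alpha]$ be the associated skew polynomial ring, so that $R=\bigoplus_{i\ge 0}kG\xcdot x^i$ as a left $kG$-module and $xa=\alpha(a)x$ for $a\in kG$. Set $J:=(x^n-U)R$. The main step is to show $J$ is two-sided; since $R$ is generated as an algebra by $kG\cup\{x\}$, it suffices to verify
$$
h(x^n-U)=(x^n-U)\chi^{-n}(h)h+\bigl(\chi^{-n}(h)Uh-hU\bigr)\quad\text{and}\quad x(x^n-U)=(x^n-U)x+(Ux-xU),
$$
where the parenthesised correction terms vanish, respectively, by Condition~1 and Condition~2.

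Define $B_{\mathcal{E}}:=R/J$. Spanning of $\mathcal{B}=\{gx^i:g\in G,\,0\le i<n\}$ follows from the iterated reduction $x^{n+j}\equiv Ux^j\pmod{J}$. For linear independence, suppose $v:=\sum_{i<n}a_ix^i$ lies in $J$ and write $v=(x^n-U)r$ with $r=\sum_{j=0}^d b_jx^j\in R$; if $r\ne 0$, the coefficient of $x^{n+d}$ in $(x^n-U)r$, expressed in the left $kG$-basis $\{x^i\}_{i\ge 0}$ of $R$, equals $\alpha^n(b_d)\ne 0$, contradicting that $v$ has $x$-degree strictly less than $n$. Hence $r=0$ and $v=0$, so $\mathcal{B}$ is a $k$-basis of $B_{\mathcal{E}}$; the stated multiplication formula is then immediate in $R/J$ from $x^ih=\alpha^i(h)x^i=\chi^i(h)hx^i$ together with $x^{i+j}\equiv Ux^{i+j-n}\pmod{J}$ when $i+j\ge n$.

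The principal obstacle is the two-sidedness of $J$: both hypotheses enter essentially there, with Condition~1 controlling left multiplication by elements of $G$ and Condition~2 controlling left multiplication by $x$. Once $J$ is two-sided, spanning and linear independence in $R/J$ reduce to the standard degree-in-$x$ argument for Ore extensions.
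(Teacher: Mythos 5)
Your proof is correct, but it takes a genuinely different route from the paper. The paper realizes $B_{\mathcal{E}}$ as a general crossed product $kG\# V$ in the sense of Brzezi\'nski, with $V=kx_0\oplus\cdots\oplus kx_{n-1}$, a twisting map $\Phi(x_i\ot g)=\chi^i(g)g\ot x_i$ and a cocycle $\mathcal{F}$; the two conditions on the $\lambda_g$ emerge by evaluating the twisted-module and cocycle compatibility diagrams at $x_1\ot x_{n-1}\ot h$ and $x_1\ot x_{n-1}\ot x_1$, and the basis $\{gx^i\}$ comes for free because the crossed product has underlying space $kG\ot V$. You instead present $B_{\mathcal{E}}$ as $kG[x;\alpha]/(x^n-U)$ with $\alpha(g)=\chi(g)g$, and the two conditions appear precisely as the vanishing of the correction terms $\chi^{-n}(h)Uh-hU$ and $Ux-xU$ that make the right ideal $(x^n-U)R$ two-sided; the price is that you must prove linear independence of $\{gx^i:0\le i<n\}$ by hand, which your leading-coefficient argument (the top term of $(x^n-U)r$ is $\alpha^n(b_d)x^{n+d}$ with $\alpha^n(b_d)\ne 0$) does correctly. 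Your necessity argument, extracting $xU=Ux$ and $hU=\chi^{-n}(h)Uh$ from associativity and comparing coefficients in the basis, is also sound and parallels what the paper's diagram evaluations accomplish. The trade-off is that your route is more elementary and self-contained, while the paper's fits the proposition into the crossed-product formalism it uses throughout and avoids the separate basis verification.
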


\begin{proof} Let $V:=kx_0\oplus\cdots\oplus kx_{n-1}$, where $x_0,\dots, x_{n-1}$ are indeterminate. We will prove the result by showing that there is an associative and unitary algebra $kG\# V$, with underlying vector space $kG\ot V$, whose multiplication map satisfies
\begin{align*}
&(g\ot x_i)(g'\ot x_0)=\chi^i(g')gg'\ot x_i \hspace{7pt} \text{ for all $i$ and all $g\in G$}
\shortintertext{and}
&(1_G\ot x_i)(1_G\ot x_j)= \begin{cases} 1_G\ot x_{i+j} & \text{if $i+j<n$,}\\ U \ot x_{i+j-n} &\text{if $i+j\ge n$,}\end{cases}
\end{align*}
iff

\begin{enumerate}

\smallskip

\item $\chi(g)=1$ for all $g\in G$ such that $\lambda_{g}\ne 0$,

\smallskip

\item $\lambda_{hgh^{-1}}=\chi^n(h)\lambda_g$ for all $h,g\in G$.

\smallskip

\end{enumerate}
By the theory of general crossed products developed in~\cite{Br}, for this it suffices to check that the maps
$$
\Phi\colon V\ot k{G}\longrightarrow kG\ot V\quad\text{and}\quad \mathcal{F}\colon V\ot V \longrightarrow kG\ot V,
$$
given by
$$
\Phi(x_i\ot g):= \chi^i(g)g\ot x_i\quad\text{and}\quad \mathcal{F}(x_i\ot x_j):= \begin{cases} 1_G\ot x_{i+j} & \text{if $i+j<n$,}\\ U \ot x_{i+j-n} &\text{if $i+j\ge n$,}\end{cases}
$$
satisfy
$$
\Phi(x_i\ot 1_G) = 1_G\ot x_i,\quad \Phi(x_0\ot g) = g\ot x_0,\quad \mathcal{F}(x_0\ot x_i)=\mathcal{F}(x_i\ot x_0) = 1_G\ot x_i,
$$
$$
\begin{tikzpicture}[scale=0.6]
\def\mult(#1,#2)[#3]{\draw (#1,#2) .. controls (#1,#2-0.555*#3/2) and (#1+0.445*#3/2,#2-#3/2) .. (#1+#3/2,#2-#3/2) .. controls (#1+1.555*#3/2,#2-#3/2) and (#1+2*#3/2,#2-0.555*#3/2) .. (#1+2*#3/2,#2) (#1+#3/2,#2-#3/2) -- (#1+#3/2,#2-2*#3/2)}
\def\doublemap(#1,#2)[#3]{\draw (#1+0.5,#2-0.5) node [name=doublemapnode,inner xsep=0pt, inner ysep=0pt, minimum height=14pt, minimum width=32pt,shape=rectangle,draw,rounded corners] {$#3$} (#1,#2) .. controls (#1,#2-0.075) .. (doublemapnode) (#1+1,#2) .. controls (#1+1,#2-0.075).. (doublemapnode) (doublemapnode) .. controls (#1,#2-0.925)..(#1,#2-1) (doublemapnode) .. controls (#1+1,#2-0.925).. (#1+1,#2-1)}
\def\doublesinglemap(#1,#2)[#3]{\draw (#1+0.5,#2-0.5) node [name=doublesinglemapnode,inner xsep=0pt, inner ysep=0pt, minimum height=14pt, minimum width=32pt,shape=rectangle,draw,rounded corners] {$#3$} (#1,#2) .. controls (#1,#2-0.075) .. (doublesinglemapnode) (#1+1,#2) .. controls (#1+1,#2-0.075).. (doublesinglemapnode) (doublesinglemapnode)-- (#1+0.5,#2-1)}
\def\twisting(#1,#2)[#3]{\draw (#1+#3,#2) .. controls (#1+#3,#2-0.05*#3) and (#1+0.96*#3,#2-0.15*#3).. (#1+0.9*#3,#2-0.2*#3) (#1,#2-1*#3) .. controls (#1,#2-0.95*#3) and (#1+0.04*#3,#2-0.85*#3).. (#1+0.1*#3,#2-0.8*#3) (#1+0.1*#3,#2-0.8*#3) ..controls (#1+0.25*#3,#2-0.8*#3) and (#1+0.45*#3,#2-0.69*#3) .. (#1+0.50*#3,#2-0.66*#3) (#1+0.1*#3,#2-0.8*#3) ..controls (#1+0.1*#3,#2-0.65*#3) and (#1+0.22*#3,#2-0.54*#3) .. (#1+0.275*#3,#2-0.505*#3) (#1+0.72*#3,#2-0.50*#3) .. controls (#1+0.75*#3,#2-0.47*#3) and (#1+0.9*#3,#2-0.4*#3).. (#1+0.9*#3,#2-0.2*#3) (#1,#2) .. controls (#1,#2-0.05*#3) and (#1+0.04*#3,#2-0.15*#3).. (#1+0.1*#3,#2-0.2*#3) (#1+0.5*#3,#2-0.34*#3) .. controls (#1+0.6*#3,#2-0.27*#3) and (#1+0.65*#3,#2-0.2*#3).. (#1+0.9*#3,#2-0.2*#3) (#1+#3,#2-#3) .. controls (#1+#3,#2-0.95*#3) and (#1+0.96*#3,#2-0.85*#3).. (#1+0.9*#3,#2-0.8*#3) (#1+#3,#2) .. controls (#1+#3,#2-0.05*#3) and (#1+0.96*#3,#2-0.15*#3).. (#1+0.9*#3,#2-0.2*#3) (#1+0.1*#3,#2-0.2*#3) .. controls (#1+0.3*#3,#2-0.2*#3) and (#1+0.46*#3,#2-0.31*#3) .. (#1+0.5*#3,#2-0.34*#3) (#1+0.1*#3,#2-0.2*#3) .. controls (#1+0.1*#3,#2-0.38*#3) and (#1+0.256*#3,#2-0.49*#3) .. (#1+0.275*#3,#2-0.505*#3) (#1+0.50*#3,#2-0.66*#3) .. controls (#1+0.548*#3,#2-0.686*#3) and (#1+0.70*#3,#2-0.8*#3)..(#1+0.9*#3,#2-0.8*#3) (#1+#3,#2-1*#3) .. controls (#1+#3,#2-0.95*#3) and (#1+0.96*#3,#2-0.85*#3).. (#1+0.9*#3,#2-0.8*#3) (#1+0.72*#3,#2-0.50*#3) .. controls (#1+0.80*#3,#2-0.56*#3) and (#1+0.9*#3,#2-0.73*#3)..(#1+0.9*#3,#2-0.8*#3)(#1+0.72*#3,#2-0.50*#3) -- (#1+0.50*#3,#2-0.66*#3) -- (#1+0.275*#3,#2-0.505*#3) -- (#1+0.5*#3,#2-0.34*#3) -- (#1+0.72*#3,#2-0.50*#3)}
\begin{scope}[yshift=-0.5cm]
\node at (0,0.3){$\scriptstyle V$}; \node at (1,0.3){$\scriptstyle kG$}; \node at (2,0.3){$\scriptstyle kG$}; \twisting(0,-0)[1]; \draw (2,0) -- (2,-1); \draw (0,-1) -- (0,-2); \twisting(1,-1)[1]; \mult(0,-2)[1]; \draw (2,-2) -- (1.5,-3);
\end{scope}
\begin{scope}[xshift=0.1cm,yshift=0cm]
\node at (2.5,-1.8){=};
\end{scope}
\begin{scope}[xshift=0.2cm,yshift=-0.75cm]
\node at (3,0.3){$\scriptstyle V$}; \node at (4,0.3){$\scriptstyle kG$}; \node at (5,0.3){$\scriptstyle kG$};\mult(4,0)[1]; \twisting(3,-1)[1.5]; \draw (3,0) -- (3,-1);
\end{scope}
\begin{scope}[xshift=3.2cm,yshift=0cm]
\node at (2.5,-1.8){,};
\end{scope}
\begin{scope}[xshift=7.7cm, yshift=-0.5cm]
\node at (0,0.3){$\scriptstyle V$}; \node at (1,0.3){$\scriptstyle V$}; \node at (2,0.3){$\scriptstyle kG$}; \doublemap(0,0)[\scriptstyle \mathcal{F}]; \draw (2,0) -- (2,-1); \draw (0,-1) -- (0,-2); \twisting(1,-1)[1]; \mult(0,-2)[1]; \draw (2,-2) -- (1.5,-3);
\end{scope}
\begin{scope}[xshift=7.7cm,yshift=0cm]
\node at (2.5,-1.8){=};
\end{scope}
\begin{scope}[xshift=7.8cm]
\node at (3,0.3){$\scriptstyle V$}; \node at (4,0.3){$\scriptstyle V$}; \node at (5,0.3){$\scriptstyle kG$}; \draw (3,0) -- (3,-1);\twisting(4,0)[1]; \twisting(3,-1)[1]; \draw (5,-1) -- (5,-2); \draw (3,-2) -- (3,-3); \doublemap(4,-2)[\scriptstyle \mathcal{F}];\mult(3,-3)[1]; \draw (5,-3) -- (4.5,-4);
\end{scope}
\begin{scope}[xshift=7.7cm,yshift=0cm]
\node at (7.3,-1.7){and};
\end{scope}
\begin{scope}[xshift=17cm, yshift=-0.5cm]
\node at (0,0.3){$\scriptstyle V$}; \node at (1,0.3){$\scriptstyle V$}; \node at (2,0.3){$\scriptstyle V$};\doublemap(0,0)[\scriptstyle \mathcal{F}]; \draw (2,0) -- (2,-1); \draw (0,-1) -- (0,-2); \doublemap(1,-1)[\scriptstyle \mathcal{F}]; \mult(0,-2)[1]; \draw (2,-2) -- (1.5,-3);
\end{scope}
\begin{scope}[xshift=17.4cm,yshift=0cm]
\node at (2.5,-1.8){=};
\end{scope}
\begin{scope}[xshift=17.5cm,yshift=0cm]
\node at (3,0.3){$\scriptstyle V$}; \node at (4,0.3){$\scriptstyle V$}; \node at (5,0.3){$\scriptstyle V$}; \draw (3,0) -- (3,-1);\doublemap(4,0)[\scriptstyle \mathcal{F}]; \twisting(3,-1)[1]; \draw (5,-1) -- (5,-2); \draw (3,-2) -- (3,-3); \doublemap(4,-2)[\scriptstyle \mathcal{F}];\mult(3,-3)[1]; \draw (5,-3) -- (4.5,-4);
\end{scope}
\begin{scope}[xshift=20.cm,yshift=0cm]
\node at (2.5,-1.8){,};
\end{scope}
\end{tikzpicture}
\label{Brtwistmodcond}
$$
where \begin{tikzpicture}[scale=0.25]
\def\twisting(#1,#2)[#3]{\draw (#1+#3,#2) .. controls (#1+#3,#2-0.05*#3) and (#1+0.96*#3,#2-0.15*#3).. (#1+0.9*#3,#2-0.2*#3) (#1,#2-1*#3) .. controls (#1,#2-0.95*#3) and (#1+0.04*#3,#2-0.85*#3).. (#1+0.1*#3,#2-0.8*#3) (#1+0.1*#3,#2-0.8*#3) ..controls (#1+0.25*#3,#2-0.8*#3) and (#1+0.45*#3,#2-0.69*#3) .. (#1+0.50*#3,#2-0.66*#3) (#1+0.1*#3,#2-0.8*#3) ..controls (#1+0.1*#3,#2-0.65*#3) and (#1+0.22*#3,#2-0.54*#3) .. (#1+0.275*#3,#2-0.505*#3) (#1+0.72*#3,#2-0.50*#3) .. controls (#1+0.75*#3,#2-0.47*#3) and (#1+0.9*#3,#2-0.4*#3).. (#1+0.9*#3,#2-0.2*#3) (#1,#2) .. controls (#1,#2-0.05*#3) and (#1+0.04*#3,#2-0.15*#3).. (#1+0.1*#3,#2-0.2*#3) (#1+0.5*#3,#2-0.34*#3) .. controls (#1+0.6*#3,#2-0.27*#3) and (#1+0.65*#3,#2-0.2*#3).. (#1+0.9*#3,#2-0.2*#3) (#1+#3,#2-#3) .. controls (#1+#3,#2-0.95*#3) and (#1+0.96*#3,#2-0.85*#3).. (#1+0.9*#3,#2-0.8*#3) (#1+#3,#2) .. controls (#1+#3,#2-0.05*#3) and (#1+0.96*#3,#2-0.15*#3).. (#1+0.9*#3,#2-0.2*#3) (#1+0.1*#3,#2-0.2*#3) .. controls (#1+0.3*#3,#2-0.2*#3) and (#1+0.46*#3,#2-0.31*#3) .. (#1+0.5*#3,#2-0.34*#3) (#1+0.1*#3,#2-0.2*#3) .. controls (#1+0.1*#3,#2-0.38*#3) and (#1+0.256*#3,#2-0.49*#3) .. (#1+0.275*#3,#2-0.505*#3) (#1+0.50*#3,#2-0.66*#3) .. controls (#1+0.548*#3,#2-0.686*#3) and (#1+0.70*#3,#2-0.8*#3)..(#1+0.9*#3,#2-0.8*#3) (#1+#3,#2-1*#3) .. controls (#1+#3,#2-0.95*#3) and (#1+0.96*#3,#2-0.85*#3).. (#1+0.9*#3,#2-0.8*#3) (#1+0.72*#3,#2-0.50*#3) .. controls (#1+0.80*#3,#2-0.56*#3) and (#1+0.9*#3,#2-0.73*#3)..(#1+0.9*#3,#2-0.8*#3)(#1+0.72*#3,#2-0.50*#3) -- (#1+0.50*#3,#2-0.66*#3) -- (#1+0.275*#3,#2-0.505*#3) -- (#1+0.5*#3,#2-0.34*#3) -- (#1+0.72*#3,#2-0.50*#3)}
\begin{scope}
\twisting(0,0)[1];
\end{scope}
\end{tikzpicture}
stands for $\Phi$, iff conditions~(1) and~(2) are fulfilled.

\smallskip

\noindent By the very definitions of $\Phi$ and $\mathcal{F}$, the first four conditions always hold. Assume that the other ones hold. Evaluating the fifth one in $x_1\ot x_{n-1}\ot h$ we see that
$$
\sum_{g\in G}\lambda_g g h\ot x_0=\sum_{g\in G} \chi^n(h) \lambda_g h g\ot x_0\quad\text{for all $g,h\in G$},
$$
or equivalently,
$$
\lambda_{hgh^{-1}}=\chi^n(h) \lambda_g\quad\text{for all $g,h\in G$,}
$$
and evaluating the sixth one in $x_1\ot x_{n-1}\ot x_1$ we see that
$$
\chi(g)=1\quad\text{for all $g\in G$ with $\lambda_g\ne 0$.}
$$
Conversely, a direct computation proves that if these facts are true, then the equalities in the last two diagrams hold.
\end{proof}

\begin{corollary}\label{si lambda_g ne 0 par un g central, chi^n = 0} If there is an algebra $B_{\mathcal{E}}$ satisfying the conditions required in Propositions~\ref{estructura de algebra de B_D}, and there exists $g$ in the center $\Z G$ of $G$ with $\lambda_{g} \ne 0$, then $\chi^n=1$.
\end{corollary}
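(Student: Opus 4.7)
The plan is to extract the conclusion directly from the first condition listed in Proposition~\ref{estructura de algebra de B_D}, namely
\[
\lambda_{hgh^{-1}} = \chi^n(h)\lambda_g \qquad \text{for all } h,g\in G.
\]
Because the existence of $B_{\mathcal{E}}$ is assumed, this identity is available to us without further work.

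Next I would specialize this identity at the given central element. Fix $g\in \Z G$ with $\lambda_g\ne 0$. For an arbitrary $h\in G$, centrality of $g$ gives $hgh^{-1}=g$, so the identity above collapses to
\[
\lambda_g = \chi^n(h)\,\lambda_g.
\]
Since $\lambda_g\ne 0$, this forces $\chi^n(h)=1$. As $h$ was arbitrary, we conclude $\chi^n=1$ on all of $G$.

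There is essentially no obstacle here: the entire content of the corollary is a direct consequence of plugging a central $g$ into the conjugation condition that Proposition~\ref{estructura de algebra de B_D} already provides. The only thing to double-check is that the hypothesis really does give us access to condition~(1) of the proposition, but this is precisely what it means to say that an algebra $B_{\mathcal{E}}$ of the stated form exists.
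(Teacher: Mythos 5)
Your proof is correct and is exactly the intended argument: the paper gives no explicit proof of this corollary, treating it as immediate from the condition $\lambda_{hgh^{-1}}=\chi^n(h)\lambda_g$ of Proposition~\ref{estructura de algebra de B_D}, which is precisely what you use. (Only a trivial quibble: in the numbered list inside the proposition's proof the conjugation identity is item~(2), not~(1), but since you display the identity explicitly there is no ambiguity.)
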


\begin{remark} It is clear that if there exists, then $B_{\mathcal{E}}$ is a $k$-algebra unitary with unit $1_Gx^0$, that $kG$ is a subalgebra of $B_{\mathcal{E}}$ and that $B_{\mathcal{E}}$ is unique up to isomorphism.
\end{remark}

\begin{remark}\label{gen y red} Using that $B_{\mathcal{E}}$ has dimension $n|G|$ it is easy to see that it is canonically isomorphic to the algebra generated by the group $G$ and the element $x$ subject to the relations $x^n = U$ and $xg = \chi(g)gx$ for all $g\in G$.
\end{remark}

Given $q\in k^{\times}$, let
$$
c_q\colon B_{\mathcal{E}}\ot B_{\mathcal{E}} \longrightarrow B_{\mathcal{E}}\ot B_{\mathcal{E}}
$$
be the $k$-linear map defined by $c_q(gx^i\ot hx^j):= q^{ij}\, hx^j \ot gx^i$. It is easy to check that $c_q$ is a braiding operator that is compatible with the unit of $B_{\mathcal{E}}$. Furthermore,

\begin{itemize}

\smallskip

\item[-] a direct computation shows that $c_q$ is compatible with the multiplication map of $B_{\mathcal{E}}$ iff $U=0$ or $q^n=1$,

\smallskip

\item[-] by Remark~\ref{gen y red} there exists an algebra map $\epsilon\colon B_{\mathcal{E}}\to k$ such that $\epsilon(x)=0$ and $\epsilon(g)=1$ for all $g\in G$ iff $\sum_{g\in G} \lambda_g=0$. Moreover, in this case, $c_q$ is compatible with $\epsilon$.

\end{itemize}

\begin{proposition}\label{estructura de bialgebra de B sub D} Let $\mathcal{E}$ be as at the beginning of this section, $z\!\in\! G$ and $q\!\in\! k^{\times}$. Assume that $B_{\mathcal{E}}$ exists. Then, the algebra $B_{\mathcal{E}}$ is a braided bialgebra with braid $c_q$ and comultiplication map $\Delta$ defined by
\begin{equation}
\Delta(x):=1\ot x+x\ot z \qquad\text{and}\qquad\Delta(g) := g\ot g\quad\text{for $g\in G$}\label{eqa1}
\end{equation}
iff

\begin{enumerate}

\smallskip

\item $\binom{n}{j}_{q\chi(z)} = 0$ for all $0<j<n$,

\smallskip

\item $z\in \Z G$ and $U=\lambda(z^n-1_G)$ for some $\lambda\in k$, where $\lambda=0$ if $z^n\ne 1_G$ and $q^n\ne 1$.

\end{enumerate}
\end{proposition}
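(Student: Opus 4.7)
The plan is to invoke the remark following Definition~\ref{def: braided bialgebra}: it suffices to show that $c_q$ is compatible with both the algebra and coalgebra structures of $B_\mathcal{E}$ and that $\Delta\colon B_\mathcal{E}\to B_\mathcal{E}\ot_{c_q}B_\mathcal{E}$ and $\epsilon\colon B_\mathcal{E}\to k$ are algebra morphisms. Compatibility of $c_q$ with the unit of $B_\mathcal{E}$ always holds, and compatibility with the multiplication is equivalent to $U=0$ or $q^n=1$ by the observation recorded immediately before the statement. The existence of $\epsilon$ as an algebra map amounts to $\sum_g\lambda_g=0$, which will be automatic once condition~(2) is in place, since then $\sum_g\lambda_g=\lambda(1-1)=0$.

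Being an algebra morphism into the twisted tensor product, $\Delta$ must respect the defining relations of $B_\mathcal{E}$. The group relations cause no issue, since $\Delta(g)=g\ot g$. For the relation $xg=\chi(g)gx$, a direct computation in $B_\mathcal{E}\ot_{c_q}B_\mathcal{E}$ using the explicit form of $c_q$ gives
$$
\Delta(x)\Delta(g)=\chi(g)(g\ot gx+gx\ot zg)\quad\text{and}\quad\chi(g)\Delta(g)\Delta(x)=\chi(g)(g\ot gx+gx\ot gz),
$$
so equality for every $g\in G$ forces $z\in\Z G$. The heart of the argument is the relation $x^n=U$. Writing $y:=1\ot x$ and $w:=x\ot z$ in $B_\mathcal{E}\ot_{c_q}B_\mathcal{E}$, a short computation using the defining rule of $c_q$ and the identity $xz=\chi(z)zx$ yields $yw=(q\chi(z))wy$, so the $q$-binomial formula~\eqref{eq12} applies with parameter $q\chi(z)$. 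Inductive calculations then give $w^j=x^j\ot z^j$, $y^{n-j}=1\ot x^{n-j}$, and $w^jy^{n-j}=x^j\ot z^jx^{n-j}$, hence
$$
\Delta(x)^n = 1\ot U + U\ot z^n + \sum_{0<j<n}\binom{n}{j}_{q\chi(z)}\, x^j\ot z^jx^{n-j}.
$$

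Comparing this expression with $\Delta(U)=\sum_{g\in G}\lambda_g\, g\ot g$ in the basis $\mathcal{B}\ot\mathcal{B}$ of $B_\mathcal{E}\ot B_\mathcal{E}$, the summands $x^j\ot z^jx^{n-j}$ for $0<j<n$ are linearly independent from every element of $kG\ot kG$, which forces $\binom{n}{j}_{q\chi(z)}=0$ for $0<j<n$; this is condition~(1). What remains is the identity $\Delta(U)=1\ot U+U\ot z^n$ in $kG\ot kG$, and a direct coefficient comparison in the basis $\{g\ot h\}_{g,h\in G}$ shows this is equivalent to $U=\lambda(z^n-1_G)$ for some $\lambda\in k$. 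Combining with the earlier requirement $U=0$ or $q^n=1$ produces precisely the restriction that $\lambda=0$ when $z^n\ne 1_G$ and $q^n\ne 1$, completing condition~(2). The remaining verifications---coassociativity and counitality on the generators $G$ and $x$, and compatibility of $c_q$ with $\Delta$ and $\epsilon$, both of which reduce to straightforward checks on simple tensors involving the algebra generators---are routine. The main technical hurdle is the $q$-binomial expansion of $\Delta(x)^n$ and the independence argument used to split the equality $\Delta(x^n)=\Delta(U)$ into conditions~(1) and~(2).
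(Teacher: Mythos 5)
Your proposal is correct and follows essentially the same route as the paper: reduce to checking that $\Delta$ preserves the defining relations of $B_{\mathcal{E}}$ in $B_{\mathcal{E}}\ot_{c_q}B_{\mathcal{E}}$, derive $z\in\Z G$ from $xg=\chi(g)gx$, and apply the $q$-binomial formula with parameter $q\chi(z)$ to $\Delta(x)^n$, splitting the comparison with $\Delta(U)$ into conditions (1) and (2) exactly as in the paper. The only cosmetic difference is that you foreground the compatibility of $c_q$ with the multiplication (needed for the twisted tensor product to be an algebra) at the outset, whereas the paper relies on the discussion preceding the proposition; the substance is identical.
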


\begin{proof} Since $B_{\mathcal{E}}$ is generated by the group $G$ and the element $x$ subject to the relations $x^n = U$ and $xg = \chi(g)gx$ for all $g\in G$, there exists an algebra map $\Delta\colon B_{\mathcal{E}} \longrightarrow B_{\mathcal{E}} \ot_{c_q} B_{\mathcal{E}}$ such that~\eqref{eqa1} is satisfied iff the equalities
\begin{align}
& (h\ot h)(g\ot g) = hg\ot hg,\notag\\
&(1\ot x+x\ot z)(g\ot g) = \chi(g) (g\ot g)(1\ot x+x\ot z)\notag
\shortintertext{and}
& (1\ot x+x\ot z)^n = \sum_{l\in G} \lambda_l\, l\ot l\label{latercera}
\end{align}
hold in $B_{\mathcal{E}} \ot_{c_q} B_{\mathcal{E}}$ for all $h,g\!\in\! G$. The first equality is trivial, while the second one is e\-qui\-valent to
\begin{equation*}
\chi(g)(g\ot gx + gx\ot zg) = \chi(g)(g\ot gx+gx\ot gz)\qquad\text{for all $g\in G$,}
\end{equation*}
and so it is fulfilled iff $z$ is in the center of $G$. In order to deal with the last one, we note that, in $B_{\mathcal{E}} \ot_{c_q} B_{\mathcal{E}}$,
$$
(1\ot x)(x\ot z)= q\chi(z)\, x\ot zx = q\chi(z)(x\ot z)(1\ot x),
$$
and so, by formula~\eqref{eq12},
\begin{equation*}
(1\ot x+x\ot z)^n = \sum_{j=0}^n \binom{n}{j}_{q\chi(z)}(x\ot z)^j(1\ot x)^{n-j} = \sum_{j=0}^n \binom{n}{j}_{q\chi(z)} x^j\ot z^jx^{n-j}.
\end{equation*}
Hence, equality~\eqref{latercera} holds iff
$$
\sum_{j=0}^n \binom{n}{j}_{q\chi(z)} x^j\ot z^jx^{n-j} = \sum_{l\in G} \lambda_l\, l\ot l,
$$
which is clearly equivalent to
\begin{align*}
& \binom{n}{j}_{q\chi(z)} = 0\qquad\text{for all $0<j<n$,}
\shortintertext{and}
& \sum_{l\in G} \lambda_l\, l\ot l = 1\ot x^n+x^n\ot z^n = 1\ot U + U\ot z^n = \sum_{l\in G} 1\ot \lambda_l l + \sum_{l\in G} \lambda_l l \ot z^n.
\end{align*}
If $z^n = 1_G$ this happens iff $\lambda_l = 0$ for all $l\in G$, while if $z^n \ne 1_G$, this happens iff $\lambda_l = 0$ for all $l\ne 1_G,z^n$ and if $\lambda_{z^n}=-\lambda_{1_G}$. By the way, this computation shows that if $\Delta$ exists, then the augmentation $\epsilon$ introduced above, is well defined. Moreover, by formula~\eqref{eq12},
\begin{equation}
\Delta(gx^i):= \sum_{j=0}^i \binom{i}{j}_{q\chi(z)} (g\ot g)(x\ot z)^j(1\ot x)^{i-j}= \sum_{j=0}^i \binom{i}{j}_{q\chi(z)} gx^j\ot gz^j x^{i-j}\label{comultiplication}
\end{equation}
for all $g\in G$ and $i\ge 0$. Using this it is easy to see that $c_q$ is compatible with $\Delta$. Since we already know that $c_q$ is compatible with $1_{B_{\mathcal{E}}}$, the multiplication map of $B_{\mathcal{E}}$ and $\epsilon$, in order to finish the proof we only must check that $\Delta$ is coassociative and that $\epsilon$ is its counit. But, since $c_q$ is compatible with $\Delta$ and $\Delta$ is an algebra map, it suffices to verify these facts on $x$ and $g\in G$, which is trivial.
\end{proof}

\begin{remark} Let $\mathcal{E}$ be as at the beginning of this section. If $U = \lambda(z^n-1_G)$ with $z\in \Z G$, $z^n\ne 1_G$ and $\lambda\in k^{\times}$, then the hypothesis of Proposition~\ref{estructura de algebra de B_D} are equivalent to $\chi^n = 1$, while if $U=0$, then the hypothesis of Proposition~\ref{estructura de algebra de B_D} are automatically satisfied.
\end{remark}

\begin{remark} It is easy to see that $\binom{n}{1}_{q\chi(z)} = 0$ implies $(q\chi(z))^n = 1$ and that if $q\chi(z)$ is an $n$-th primitive root of unit, then $\binom{n}{j}_{q\chi(z)} = 0$ for all $0<j<n$.
\end{remark}

\begin{corollary}\label{algebras de Hopf trenzadas de K-R} Each data $\mathcal{D}=(G,\chi,z,\lambda,q)$ consisting of:

\begin{itemize}

\smallskip

\item[-] a finite group $G$,

\smallskip

\item[-] a character $\chi$ of $G$ with values in $k$,

\smallskip

\item[-] a central element $z$ of $G$,

\smallskip

\item[-] elements $q\in k^{\times}$ and $\lambda\in k$,

\smallskip

\end{itemize}
such that

\begin{itemize}

\smallskip

\item[-] $q\chi(z)$ is a root of $1$ of order $n$ greater than $1$,

\smallskip

\item[-] if $\lambda(z^n-1_G)\ne 0$, then $\chi^n=1$,

\end{itemize}
has associated a braided Hopf algebra $H_{\mathcal{D}}$. As an algebra, $H_{\mathcal{D}}$ is generated by the group $G$ and the element $x$ subject to the relations $x^n=\lambda(z^n-1_G)$ and $xg=\chi(g)gx$ for all $g\in G$, the coalgebra structure of $H_{\mathcal{D}}$ is determined by
\begin{align*}
&\Delta(g):=g\ot g\,\,\text{ for $g\in G$,}&& \Delta(x):=1\ot x + x\ot z,\\
&\epsilon(g):=1\,\,\text{ for $g\in G$,} &&\epsilon(x):=0,
\end{align*}
the braid $c_q$ of $H_{\mathcal{D}}$ is defined by
\begin{equation}
c_q(gx^i\ot hx^j):= q^{ij}\,hx^j \ot gx^i,\label{def braid}
\end{equation}
and its antipode is given by
\begin{equation}
S(gx^i) := (-1)^i(q\chi(z))^{\frac{i(i-1)}{2}}x^iz^{-i}g^{-1}.\label{eqa4}
\end{equation}
Furthermore, as a vector space $H_{\mathcal{D}}$ has basis
$$
\{gx^i:g \in G\text{ and } 0 \le i < n\},
$$
and consequently, $\dim \bigl(H_{\mathcal{D}}\bigr) = n|G|$.
\end{corollary}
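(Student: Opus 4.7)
The plan is to realize $H_{\mathcal{D}}$ as a particular case of the braided bialgebras $B_{\mathcal{E}}$ of Propositions~\ref{estructura de algebra de B_D} and~\ref{estructura de bialgebra de B sub D}. Set $\mathcal{E}:=(G,\chi,U,n)$ with $U:=\lambda(z^n-1_G)$; in the expansion $U=\sum_g\lambda_g g$ only the coefficients at $1_G$ and $z^n$ can be nonzero, namely $\lambda_{1_G}=-\lambda$ and $\lambda_{z^n}=\lambda$ (both vanish when $U=0$). With this choice everything except the antipode formula follows from the two propositions, modulo a handful of automatic compatibility checks.

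First I would verify the hypotheses of Proposition~\ref{estructura de algebra de B_D}. If $U=0$ they hold vacuously. If $U\ne 0$, the standing assumption of the corollary forces $\chi^n=1$, hence $\chi^n(h)\lambda_g=\lambda_g$; centrality of $z$ gives $hgh^{-1}=g$ on the support of $U$, so $\lambda_{hgh^{-1}}=\chi^n(h)\lambda_g$; and the second condition reduces to $\chi(z^n)=\chi(z)^n=1$, again a consequence of $\chi^n=1$. This yields $H_{\mathcal{D}}=B_{\mathcal{E}}$ with the asserted presentation, basis $\{gx^i:g\in G,\ 0\le i<n\}$ and dimension $n|G|$. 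Next I would check the hypotheses of Proposition~\ref{estructura de bialgebra de B sub D} along with the compatibilities of $c_q$ with the multiplication and with $\epsilon$ noted in the text just before that proposition. The identities $\binom{n}{j}_{q\chi(z)}=0$ for $0<j<n$ follow from $q\chi(z)$ being a primitive $n$-th root of unity by the remark preceding the corollary; centrality of $z$ and the shape of $U$ are built in; and $\sum_g\lambda_g=0$ is obvious, so $\epsilon$ is well defined. Either $U=0$ or $U\ne 0$, and in the latter case $\chi^n=1$ together with $(q\chi(z))^n=1$ forces $q^n=1$; in either scenario $c_q$ is compatible with the multiplication. Hence $H_{\mathcal{D}}$ becomes a braided bialgebra with braid $c_q$ and with the prescribed comultiplication and counit.

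The main technical step is checking that~\eqref{eqa4} defines the antipode. I would put $S(gx^i):=(-1)^i(q\chi(z))^{\binom{i}{2}}x^iz^{-i}g^{-1}$ on the basis and verify $S*\ide=\eta\xcirc\epsilon=\ide*S$. Substituting the explicit formula
$$
\Delta(gx^i) = \sum_{j=0}^i \binom{i}{j}_{q\chi(z)}\, gx^j \ot gz^j x^{i-j}
$$
from~\eqref{comultiplication} and simplifying the resulting products via $xg=\chi(g)gx$, $xz=\chi(z)zx$, the centrality of $z$, and $g^{-1}g=1$, both convolutions collapse, for every $0\le i<n$, to a scalar multiple of a single basis element with coefficient
$$
\sum_{j=0}^i (-1)^j (q\chi(z))^{\binom{j}{2}} \binom{i}{j}_{q\chi(z)}.
$$
The identity $\sum_{j=0}^i (-1)^j q^{\binom{j}{2}} \binom{i}{j}_q=\delta_{i,0}$, a standard consequence of the $q$-Pascal recurrence recalled in Section~1, finishes the job; for $\ide*S$ one additionally invokes the symmetry $\binom{i}{j}_q=\binom{i}{i-j}_q$ to reduce to the same sum. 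The one delicate point is the sign and $q$-power bookkeeping while moving $x$-powers past group elements; the basis, dimension and algebra presentation statements are inherited from Proposition~\ref{estructura de algebra de B_D}.
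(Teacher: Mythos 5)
Your construction of the braided bialgebra is the same as the paper's: both take $\mathcal{E}=(G,\chi,\lambda(z^n-1_G),n)$, run it through Propositions~\ref{estructura de algebra de B_D} and~\ref{estructura de bialgebra de B sub D}, and use that $\chi^n=1$ together with $(q\chi(z))^n=1$ forces $q^n=1$ when $\lambda(z^n-1_G)\ne 0$; you are in fact more explicit than the paper about checking the hypotheses of Proposition~\ref{estructura de algebra de B_D} on the support $\{1_G,z^n\}$ of $U$, a point the paper silently absorbs. Where you genuinely diverge is the antipode. The paper asserts the braided anti-multiplicativity $S\xcirc\mu=\mu\xcirc(S\ot S)\xcirc c_q$ (the verification is suppressed in the printed text) and then reduces the antipode axiom to the generators $x$ and $g$, where it is immediate. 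You instead verify $S*\ide=\eta\xcirc\epsilon=\ide*S$ directly on the whole basis using~\eqref{comultiplication} and the alternating sum $\sum_{j=0}^i(-1)^j(q\chi(z))^{j(j-1)/2}\binom{i}{j}_{q\chi(z)}=\delta_{i,0}$, which is exactly identity~\eqref{formula} that the paper later imports from Doi--Takeuchi to invert the cleft map. Your route is more elementary and self-contained, at the cost of the bookkeeping you flag on the $\ide*S$ side (there $gx^j\,S(gz^jx^{i-j})$ collapses to $\chi(z)^{-i^2}\chi(g)^{-i}z^{-i}x^i$ times the summand, and one needs $\binom{i}{j}_{q\chi(z)}=\binom{i}{i-j}_{q\chi(z)}$); note the $S*\ide$ side is even cleaner than you suggest, since $x^jz^{-j}g^{-1}\cdot gz^jx^{i-j}=x^i$ with no extra scalars. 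The paper's route buys the anti-multiplicativity of $S$ as a structural by-product, but requires its own unprinted check that $S$ respects the defining relations of the braided-opposite algebra. Both arguments are correct.
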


\begin{proof} Let $\mathcal{E}:=\bigl(G,\chi,\lambda(z^n-1_G),n\bigr)$ and let $B_{\mathcal{E}}$ be the algebra obtained applying Proposition~\ref{estructura de algebra de B_D}. Now note that if $\lambda(z^n-1_G)$, then $\chi^n=1$ and so $q^n = q^n\chi(z)^n = 1$. Hence, we can apply Pro\-po\-si\-tion~\ref{estructura de bialgebra de B sub D}, which implies that $B_{\mathcal{E}}$ has a braided bialgebra structure with comultiplication map, counit and braid as in its statement. Let $H_{\mathcal{D}}$ denote this bialgebra. It remains to check that the map $S$ given by~\eqref{eqa4} is the antipode of $H_{\mathcal{D}}$. Since
$$
S\xcirc \mu(gx^i\ot hx^j) = \mu \xcirc (S\ot S)\xcirc c_q(gx^i\ot hx^j),
$$
for this it suffices to verify that
$$
S(x) + xS(z)=S(1)x+S(x)z=0\qquad\text{and}\qquad S(g)g=gS(g)=1\quad\text{for all $g\in G$,}
$$
which is evident.
\end{proof}

\begin{remark} If $\lambda(z^n-1_G)= 0$, then we can assume without lost of generality (and we do it), that $\lambda = 0$.
\end{remark}

\begin{remark} Assume that $n>1$. The previous corollary also holds if the hypothesis that $q\chi(z)$ is a root of $1$ of order $n$ is replaced by $\binom{n}{j}_{q\chi(z)} = 0$ for all $0<j<n$. However, from now on we will consider that $q\chi(z)$ is a root of $1$ of order $n$.
\end{remark}

\section{Right $\bm{H_{\mathcal{D}}}$-comodule algebras}\label{H_D comodule algebras}
Let $G$ be a group, $V$ be a $k$-vector space and $s\colon k[G]\ot V\to V \ot k[G]$ a $k$-linear map. Evidently, there is a unique family of maps $(\alpha_x^y\colon V\to V)_{x,y\in G}$, such that
$$
s(x\ot v)=\sum_{y\in G} \alpha_x^y(v)\ot y.
$$

\begin{proposition}\label{primer resultado sobre estructuras trenzadas de grupos} The pair $(V,s)$ is a left $k[G]$-space iff $s$ is a bijective map and the fo\-llo\-wing conditions hold:

\begin{enumerate}

\smallskip

\item $(\alpha_x^y)_{y\in G}$ is a complete family of orthogonal idempotents, for all $x\in G$,

\smallskip

\item $\alpha_1^1=\ide$,

\smallskip

\item $\alpha_{xy}^z=\sum_{uw=z}\alpha_x^u\xcirc \alpha_y^w$, for all $x,y,z\in G$.

\end{enumerate}
\end{proposition}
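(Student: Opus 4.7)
The plan is to test each defining axiom of a left $k[G]$-space on simple tensors $x\ot v$ with $x\in G$, $v\in V$, and to translate the resulting equalities in $V\ot k[G]$ (or $V\ot k[G]\ot k[G]$) into pointwise conditions on the operators $\alpha_x^y$ by comparing coefficients on the basis $\{y:y\in G\}$. Since $G$ generates $k[G]$ as an algebra, Remark~\ref{basta verificar sobre generadores} allows us to verify the coalgebra compatibilities only on such tensors, so the computation is entirely explicit.

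For the forward implication, I would unpack the compatibilities one by one. Compatibility with the unit forces $s(1\ot v)=v\ot 1$, hence $\alpha_1^1=\ide$ (item~(2)) and $\alpha_1^y=0$ for $y\ne 1$. Compatibility with $\ep$ yields $\sum_{y}\alpha_x^y=\ide$ for every $x\in G$ (the completeness part of~(1)). Compatibility with $\De$, after expanding
$$
(V\ot \De)\xcirc s(x\ot v)=\sum_{y}\alpha_x^y(v)\ot y\ot y \quad \text{and} \quad (s\ot H)\xcirc(H\ot s)\xcirc(\De\ot V)(x\ot v)=\sum_{y,z}\alpha_x^z\xcirc\alpha_x^y(v)\ot z\ot y,
$$
gives $\alpha_x^z\xcirc\alpha_x^y=\delta_{y,z}\alpha_x^y$, which is the orthogonality in~(1). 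Compatibility with $\mu$ expands to $\sum_z\alpha_{xy}^z(v)\ot z=\sum_{u,w}\alpha_x^u\xcirc\alpha_y^w(v)\ot uw$, and regrouping by $z=uw$ gives~(3). Bijectivity of $s$ is built into the definition of a left transposition.

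For the converse I would read each of these derivations backwards. Given (1)--(3), the relation $\alpha_1^y=0$ for $y\ne 1$ follows from (2) together with the completeness and orthogonality of $(\alpha_1^y)_{y\in G}$ in~(1), so unit compatibility holds; counit and comultiplication compatibilities reverse at once from~(1); multiplication compatibility reverses from~(3); and the compatibility with the (flip) braid of $k[G]$ is checked on generators by the same bookkeeping. The main obstacle is that bijectivity of $s$ is not forced by (1)--(3) alone, which is precisely why it is imposed as an independent hypothesis in the statement: in the forward direction it is free, and in the converse it is assumed, so the whole proof reduces to the explicit translations sketched above.
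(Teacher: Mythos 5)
Your forward direction is sound: the translations of unit, counit, multiplication and comultiplication compatibility into conditions (2), completeness, (3) and orthogonal idempotency are exactly the right computations (the paper itself only says ``mimic \cite{G-G}*{Proposition 4.10}'', so writing them out explicitly is appropriate), and you are right that bijectivity is an independent hypothesis, not a consequence of (1)--(3). The gap is in the converse, at the compatibility with the braid. For $H=k[G]$ the braid $c$ is the flip, and evaluating $(s\ot H)\xcirc (H\ot s)\xcirc (c\ot V)$ and $(V\ot c)\xcirc (s\ot H)\xcirc (H\ot s)$ on $x\ot y\ot v$ and comparing coefficients of $w\ot u$ shows that braid compatibility is \emph{equivalent} to
$$
\alpha_x^u\xcirc \alpha_y^w=\alpha_y^w\xcirc \alpha_x^u\qquad\text{for all $x,y,u,w\in G$.}
$$
This commutation relation is not among (1)--(3), and it is not ``the same bookkeeping'': for fixed $x$ the operators $\alpha_x^u$ commute among themselves because they are orthogonal idempotents, but for $x\ne y$ nothing in (1)--(3) asserts directly that $\alpha_x^u$ and $\alpha_y^w$ commute. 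Deriving it is the one genuinely nontrivial step of the proposition --- it is essentially the statement that the decompositions $V=\bigoplus_u \operatorname{Im}\alpha_x^u$, for varying $x$, admit a common refinement, which is what feeds Proposition~\ref{segundo resultado sobre estructuras trenzadas de grupos}(3) and Theorem~\ref{tercer resultado sobre estructuras trenzadas de grupos}.

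To close the gap one has to extract the commutation from (1)--(3). A workable route: applying (3) to $yy^{-1}=1$ and using (1)--(2) gives $\alpha_{y^{-1}}^{w^{-1}}=\alpha_y^w$; applying (3) to $x\cdot y$ and to $(xy)\cdot y^{-1}$ and multiplying by suitable idempotents of the families then gives
$$
\alpha_x^u\xcirc \alpha_y^w=\alpha_{xy}^{uw}\xcirc \alpha_y^w=\alpha_x^u\xcirc \alpha_{xy}^{uw},
$$
and these identities (together with their analogues with the roles of $x,u$ and $y,w$ interchanged, and the orthogonality of the family $(\alpha_{xy}^z)_z$) are what one combines to obtain $\alpha_x^u\xcirc\alpha_y^w=\alpha_y^w\xcirc\alpha_x^u$. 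Until some such argument is supplied, the converse implication is not proved. A smaller point: Remark~\ref{basta verificar sobre generadores}, which you invoke to reduce the coalgebra compatibilities to generators, itself presupposes compatibility with the braid, so the braid compatibility must be established \emph{before} that reduction; for $k[G]$ this causes no real trouble, since $G$ is a linear basis and checking the comultiplication condition on $x\ot v$ with $x\in G$ already covers all of $k[G]\ot V$ by linearity, but the logical order should be stated.
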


\begin{proof} Mimic the proof of \cite{G-G}*{Proposition 4.10}.
\end{proof}

For $x,y\in G$, let $V_x^y:=\{v\in V:s(x\ot v)=v\ot y\}$.

\begin{proposition}\label{segundo resultado sobre estructuras trenzadas de grupos} The pair $(V,s)$ is a left $k[G]$-space iff:

\begin{enumerate}

\smallskip

\item $\bigoplus_{z\in G} V_x^z = V = \bigoplus_{z\in G} V_z^x$, for all $x\in G$,

\smallskip

\item $V_1^1=V$,

\smallskip

\item $V_{xy}^z=\bigoplus_{uw=z} V_x^u\cap V_y^w$, for all $x,y,z\in G$.

\end{enumerate}
\end{proposition}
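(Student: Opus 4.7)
The plan is to derive this proposition from Proposition~\ref{primer resultado sobre estructuras trenzadas de grupos} by translating between the description via the idempotents $(\alpha_x^y)$ and the description via the subspaces $(V_x^y)$. The key dictionary is that, under conditions~(1)--(3) of Proposition~\ref{primer resultado sobre estructuras trenzadas de grupos}, one has $V_x^y = \ima(\alpha_x^y) = \{v \in V : \alpha_x^y(v) = v\}$: linear independence of $\{g\}_{g \in G}$ in $k[G]$ turns the equation $s(x \ot v) = \sum_z \alpha_x^z(v) \ot z = v \ot y$ into the system $\alpha_x^y(v) = v$ and $\alpha_x^z(v) = 0$ for $z \neq y$, which by orthogonality is equivalent to $v \in \ima(\alpha_x^y)$.

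With this dictionary the forward direction is largely routine. The first decomposition in~(1) is the splitting associated with the complete orthogonal family $(\alpha_x^z)_z$, and~(2) is just $\alpha_1^1 = \ide$. For the second decomposition $V = \bigoplus_z V_z^x$, I would use bijectivity of $s$ and introduce the maps $\beta_x^z$ defined by $s^{-1}(v \ot x) = \sum_z z \ot \beta_x^z(v)$; compatibilities of $s^{-1}$ derived from $s \xcirc s^{-1} = \ide = s^{-1} \xcirc s$ together with the counit and comultiplication compatibilities of $s$ show that $(\beta_x^z)_z$ is a complete family of orthogonal idempotents with $\ima(\beta_x^z) = V_z^x$. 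For~(3), the inclusion $\supseteq$ is obtained by applying $\alpha_{xy}^{z'} = \sum_{u'w' = z'} \alpha_x^{u'} \alpha_y^{w'}$ to $v \in V_x^u \cap V_y^w$ with $uw = z$ and using orthogonality of the $(\alpha_x^{u'})$ and the $(\alpha_y^{w'})$ to collapse the sum; the opposite inclusion together with directness follows by comparing the decomposition $V = \bigoplus_z V_{xy}^z$ with the simultaneous refinement $V = \bigoplus_{u,w} (V_x^u \cap V_y^w)$ of the two decompositions in~(1).

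For the converse direction, I would define $\alpha_x^y$ to be the projector of $V = \bigoplus_y V_x^y$ onto $V_x^y$ and set $s(x \ot v) := \sum_y \alpha_x^y(v) \ot y$; condition~(1) yields both the completeness of $(\alpha_x^y)_y$ and the bijectivity of $s$ (the second decomposition provides its inverse via $s^{-1}(v \ot y) := \sum_z z \ot p_z(v)$, where $p_z$ is the projector of $V = \bigoplus_z V_z^y$ onto $V_z^y$), condition~(2) gives $\alpha_1^1 = \ide$, and condition~(3) translates back into the multiplicativity relation of Proposition~\ref{primer resultado sobre estructuras trenzadas de grupos}~(3) by evaluating both sides on each summand $V_x^u \cap V_y^w$. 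The main obstacle I anticipate is the careful bookkeeping in handling the interaction between the two decompositions $V = \bigoplus_u V_x^u$ and $V = \bigoplus_w V_y^w$ when verifying condition~(3), as well as in deriving orthogonality for $(\beta_x^z)_z$, which requires extracting a comultiplicative compatibility for $s^{-1}$ from the corresponding compatibility of $s$.
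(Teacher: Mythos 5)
The paper itself only says ``mimic \cite{G-G}*{Propositions 4.11 and 4.13}'', so there is no line-by-line proof to compare against; your route through Proposition~\ref{primer resultado sobre estructuras trenzadas de grupos} and the dictionary $V_x^y=\ima(\alpha_x^y)=\{v:\alpha_x^y(v)=v\}$ is clearly the intended one, and most of your outline (the dictionary itself, items~(1) and~(2), the treatment of the second decomposition via $s^{-1}$, the inclusion $\supseteq$ in~(3), and the converse, where summing condition~(3) over $z$ \emph{gives} you $V=\bigoplus_{u,w}V_x^u\cap V_y^w$ as a hypothesis) is correct.

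There is, however, one genuine gap in the forward direction of~(3): you invoke ``the simultaneous refinement $V=\bigoplus_{u,w}(V_x^u\cap V_y^w)$ of the two decompositions in~(1)'' as if it were automatic. It is not: two direct-sum decompositions of a vector space need not be refined by their pairwise intersections (take $V=k^2$ decomposed in two ways as sums of two lines, all four lines distinct; every intersection is $0$). The refinement holds exactly when the associated projectors commute, i.e.\ when $\alpha_x^u\xcirc\alpha_y^w=\alpha_y^w\xcirc\alpha_x^u$ for all $u,w$, and this commutativity is not a formal consequence of conditions~(1)--(3) of Proposition~\ref{primer resultado sobre estructuras trenzadas de grupos} alone; it is precisely what the compatibility of $s$ with the braid of $k[G]$ (the flip) gives when evaluated on $x\ot y\ot v$ and coefficients of $w\ot u$ are compared. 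Since in the forward direction you are assuming $(V,s)$ is a left $k[G]$-space, this hypothesis is available and should be used explicitly: with it, the maps $\alpha_x^u\xcirc\alpha_y^w$ form a complete family of orthogonal idempotents with $\ima(\alpha_x^u\xcirc\alpha_y^w)=V_x^u\cap V_y^w$, which yields both the refinement $V=\bigoplus_{u,w}(V_x^u\cap V_y^w)$ and, via $\alpha_{xy}^z=\sum_{uw=z}\alpha_x^u\xcirc\alpha_y^w$, the missing inclusion $V_{xy}^z\subseteq\bigoplus_{uw=z}V_x^u\cap V_y^w$. The same commutativity (read off from $V=\bigoplus_{u,w}(V_x^u\cap V_y^w)$) is what verifies the braid compatibility of the map $s$ you construct in the converse, a point your outline also leaves implicit.
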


\begin{proof} Mimic the proof of \cite{G-G}*{Propositions 4.11 and 4.13}.
\end{proof}

\begin{theorem}\label{tercer resultado sobre estructuras trenzadas de grupos} If $G$ is a finitely generated group, then each left $k[G]$-space $(V,s)$ determines an $\Aut(G)$-gradation
$$
V=\bigoplus_{\zeta\in \Aut(G)} V_{\zeta}
$$
on $V$, by
$$
V_{\zeta}:=\bigcap_{x\in G} V_x^{\zeta(x)}=\{v\in V: s(x\ot v)=v\ot \zeta(x) \text{ for all $x\in G$}\}.
$$
Moreover, the correspondence that each left $k[G]$-space $(V,s)$, with underlying vector space $V$, assigns the $\Aut(G)$-gradation of $V$ obtained as above, is bijective.
\end{theorem}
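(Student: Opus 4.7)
The plan is to invoke Proposition~\ref{segundo resultado sobre estructuras trenzadas de grupos} and work with the subspaces $V_x^y$. A preliminary observation: applying condition~(3) with $(x,y,z) = (x,x^{-1},1)$ gives $V = V_1^1 = \bigoplus_{uw=1} V_x^u \cap V_{x^{-1}}^w$, and comparing with $V = \bigoplus_u V_x^u$ forces $V_x^u \subseteq V_{x^{-1}}^{u^{-1}}$; by symmetry $V_x^u = V_{x^{-1}}^{u^{-1}}$. This permits passing freely between generators and their inverses later on.

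Next, fix a finite generating set $S = \{g_1,\dots,g_r\}$ of $G$. Iterating condition~(1) yields
\begin{equation*}
V = \bigoplus_{\alpha\colon S\to G} W_\alpha, \qquad W_\alpha := \bigcap_{i=1}^r V_{g_i}^{\alpha(g_i)}.
\end{equation*}
To each $\alpha$ with $W_\alpha \ne 0$ I would attach a unique $\wt\alpha \in \Aut(G)$ such that $W_\alpha = V_{\wt\alpha}$. Choose any $0 \ne w \in W_\alpha$; by the direct sum $V = \bigoplus_z V_y^z$ there is a unique $\wt\alpha(y) \in G$ with $w \in V_y^{\wt\alpha(y)}$. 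Using condition~(3), the inclusion $w \in V_{y_1}^{\wt\alpha(y_1)} \cap V_{y_2}^{\wt\alpha(y_2)} \subseteq V_{y_1 y_2}^{\wt\alpha(y_1)\wt\alpha(y_2)}$ together with directness of $V = \bigoplus_z V_{y_1 y_2}^z$ forces $\wt\alpha(y_1 y_2) = \wt\alpha(y_1)\wt\alpha(y_2)$, while $\wt\alpha(1) = 1$ since $V_1^1 = V$; hence $\wt\alpha$ is a group homomorphism agreeing with $\alpha$ on $S$, and so it is independent of the choice of $w$. The parallel decomposition $V = \bigoplus_z V_z^y$ produces analogously a homomorphism $\beta\colon G \to G$ characterized by $w \in V_{\beta(y)}^y$. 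Since $w$ belongs to both $V_y^{\wt\alpha(y)}$ and $V_{\beta(\wt\alpha(y))}^{\wt\alpha(y)}$, directness of $V = \bigoplus_z V_z^{\wt\alpha(y)}$ forces $\beta(\wt\alpha(y)) = y$; the symmetric argument yields $\wt\alpha \xcirc \beta = \ide$, so $\wt\alpha \in \Aut(G)$. The inclusions $W_\alpha \subseteq V_{\wt\alpha} \subseteq W_{\wt\alpha|_S} = W_\alpha$ follow: the first uses the preliminary symmetry to obtain $W_\alpha \subseteq V_y^{\wt\alpha(y)}$ for every $y \in G$ (writing $y$ as a word in $S \cup S^{-1}$ and iterating condition~(3)), and the second is immediate.

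Reindexing the refinement by $\wt\alpha$ gives $V = \bigoplus_{\zeta \in \Aut(G)} V_\zeta$, with zero summands for the $\zeta$ not arising this way. For the bijectivity of the correspondence, the inverse map sends an $\Aut(G)$-gradation $V = \bigoplus_\zeta V_\zeta$ to the transposition defined by $s(x \ot v) := v \ot \zeta(x)$ for $v \in V_\zeta$, extended $k$-linearly; a direct verification via Proposition~\ref{segundo resultado sobre estructuras trenzadas de grupos} shows $(V,s)$ is a left $k[G]$-space, and the two constructions are mutually inverse by construction. The main obstacle is showing that $\wt\alpha$ is an \emph{automorphism} rather than merely a homomorphism: multiplicativity of $\wt\alpha$ follows from condition~(3) together with the right-sided direct sum $V = \bigoplus_z V_y^z$ alone, but producing an inverse for $\wt\alpha$ genuinely requires the parallel left-sided direct sum $V = \bigoplus_z V_z^y$ to supply the homomorphism $\beta$. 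Finite generation of $G$ enters only to guarantee that the refinement indexed by $\alpha\colon S \to G$ is itself a direct-sum refinement.
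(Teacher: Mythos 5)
The paper gives no self-contained proof here (it reads ``Mimic the proof of \cite{G-G}*{Theorem 4.14}''), so your argument has to stand on its own, and it has a gap sitting exactly at the crux of the theorem: the construction of $\beta$. For $\wt\alpha$ you can legitimately assert that every nonzero $w\in W_\alpha$ lies in a single summand of $V=\bigoplus_z V_y^z$ for \emph{every} $y\in G$, because condition~(3) of Proposition~\ref{segundo resultado sobre estructuras trenzadas de grupos} is multiplicative in the \emph{lower} index, so writing $y$ as a word in $S\cup S^{-1}$ gives $W_\alpha\subseteq V_y^{\wt\alpha(y)}$. There is no analogue of condition~(3) for the upper index, so nothing in your argument shows that $w$ lies in any $V_z^y$ at all; ``analogously'' does not apply. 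Worse, the existence of $\beta(y)$ is \emph{equivalent} to $y\in\ima\wt\alpha$: since $w\in V_z^{\wt\alpha(z)}$ and $V_z^{\wt\alpha(z)}\cap V_z^y=0$ for $y\ne\wt\alpha(z)$, one has $w\notin V_z^y$ for every $z$ whenever $y\notin\ima\wt\alpha$. So your construction of $\beta$ presupposes the surjectivity it is meant to establish. The statement is still true, and the left-sided half of condition~(1) is indeed what saves it, but it must be used differently: each idempotent $\alpha_z^u$ acts on $V_{\wt\alpha}$ as $\delta_{u,\wt\alpha(z)}\ide$, hence $V_z^u=\bigoplus_{\wt\alpha(z)=u}V_{\wt\alpha}$, and then $V=\bigoplus_z V_z^u$ forces every $\wt\alpha$ with $V_{\wt\alpha}\ne0$ to have $u$ in its image; equivalently, $s$ carries $kG\ot V_{\wt\alpha}$ into $V_{\wt\alpha}\ot k[\ima\wt\alpha]$, and bijectivity of $s$ on the whole space forces $k[\ima\wt\alpha]=kG$.

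A second, lesser, gap: the common refinement $V=\bigoplus_{\alpha\colon S\to G}W_\alpha$ does not follow by ``iterating condition~(1)''; two direct-sum decompositions of a vector space need not admit their pairwise intersections as a common refinement. It is true here, but because conditions~(1) and~(3) applied to the product $g_ig_j$ give $V=\bigoplus_{u,w}V_{g_i}^u\cap V_{g_j}^w$, whence the idempotents $\alpha_{g_i}^u$ and $\alpha_{g_j}^w$ commute, and the pairwise commuting complete orthogonal families then yield the simultaneous decomposition. Both gaps are repairable, and the rest of your argument (the identity $V_x^u=V_{x^{-1}}^{u^{-1}}$, multiplicativity and injectivity of $\wt\alpha$, the sandwich $W_\alpha\subseteq V_{\wt\alpha}\subseteq W_\alpha$, and the inverse correspondence) is correct.
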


\begin{proof} Mimic the proof of~\cite{G-G}*{Theorem 4.14}.
\end{proof}

In the sequel $\mathcal{D}:=(G,\chi,z,\lambda,q)$ and $H_{\mathcal{D}}$ are as in Corollary~\ref{algebras de Hopf trenzadas de K-R} and we will freely use the notations and properties established there. Furthermore, to abbreviate expressions we set $p:=\chi(z)$. We now begin with the study of the right $H_{\mathcal{D}}$-braided comodule algebras. We let $\Aut_{\chi,z}(G)$ denote the subgroup of $\Aut(G)$ consisting of all the automorphism $\phi$ such that $\phi(z)=z$ and $\chi\xcirc \phi=\chi$.

\begin{proposition}\label{estructuras trenzadas de HsubD (primer resultado)} If $(p,q)\ne (1,-1)$, then for all left $H_{\mathcal{D}}$-space $(V,s)$ it is true that
\begin{align}
& s(kG\ot V)=V\ot kG,\nonumber\\
& s(z\ot v)=v\ot z\quad\text{for all $v\in V$,}\nonumber
\shortintertext{and there exists $\alpha\in \Aut(V)$ such that}
&s(x\ot v)=\alpha(v)\ot x\quad\text{for all $v\in V$.}\label{eqq1}
\end{align}
\end{proposition}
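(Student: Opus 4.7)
The strategy is to expand $s$ in the basis $\{g x^i:g\in G,\,0\le i<n\}$ of $H_{\mathcal{D}}$ and pit the three compatibility axioms (algebra, coalgebra, braid) against each other. For each $g\in G$ I write $s(g\ot v)=\sum_{h,i}\alpha^g_{h,i}(v)\ot h x^i$ and $s(x\ot v)=\sum_{h,i}\alpha^x_{h,i}(v)\ot h x^i$; the goal is to show that all these components vanish outside the desired support.

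The main step is $s(kG\ot V)\subseteq V\ot kG$. Applying coalgebra compatibility at a grouplike $g\in G$ and expanding $\Delta(h x^i)$ via~\eqref{comultiplication} gives
\[
s\bigl(g\ot\alpha^g_{h,i}(v)\bigr)=\sum_{m\ge 0}\binom{i+m}{m}_{qp}\alpha^g_{hz^{-m},\,i+m}(v)\ot hz^{-m}x^m
\]
for $i+m<n$. At $i=n-1$ the sum collapses, so $\ima\alpha^g_{h,n-1}$ is contained in the ``$(g,h)$-eigenspace'' $\{w:s(g\ot w)=w\ot h\}$, yielding $\alpha^g_{h',i'}\xcirc\alpha^g_{h,n-1}=\delta_{(h',i'),(h,0)}\alpha^g_{h,n-1}$. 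Combining this with the braid-compatibility identity
\[
\alpha^{g'}_{h',i'}\xcirc\alpha^g_{h,i}=q^{i\,i'}\,\alpha^g_{h,i}\xcirc\alpha^{g'}_{h',i'}
\]
(obtained by applying $(s\ot H)\xcirc(H\ot s)\xcirc(c_q\ot V)=(V\ot c_q)\xcirc(s\ot H)\xcirc(H\ot s)$ at $g\ot g'\ot v$) and with the algebra compatibility $s(xg\ot v)=\chi(g)\,s(gx\ot v)$, a downward induction on $i$ from $n-1$ to $1$ kills every component $\alpha^g_{h,i}$ with $i\ge 1$. The nonvanishing of $\binom{i+m}{m}_{qp}$ for $i+m<n$, which follows from $qp$ having exact order $n$, is used at each step of the induction. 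Once $s(kG\ot V)\subseteq V\ot kG$, the restriction endows $V$ with a left $kG$-braided structure (Proposition~\ref{segundo resultado sobre estructuras trenzadas de grupos}) and Theorem~\ref{tercer resultado sobre estructuras trenzadas de grupos} provides an $\Aut(G)$-gradation; the algebra relation $xz=pzx$ then forces the automorphism associated with $z$ to act trivially, giving $s(z\ot v)=v\ot z$.

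For the third conclusion, write $s(x\ot v)=\sum_i a_i\ot h_i$ with the $a_i$ linearly independent. Coalgebra compatibility at $x$, together with $s(1\ot a_i)=a_i\ot 1$ and the already-established $s(z\ot a_i)=a_i\ot z$, forces each $h_i$ to be a $(1,z)$-skew primitive of $H_{\mathcal{D}}$; since $\binom{i}{j}_{qp}\ne 0$ for $0<j<i<n$, this skew-primitive subspace is $kx\oplus k(1-z)$. Hence $s(x\ot v)=\alpha(v)\ot x+\beta(v)\ot(1-z)$. If $p\ne 1$ (and $z\ne 1$, else $1-z=0$ and there is nothing to prove), the identity $s(xz\ot v)=p\,s(zx\ot v)$ reduces to $(1-p)\beta(v)\ot(z-z^2)=0$, giving $\beta=0$. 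If $p=1$, then $q\ne -1$ by hypothesis and $q^2\ne 1$ ($q$ being a primitive $n$-th root with $n\ge 3$); the braid compatibility at $x\ot x\ot v$ yields $\alpha\beta=q\beta\alpha$, $\beta\alpha=q\alpha\beta$ and $(q-1)\beta^2=0$, whence $\alpha\beta=\beta\alpha=0=\beta^2$. Injectivity of $s$ gives $\ker\alpha\cap\ker\beta=\{0\}$, and since $\ima\beta\subseteq\ker\alpha\cap\ker\beta$ we conclude $\beta=0$. Finally, bijectivity of $s$ on the $x$-layer of $H_{\mathcal{D}}\ot V$ forces $\alpha\in\Aut(V)$.

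The main obstacle is the first step: the recursion mixes several $x$-degrees through the Gauss binomials, and the downward induction must be organized carefully so that the braid- and algebra-compatibility relations can be iteratively applied. The hypothesis $(p,q)\ne(1,-1)$ enters only at the very end, where it guarantees that at least one of the scalars $1-p$ and $1-q^2$ is nonzero.
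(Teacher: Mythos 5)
Your later steps are essentially sound and close to the paper's: the identification of the $(1,z)$-skew-primitive space of $H_{\mathcal{D}}$ as $kx\oplus k(1-z)$, the braid computation at $x\ot x\ot v$ giving $q\beta\xcirc\alpha=\alpha\xcirc\beta$, $q\alpha\xcirc\beta=\beta\xcirc\alpha$ and $(q-1)\beta^2=0$ and hence $\beta=0$ when $q^2\ne 1$, and the $xz$ versus $zx$ computation killing $\beta$ when $p\ne1$ are all correct (the last is a small variant of the paper's route, which instead disposes of the case $p\ne 1$ at the outset via compatibility with $S^2$). The problem is that the central step, $s(kG\ot V)\subseteq V\ot kG$, is asserted rather than proved, and the ``downward induction on $i$ from $n-1$ to $1$'' does not close with the tools you list. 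The $\Delta$-composition relation together with the braid identity $\alpha^{g'}_{h',i'}\xcirc\alpha^g_{h,i}=q^{ii'}\,\alpha^g_{h,i}\xcirc\alpha^{g'}_{h',i'}$ does kill $\alpha^g_{h,i}$ whenever $z^i\ne 1_G$ (take $i'=0$, $h'=h$, $g'=g$), but when $z^i=1_G$ -- in particular for every $i$ when $z=1_G$ -- these relations at best show the degree-one components are nilpotent. This is exactly where the paper has to work: it uses compatibility with $S^2$ to get the constraint $q^{j(j-1)-i(i-1)}=p^{j-i}$ (settling $p\ne 1$ immediately), the counit to obtain the complete family of orthogonal idempotents $(\alpha^g_{h,0})_h$, the multiplication compatibility applied to $g^{-1}g=1_G$ for $z\ne 1_G$, and, for $z=1_G$, it must first pin down $s(x\ot -)=\alpha(\,\cdot\,)\ot x$ via $\Prim(H_{\mathcal{D}})=kx$ and only then exploit $xg=\chi(g)gx$ together with the braid to force the degree-one components to vanish. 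Your single mention of $s(xg\ot v)=\chi(g)s(gx\ot v)$ gestures at this, but at that stage of your argument $s(x\ot-)$ is completely undetermined, so the relation cannot yet be used; the order of your steps has to be reorganized, and the missing case is not a routine one.

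A second, concrete error: $s(z\ot v)=v\ot z$ does not follow from the algebra relation $xz=pzx$. Even granting $s(x\ot v)=\alpha(v)\ot x$, that relation only yields $\chi(\zeta(z))=\chi(z)$ for the automorphisms $\zeta$ occurring in the gradation, not $\zeta(z)=z$. The fact really comes from the compatibility of $s$ with $\Delta(x)=1\ot x+x\ot z$ (in the paper it is item (4) of the list of claims, obtained from the injectivity of the $x$-component of $s(x\ot-)$); alternatively one can get it from your skew-primitive argument plus injectivity of $s$, but as written that argument already assumes $s(z\ot v)=v\ot z$, so there is a circularity. Finally, the closing remark that $(p,q)\ne(1,-1)$ ``enters only at the very end'' is not accurate: the hypothesis is already needed to establish $s(kG\ot V)\subseteq V\ot kG$ in the case $p=1$.
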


\begin{proof} Write
$$
s(gx^i\ot v) = \sum_{\substack{h\in G\\ 0\le j<n}} \beta^{g,i}_{h,j}(v) \ot hx^j.
$$
Since $S^2(gx^i)=q^{i(i-1)}p^{-i}gx^i$, we have
\begin{align*}
q^{i(i-1)}p^{-i} \sum_{\substack{h\in G\\ 0\le j<n}} \beta^{g,i}_{h,j}(v) \ot hx^j &=q^{i(i-1)}p^{-i} s(gx^i\ot v)\\
&= s\xcirc (S^2\ot V)(gx^i\ot v)\\
&= (V\ot S^2)\xcirc s(gx^i\ot v)\\
&= \sum_{\substack{h\in G\\ 0\le j<n}} \beta^{g,i}_{h,j}(v) \ot S^2(hx^j)\\
&= \sum_{\substack{h\in G\\ 0\le j<n}} q^{j(j-1)}p^{-j}\beta^{g,i}_{h,j}(v) \ot hx^j,
\end{align*}
and consequently,
\begin{equation}\label{equu1}
\beta^{g,i}_{h,j}\ne 0 \Longrightarrow q^{j(j-1)-i(i-1)} = p^{j-i}.
\end{equation}
Using now that $s$ is compatible with $\Delta$, we obtain that
\begin{equation}
\begin{aligned}
\sum_{\substack{h\in G\\ 0\le i<n}}\sum_{j=0}^i\binom{i}{j}_{qp}\beta^{g,0}_{h,i}(v)\ot hx^j\ot hz^jx^{i-j} &= (V\ot \Delta)\xcirc s(g\ot v)\\
&= (s\ot H_{\mathcal{D}}) \xcirc (H_{\mathcal{D}}\ot s)\xcirc (\Delta\ot V)(g\ot v)\\
&= \adjustlimits\sum_{\substack{h\in G\\ 0\le i<n}} \sum_{\substack{h'\in G\\ 0\le i'<n}} \beta^{g,0}_{h,i} \xcirc \beta^{g,0}_{h',i'}(v) \ot hx^i\ot h'x^{i'}.
\end{aligned}\label{eqq2}
\end{equation}
Hence,
\begin{equation}
\beta_{h,i}^{g,0}\xcirc \beta_{h',i'}^{g,0} = \begin{cases} \binom{i+i'}{i}_{qp} \beta_{h,i+i'}^{g,0} &\text{if $h' = hz^i$ and $i+i' < n$,}\\ 0 &\text{otherwise.} \end{cases}\label{ee1}
\end{equation}
Combining this with~\eqref{equu1} we obtain that
$$
\beta_{h,i}^{g,0}\ne 0\Longrightarrow \beta_{h,j}^{g,0}\ne 0\text{ for all $j\le i$ } \Longrightarrow q^{j(j-1)} = p^j \text{ for $j\le i$.}
$$
Consequently, if $\beta_{h,i}^{g,0}\ne 0$ for some $g\in G$ and $i\ge 1$, then $p = q^0 = 1$. Hence if $p\ne 1$, then $\beta_{h,i}^{g,0}=0$ for all $g\in G$ and $i\ge 1$. Assume that $p = 1$. If $\beta_{h,i}^{g,0}\ne 0$ for some $g\in G$ and $i\ge 2$, then $q^2 = p^2 = 1$. But this is impossible, since it implies that $n:=\ord(qp)=\ord(q)\le 2$, which contradicts that $i<n$. Therefore
\begin{equation}
s(g\ot v) = \begin{cases}\sum_{h\in G}\beta^{g,0}_{h,0}(v)\ot h &\text{if $p\ne 1$,}\\ \sum_{h\in G}\beta^{g,0}_{h,0}(v)\ot h + \sum_{h\in G}\beta^{g,0}_{h,1}(v)\ot hx &\text{if $p = 1$.}\end{cases}\label{ee2}
\end{equation}
On the other hand, due to $s$ is compatible with the counit of $H_{\mathcal{D}}$, we get
$$
\sum_{h\in G} \beta_{h,0}^{g,0}=\ide\quad\text{for all $g\in G$,}
$$
which, combined with the particular case of~\eqref{ee1} obtained by taken $i=i'=0$, shows that
\begin{equation}
\bigl(\beta_{h,0}^{g,0}\bigr)_{h\in G}\,\text{ is a complete family of orthogonal idempotents for all $g\in G$.}\label{ee3}
\end{equation}
Equality~\eqref{ee2} shows that if $p\ne 1$, then $s(kG\ot V) \subseteq V\ot kG$. Assume now that $p =1$ and $q\ne -1$ (which implies $n>2$). Using that $s$ is compatible with the multiplication map of $H_{\mathcal{D}}$ we get that
\begin{equation}
\begin{aligned}
v\ot 1&= s(g^{-1}g\ot v)\\
&=(v\ot\mu)\xcirc (s\ot H_{\mathcal{D}})\xcirc (H_{\mathcal{D}}\ot s)(g^{-1}\ot g\ot v)\\
& = \sum_{h\in G}\sum_{l\in G}\beta^{g^{-1},0}_{h,0}\xcirc\beta_{1,0}^{g,0}(v)\ot hl + \sum_{h\in G}\sum_{l\in G}\chi(l)\beta_{h,1}^{g^{-1},0}\xcirc\beta_{l,0}^{g,0}(v)\ot hlx \\
& + \sum_{h\in G}\sum_{l\in G}\beta_{h,0}^{g^{-1},0}\xcirc\beta_{l,1}^{g,0}(v)\ot hlx + \sum_{h\in G}\sum_{l\in G}\chi(l)\beta_{h,1}^{g^{-1},0}\xcirc\beta_{l,1}^{g,0}(v)\ot hl x^2.
\end{aligned}\label{eqq3}
\end{equation}
Consequently,
$$
\sum_{h\in G}\beta_{h^{-1},0}^{g^{-1},0}\xcirc\beta_{g,0}^{h,0} = \ide_V,
$$
which by~\eqref{ee3} implies that
$$
\beta_{g^{-1},0}^{h^{-1},0}(v)=v\quad\text{for all $v\in \ima\bigl(\beta_{h,0}^{g,0} \bigr)$ and $h,g\in G$.}
$$
Since $\bigl(\beta_{g,0}^{h,0}\bigr)_{h\in G}$ and $\bigl(\beta^{g^{-1},0}_{h,0} \bigr)_{h\in G}$ are complete families of orthogonal idempotents, from this it follows that
$$
\beta^{g^{-1},0})_{h^{-1},0}=\beta^{g,0}_{h,0}\quad\text{for all $h,g\in G$.}
$$
Combining this with~\eqref{eqq3}, we conclude that
\begin{equation}
\begin{aligned}
0 &=\sum_{h\in G}\sum_{l\in G}\chi(l)\beta_{h,1}^{g^{-1},0}\xcirc\beta^{g,0}_{l,0}(v)\ot hlx + \sum_{h\in G}\sum_{l\in G}\beta^{g^{-1},0}_{h,0}\xcirc\beta_{l,1}^{g,0}(v)\ot hlx\\
&= \sum_{h\in G}\sum_{l\in G}\chi(l)\beta_{h,1}^{g^{-1},0}\xcirc\beta^{g^{-1},0}_{ l^{-1},0}(v)\ot hlx + \sum_{h\in G}\sum_{l\in G}\beta^{g,0}_{h^{-1},0}\xcirc\beta_{l,1}^{g,0}(v) \ot hlx\\
& = \sum_{h\in G}\chi(z^{-1}h^{-1})\beta_{h,1}^{g^{-1},0}(v)\ot z^{-1}x + \sum_{h\in G} \beta_{h,1}^{g,0}(v)\ot x,
\end{aligned}\label{ecua3}
\end{equation}
where the last equality follows from the fact that by~\eqref{ee1}
\begin{equation}
\beta_{h,1}^{g,0}\xcirc \beta_{h',0}^{g,0} = \begin{cases}\beta_{h,1}^{g,0} &\text{if $h'=hz$,}\\ 0 &\text{otherwise,}\end{cases}\quad\text{and} \quad\beta_{h,0}^{g,0} \xcirc\beta_{h',1}^{g,0} = \begin{cases}\beta_{h,1}^{g,0} &\text{if $h' = h$,}\\ 0 & \text{otherwise.}\end{cases}
\label{ee4}
\end{equation}
Note that by~\eqref{ee3} and the second equality in~\eqref{ee4}, the images of the maps $\beta_{h,1}^{g,0}$ are in direct sum, for each $g\in G$. Hence, from~\eqref{ecua3} it follows that if $z\ne 1$, then $\beta_{h,1}^{g,0}=0$ for all $g,h\in G$, which by equality~\eqref{ee2} implies that $s(kG\ot V) \subseteq V\ot kG$. We now assume additionally that $z=1$. Then $\Prim(H_{\mathcal{D}})= kx$ and so, by~\cite{G-G}*{Proposition 4.4} there exists an automorphism $\alpha$ of $V$ such that $s(x\ot v)=\alpha(v)\ot x$ for all $v\in V$. Furthermore, by the compatibility of $s$ with $c_q$,
\begin{align*}
\sum_{h\in G} \beta_{h,0}^{g,0}\xcirc \alpha(v)\ot x\ot h & + q \sum_{h\in G} \beta_{h,1}^{g,0}\xcirc \alpha(v)\ot x\ot hx \\
&= (V\ot c_q)\xcirc (s\ot H_{\mathcal{D}})\xcirc (H_{\mathcal{D}}\ot s)(g\ot x\ot v)\\
&= (s\ot H_{\mathcal{D}})\xcirc (H_{\mathcal{D}}\ot s)\xcirc (c_q \ot V)(g\ot x\ot v)\\
&= \sum_{h\in G}\alpha \xcirc \beta_{h,0}^{g,0}(v)\ot x\ot h  + \sum_{h\in G} \alpha\xcirc \beta_{h,1}^{g,0} (v)\ot x\ot hx
\end{align*}
for all $g\in G$, and therefore
\begin{equation}
\alpha\xcirc \beta_{h,0}^{g,0}= \beta_{h,0}^{g,0}\xcirc \alpha\quad\text{and}\quad\alpha \xcirc \beta_{h,1}^{g,0}= q\beta_{h,1}^{g,0}\xcirc \alpha\quad \text{for all $h,g\in G$.} \label{eqq7}
\end{equation}
Using now that $s$ is compatible with the multiplication map of $H_\mathcal{D}$, we obtain that
\begin{align*}
\chi(g)\sum_{h\in G}\beta_{h,0}^{g,0}\xcirc\alpha(v)\ot hx & + \chi(g)\sum_{h\in G} \beta_{h,1}^{g,0}\xcirc \alpha(v)\ot hx^2 \\
&=(V\ot\mu)\xcirc(s\ot H_{\mathcal{D}})\xcirc(H_{\mathcal{D}}\ot s)(\chi(g)g\ot x\ot v)\\
&=(s\ot H_{\mathcal{D}})\xcirc(H_{\mathcal{D}}\ot s)\xcirc(\mu\ot V)(\chi(g)g\ot x\ot v)\\
&=(s\ot H_{\mathcal{D}})\xcirc (H_{\mathcal{D}}\ot s)\xcirc (\mu \ot V)(x\ot g\ot v)\\
&=(V\ot \mu)\xcirc (s\ot H_{\mathcal{D}})\xcirc (H_{\mathcal{D}}\ot s)(x\ot g\ot v)\\
&=\sum_{h\in G}\chi(h)\,\alpha\xcirc\beta_{h,0}^{g,0}(v)\ot hx + \sum_{h\in G}\chi(h)\, \alpha\xcirc \beta_{h,1}^{g,0}(v)\ot hx^2
\end{align*}
for all $g\in G$, which combined with~\eqref{eqq7} gives
\begin{align}
&\chi(g)\,\beta_{h,0}^{g,0}\xcirc\alpha = \chi(h)\,\alpha\xcirc\beta_{h,0}^{g,0} = \chi(h)\,\beta_{h,0}^{g,0}\xcirc\alpha\label{eqq8}
\shortintertext{and}
& \chi(g)\,\beta_{h,1}^{g,0}\xcirc \alpha = \chi(h)\,\alpha\xcirc\beta_{h,1}^{g,0} = \chi(h)q\,\beta_{h,1}^{g,0}\xcirc\alpha\label{eqq9}
\end{align}
for all $g,h\in G$. Since $\alpha$ is bijective, from~\eqref{eqq8} it follows that if $\beta_{h,0}^{g,0}\ne 0$, then $\chi(g)=\chi(h)$. Combining this with~\eqref{ee4}, we see that $\beta_{h,1}^{g,0}\ne 0 \Rightarrow \beta_{h,0}^{g,0}\ne 0\Rightarrow \chi(g)=\chi(h)$. Therefore, from~\eqref{eqq9} it follows that if there exist $g,h\in G$ such that $\beta_{h,1}^{g,0}\ne 0$, then $q=1$, which is false. So, $\beta_{h,1}^{g,0}= 0$ for all $g,h\in G$. This concludes the proof that $s(kG\ot V)\subseteq V\ot kG$. But then a similar computation with $s$ replaced by $s^{-1}$ shows that $s(kG\ot V)\supseteq V \ot kG$, and so the equality holds.

\smallskip

We now return to the general case and we claim that
\begin{enumerate}

\smallskip

\item $\beta_{h,j}^{1,1}= 0$ for all $h\in G$ and $j\ge 2$,

\smallskip

\item $\beta_{h,1}^{1,1}= 0$ for $h\ne 1_G$,

\smallskip

\item $\beta_{1,1}^{1,1}$ is bijective,

\smallskip

\item $\beta_{z,0}^{z,0} = \ide$ and $\beta_{h,0}^{z,0} = 0$ for $h\ne z$,

\smallskip

\item $\beta_{h,0}^{1,1} = 0$ for $h\notin\{1_G,z\}$,

\smallskip

\item If $z\ne 1$, then $\beta_{z,0}^{1,1} = -\beta_{1,0}^{1,1}$ while if $z = 1_G$, then $\beta_{1,0}^{1,1} = 0$,

\smallskip

\end{enumerate}
In fact, $s(kG\ot V) \subseteq V\ot kG$ means that $\beta^{g,0}_{h,j}=0$ for all $g,h\in G$ and $j>0$. Hence, by the compatibility of $s$ with $\Delta$,
\begin{equation}
\begin{aligned}
\sum_{\substack{h\in G\\ 0\le i<n}}\sum_{j=0}^i\binom{i}{j}_{qp}\beta^{1,1}_{h,i}(v)\ot hx^j \ot hz^jx^{i-j} & = (V\ot \Delta)\xcirc s (x\ot v)\\
& = (s\ot H_{\mathcal{D}})\xcirc (H_{\mathcal{D}}\ot s)\xcirc (\Delta\ot V)(x\ot v)\\
& = \sum_{\substack{h\in G\\ 0\le i<n}} \beta^{1,1}_{h,i}(v)\ot 1\ot hx^i\\
& + \sum_{\substack{h,l\in G\\ 0\le i<n}}\beta^{1,1}_{l,i}\xcirc \beta^{z,0}_{h,0}(v)\ot lx^i\ot h.
\end{aligned}\label{eqq5}
\end{equation}
This implies that items~(1) and (2) are true and that

\begin{itemize}

\smallskip

\item[(8)] $\beta_{1,1}^{1,1}=\beta_{1,1}^{1,1}\xcirc \beta_{z,0}^{z,0}$ and $\beta_{1,1}^{1,1}\xcirc \beta_{h,0}^{z,0} = 0$ for all $h\in G\setminus \{z\}$,

\smallskip

\item[(9)] $\beta_{h,0}^{1,1}=\beta_{h,0}^{1,1}\xcirc \beta_{h,0}^{z,0}$ and $\beta_{h,0}^{1,1}=-\beta_{1,0}^{1,1}\xcirc \beta_{h,0}^{z,0}$ for all $h \in G\setminus\{1_G\}$,

\smallskip

\item[(10)] $\beta_{h,0}^{1,1}\xcirc \beta_{1,0}^{z,0} = 0$ for all $h\in G$.

\smallskip

\end{itemize}
By items~(1) and~(2) and condition~\eqref{equu1},
\begin{equation}
s(x\ot v) = \begin{cases} \beta_{1,1}^{1,1}(v)\ot x &\text{if $p \ne 1$,}\\ \beta_{1,1}^{1,1}(v)\ot x + \sum_{h\in G} \beta_{h,0}^{1,1}(v)\ot h &\text{if $p = 1$.} \end{cases}\label{ecua4}
\end{equation}
This immediately implies that $\beta_{1,1}^{1,1}$ is injective. In fact, if $\beta_{1,1}^{1,1}(v) = 0$, then $s(x\ot v)\in s(kG\ot V)$, and so $v=0$ since $s\colon H_{\mathcal{D}}\ot V\to V\ot H_{\mathcal{D}}$ is injective. Item~(4) follows from item~(8) and the injectivity of $\beta_{1,1}^{1,1}$. Hence, by item~(9), we have $\beta_{h,0}^{1,1}=\beta_{h,0}^{1,1}\xcirc \beta_{h,0}^{z,0} = 0$ for all $h\in G\setminus \{1_G,z\}$, proving item~(5). Note also that by items~(4), (9) and (10),
$$
\beta_{z,0}^{1,1}=\begin{cases}
                   -\beta_{1,0}^{1,1}\xcirc \beta_{z,0}^{z,0} = -\beta_{1,0}^{1,1} &\text{if $z\ne 1_G$,}\\
                  \beta_{1,0}^{1,1}\xcirc \beta_{1,0}^{1,0} = 0  &\text{if $z = 1_G$,}
                  \end{cases}
$$
which proves item~(6). Combining item~(4) with the fact that $\beta^{z,0}_{h,j}=0$ for all $h\in G$ and $j>0$, we deduce that
$$
s(z\ot v)=v\ot z\qquad\text{for all $v\in V$.}
$$
Furthermore, by items~(5) and~(6), equality~\eqref{ecua4} becomes
\begin{equation}\label{ecua5}
s(x\ot v) = \begin{cases} \beta_{1,1}^{1,1}(v)\ot x  &\text{if $p\ne 1$ or $z = 1_G$,}\\ \beta_{1,1}^{1,1}(v)\ot x + \beta_{1,0}^{1,1}(v)\ot 1_{H_{\mathcal{D}}} - \beta_{1,0}^{1,1}(v)\ot z &\text{otherwise.}\end{cases}
\end{equation}
Next we prove that if $p=1$ and $q\ne -1$, then $\beta_{1,0}^{1,1} = 0$. If $z=1_G$ this was checked above. So we can assume that $z\ne 1_G$. To abbreviate expressions we set $\alpha:=\beta_{1,1}^{1,1}$ and $\beta:=\beta_{1,0}^{1,1}$. Evaluating
$$
(s\ot H_{\mathcal{D}})\xcirc (H_{\mathcal{D}}\ot s)\xcirc (c_q\ot V) \qquad\text{and}\qquad (V\ot c_q)\xcirc(s\ot H_{\mathcal{D}})\xcirc (H_{\mathcal{D}}\ot s)
$$
in $x\ot x\ot v$ for all $v\in V$, and using~\eqref{ecua5} and that these maps coincide, we see that
\begin{equation*}
q \beta\xcirc \alpha = \alpha\xcirc \beta\quad\text{and}\quad  q\alpha\xcirc \beta = \beta \xcirc \alpha.
\end{equation*}
Then $q^2\alpha\circ \beta = \alpha\circ \beta$, and so $\beta = 0$, since $q^2\ne 1$ and $\alpha$ is injective. Hence~\eqref{ecua5} becomes
$$
s(x\ot v) = \alpha (v)\ot v \quad\text{for all $v\in V$.}
$$
Consequently $s(x\ot V)\subseteq V\ot x$ and a similar computation with $s$ replaced by $s^{-1}$ shows that $s(x\ot V)\supseteq V\ot x$, which immediately proves that $\alpha$ is a surjective map.
\end{proof}

In the rest of the paper we assume that $(p,q)\ne (1,-1)$.

\begin{proposition}\label{estructuras trenzadas de HsubD} Let $V$ be a $k$-vector space endowed with an $\Aut_{\chi,z}(G)$-gradation
$$
V = \bigoplus_{\zeta\in \Aut_{\chi,z}(G)} V_{\zeta}
$$
and an automorphism $\alpha\colon V\to V$ fulfilling

\begin{itemize}

\smallskip

\item[-] $\alpha(V_{\zeta})=V_{\zeta}$ for all $\zeta\in\Aut_{\chi,z}(G)$,

\smallskip

\item[-] $\alpha^n=\ide$ if $\lambda(z^n-1_G)\ne 0$.

\smallskip

\end{itemize}
Then the pair $(V,s)$, where $s\colon H_{\mathcal{D}}\ot V\longrightarrow V\ot H_{\mathcal{D}}$ is the map defined by
\begin{equation}
s(gx^i\ot v):=\alpha^i(v)\ot \zeta(g)x^i\qquad \text{for all $v\in V_{\zeta}$,}\label{eqrankone1}
\end{equation}
is a left $H_{\mathcal{D}}$-space. Furthermore, all the left $H_{\mathcal{D}}$-spaces with underlying $k$-vector space $V$ have this form.
\end{proposition}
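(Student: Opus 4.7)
The plan is to prove the two assertions separately.

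For the first---that the map $s$ defined by~\eqref{eqrankone1} is a left $H_{\mathcal{D}}$-space---bijectivity is clear (its inverse sends $v\ot gx^i$ to $\zeta^{-1}(g)x^i\ot \alpha^{-i}(v)$ for $v\in V_\zeta$, and is well defined because $\alpha$ preserves the gradation), and compatibility of $s$ with $\eta_{H_{\mathcal{D}}}$ and $\ep_{H_{\mathcal{D}}}$ is immediate. By Remark~\ref{basta verificar sobre generadores} it then suffices to check compatibility with the braid and with the multiplication of $H_{\mathcal{D}}$, together with compatibility with $\Delta$ evaluated on simple tensors $g\ot v$ and $x\ot v$. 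Each is a direct calculation on basis elements: the braid identity is immediate from~\eqref{def braid}, the comultiplication identity follows from~\eqref{comultiplication} together with the fact that $\alpha$ preserves the gradation, and the multiplication identity splits into the cases $i+j<n$ and $i+j\ge n$ when one computes $s(gx^i\xcdot hx^j\ot v)$ for $v\in V_\zeta$. The first case reduces to $\chi(h)^i=\chi(\zeta(h))^i$ and $\zeta(gh)=\zeta(g)\zeta(h)$, while the second (in which $x^n$ is replaced by $\lambda(z^n-1_G)$) additionally requires $\zeta(z^n)=z^n$ and $\alpha^{i+j-n}(v)=\alpha^{i+j}(v)$; these are exactly the hypotheses that $\zeta\in\Aut_{\chi,z}(G)$ and that $\alpha^n=\ide$ whenever $\lambda(z^n-1_G)\ne 0$.

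For the converse, start with a left $H_{\mathcal{D}}$-space $(V,s)$. Proposition~\ref{estructuras trenzadas de HsubD (primer resultado)} yields $s(kG\ot V)=V\ot kG$, $s(z\ot v)=v\ot z$ for all $v$, and an automorphism $\alpha\in\Aut(V)$ with $s(x\ot v)=\alpha(v)\ot x$. The restriction of $s$ to $kG\ot V$ makes $V$ a left $kG$-space, so Theorem~\ref{tercer resultado sobre estructuras trenzadas de grupos} furnishes an $\Aut(G)$-gradation $V=\bigoplus_{\zeta\in\Aut(G)}V_\zeta$. It remains to show, for every $\zeta$ with $V_\zeta\ne 0$, that $\zeta\in\Aut_{\chi,z}(G)$, that $\alpha(V_\zeta)=V_\zeta$, and that $\alpha^n=\ide$ when $\lambda(z^n-1_G)\ne 0$. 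For $v\in V_\zeta$ the two expressions $s(z\ot v)=v\ot z$ (from Proposition~\ref{estructuras trenzadas de HsubD (primer resultado)}) and $s(z\ot v)=v\ot\zeta(z)$ (from the definition of $V_\zeta$) give $\zeta(z)=z$. To deduce $\chi\xcirc\zeta=\chi$, evaluate $s(xg\ot v)$ in two ways---using $xg=\chi(g)gx$ followed by $s$ on $gx\ot v$, versus using the multiplication-compatibility of $s$ applied to $x\ot g\ot v$---and compare coefficients. To prove $\alpha(V_\zeta)\subseteq V_\zeta$, apply the braid-compatibility identity to $g\ot x\ot v$ for $v\in V_\zeta$: decomposing $\alpha(v)=\sum_\eta w_\eta$ with $w_\eta\in V_\eta$, one obtains $\sum_\eta w_\eta\ot\eta(g)=\alpha(v)\ot\zeta(g)$ in $V\ot kG$ for every $g\in G$, and since $G$ is a basis of $kG$ and $G$ is finite this forces $w_\eta=0$ for $\eta\ne\zeta$; bijectivity of $\alpha$ upgrades the inclusion to an equality. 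Finally, iterating $s(x\ot v)=\alpha(v)\ot x$ yields $s(x^n\ot v)=\alpha^n(v)\ot x^n$, which combined with $x^n=\lambda(z^n-1_G)$ and $s(z^n\ot v)=v\ot z^n$ gives $\alpha^n(v)\ot\lambda(z^n-1_G)=\lambda v\ot(z^n-1_G)$; this forces $\alpha^n=\ide$ whenever the right-hand side is nonzero.

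The main obstacle is expected to be the invariance $\alpha(V_\zeta)\subseteq V_\zeta$, since this is the only place where the braid-compatibility has to be combined with the gradation decomposition in a nontrivial way; all the other ingredients reduce either to a routine basis computation in $H_{\mathcal{D}}$ or to an appeal to Proposition~\ref{estructuras trenzadas de HsubD (primer resultado)} and Theorem~\ref{tercer resultado sobre estructuras trenzadas de grupos}.
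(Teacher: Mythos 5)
Your proof is correct and follows essentially the same route as the paper: the forward direction is the same routine verification via Remark~\ref{basta verificar sobre generadores}, and the converse invokes Proposition~\ref{estructuras trenzadas de HsubD (primer resultado)} and Theorem~\ref{tercer resultado sobre estructuras trenzadas de grupos} and then extracts $\zeta(z)=z$, $\chi\xcirc\zeta=\chi$, $\alpha(V_{\zeta})=V_{\zeta}$ and $\alpha^n=\ide$ exactly as the paper does. The only (immaterial) difference is that the paper obtains $\alpha(V_{\zeta})\subseteq V_{\zeta}$ and $\chi\xcirc\zeta=\chi$ simultaneously from the single computation of $s(xg\ot v)$ via the relation $xg=\chi(g)gx$, whereas you derive the invariance separately from the braid-compatibility applied to $g\ot x\ot v$; both computations are valid.
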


\begin{proof} It is easy to check that the map $s$ defined by~\eqref{eqrankone1} is compatible with the unit, the counit, the multiplication map and the braid of $H_{\mathcal{D}}$. So, by Remark~\ref{basta verificar sobre generadores}, in order to check that $s$ is a left transposition it suffices to verify that
$$
(s\ot H_{\mathcal{D}})\xcirc(H_{\mathcal{D}}\ot s)\xcirc(\Delta\ot V)(x\ot v)= (V\ot \Delta)\xcirc s(x\ot v)
$$
and
$$
(s\ot H_{\mathcal{D}})\xcirc(H_{\mathcal{D}}\ot s)\xcirc(\Delta\ot V)(g\ot v)= (V\ot \Delta)\xcirc s(g\ot v)\quad\text{for $g\in G$,}
$$
which is clear.

\smallskip

Conversely, assume that $(V,s)$ is a left $H_{\mathcal{D}}$-space. By Proposition~\ref{estructuras trenzadas de HsubD (primer resultado)} and Theorem~\ref{tercer resultado sobre estructuras trenzadas de grupos}, we know that there exist an automorphism $\alpha$ of $V$ and a gradation
$$
V=\bigoplus_{\zeta\in \Aut(G)} V_{\zeta}
$$
of $V$, such that $s(g\ot v)= v\ot \zeta(g)$ and $s(x\ot v)=\alpha(v)\ot x$ for all $g\in G$ and $v\in V_{\zeta}$. Again by Proposition~\ref{estructuras trenzadas de HsubD (primer resultado)}, we also know that $s(z\ot v)=v\ot z$ for all $v\in V$. Therefore,  if $V_{\zeta}\ne 0$, then $\zeta(z)=z$. Now, let $g\in G$ and $v\in V_{\zeta}\setminus \{0\}$. A direct computation shows that
\begin{align*}
\alpha(v) \ot x\zeta(g)& =s(xg\ot v)\\
&=s\bigl(\chi(g)gx \ot v\bigr)\\
&=\sum_{\phi\in \Aut(G)} \alpha(v)_{\phi}\ot \chi(g)\phi(g)x\\
&= \sum_{\phi\in \Aut(G)} \alpha(v)_{\phi}\ot \chi(g)\chi(\phi(g))^{-1}x\phi(g).
\end{align*}
Since $g$ is arbitrary, from this it follows that $\alpha(v)_{\phi}=0$ for $\phi\ne \zeta$ and that $\chi(\zeta(g)) = \chi(g)$. So
$$
\alpha(V_{\zeta})=V_{\zeta}\qquad\text{and}\qquad \chi\xcirc \zeta=\chi.
$$
Lastly, suppose that $\lambda(z^n-1_G)\ne 0$. Then
$$
v\ot \lambda(z^n-1_G)=s\bigl(\lambda(z^n-1_G)\ot v\bigr)=s (x^n\ot v)= \alpha^n(v)\ot x^n= \alpha^n(v)\ot \lambda(z^n-1_G),
$$
for each $v\in V$. This shows that $\alpha^n=\ide$ and finishes the proof.
\end{proof}

Our next aim is to characterize the right $H_{\mathcal{D}}$-braided comodule structures. Let $(V,s)$ be a left $H_{\mathcal{D}}$-space and let
$$
V=\bigoplus_{\zeta\in \Aut_{\chi,z}(G)} V_{\zeta}\qquad\text{and}\qquad \alpha\colon V\longrightarrow V
$$
be the decomposition and the automorphism associated with the left transposition $s$. Each map
$$
\nu\colon V\longrightarrow V\ot H_{\mathcal{D}}
$$
determines and it is determined by a family of maps
\begin{equation}
\bigl(U^g_i\colon V\longrightarrow V\bigr)_{g\in G,\, 0\le i<n}\label{defUgm}
\end{equation}
via
\begin{equation}
\nu(v):= \sum_{\cramped{\substack{g\in G\\ 0\le i<n}}} U^g_i(v)\ot gx^i.\label{defnu}
\end{equation}

\begin{proposition}\label{caracterizacion de Hsub D comodules} The pair $(V,s)$ is a right $H_{\mathcal{D}}$-comodule via $\nu$ iff

\begin{enumerate}

\smallskip

\item $U^g_i(V_{\zeta})\subseteq V_{\zeta}$ for all $g\in G$, $\zeta \in \Aut_{\chi,z}(G)$ and $i\in \{0,1\}$,

\smallskip

\item $(U^g_0)_{g\in G}$ is a complete family of orthogonal idempotents,

\smallskip

\item $U^g_1=U^g_0\xcirc U^g_1=U^g_1\xcirc U^{gz}_0$ for all $g\in G$,

\smallskip

\item $U^g_i=\frac{1}{(i)!_{qp}}\, U^g_1\xcirc U^{gz}_1\xcirc\cdots\xcirc U^{gz^{i-1}}_1$ for all $g\in G$ and $1\le i<n$,

\smallskip

\item $U^g_1\xcirc U^{gz}_1\xcirc\cdots\xcirc U^{gz^{n-1}}_1 =0$ for all $g\in G$,

\smallskip

\item $\alpha\xcirc U^g_0 = U^g_0\xcirc \alpha$ and $q\,\alpha\xcirc U^g_1 = U^g_1\xcirc \alpha$ for all $g\in G$.

\end{enumerate}
\end{proposition}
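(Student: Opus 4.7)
The plan is to invoke Remark~\ref{re: H-braided comodule}: $(V,s)$ is a right $H_{\mathcal{D}}$-comodule iff $(V,\nu)$ is a standard right $H_{\mathcal{D}}$-comodule \emph{and} the braided compatibility $(\nu\ot H)\xcirc s = (V\ot c_q)\xcirc (s\ot H)\xcirc (H\ot \nu)$ holds. The strategy is to translate each of the three requirements (counit, coassociativity, braided compatibility) into conditions on the family $(U_i^g)$ defined in~\eqref{defUgm}, and to read off the list~(1)--(6).

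First, the counit axiom $(V\ot\ep)\xcirc\nu = \ide_V$: since $\ep(gx^i)=\delta_{i,0}$, this reads $\sum_{g\in G} U_0^g = \ide_V$, giving the completeness part of~(2). Next, for coassociativity, I would use formula~\eqref{comultiplication} for $\Delta(gx^i)$ and match the coefficients of the basis vectors $hx^k\ot gx^i$ on both sides of $(V\ot\Delta)\xcirc\nu = (\nu\ot H)\xcirc\nu$; the key identity that emerges is
\begin{equation*}
U_k^h \xcirc U_i^g \;=\; \begin{cases} \binom{k+i}{k}_{qp}\, U_{k+i}^h & \text{if } g = hz^k \text{ and } k+i<n, \\ 0 & \text{otherwise.} \end{cases}
\end{equation*}
Specialising this relation: the case $k=i=0$, together with the counit identity, yields the orthogonal idempotents part of~(2); the cases $(k,i)=(0,1)$ and $(k,i)=(1,0)$ yield~(3); an induction on $i$ using $k=1$, valid because $(i)_{qp}\ne 0$ for $1\le i<n$ (as $qp$ is a primitive $n$-th root of $1$), yields~(4); and the boundary case $k=1$, $i=n-1$ combined with~(4) applied to $U_{n-1}^{gz}$ yields~(5). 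Conversely, from~(2)--(5) together with the $q$-binomial identity~\eqref{eq13}, the above multiplication rule can be reconstructed, so the coalgebra axioms hold.

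Finally, I would verify the braided compatibility. Because $H_{\mathcal{D}}$ is generated as an algebra by $G\cup\{x\}$, and because $s$ is compatible with $\mu$ while $c_q$ intertwines with $\mu$ via the braided multiplication, the compatibility reduces (as in Remark~\ref{basta verificar sobre generadores}) to its evaluation on $g\ot v$ and $x\ot v$. Evaluating on $g\ot v$ for $v\in V_\zeta$, and using $s(g\ot w) = w\ot\zeta'(g)$ whenever $w\in V_{\zeta'}$ together with $c_q(g\ot hx^i) = hx^i\ot g$, forces $U_i^h(v)\in V_\zeta$ for every $h,i$, which is~(1) (and it suffices to state it for $i\in\{0,1\}$ in view of~(4)). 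Evaluating on $x\ot v$ for $v\in V_\zeta$, using $s(x\ot w) = \alpha(w)\ot x$ for $w\in V_\zeta$ and $c_q(x\ot gx^i)=q^i\, gx^i\ot x$, yields $U_i^g\xcirc\alpha = q^i\,\alpha\xcirc U_i^g$, which is exactly~(6) for $i\in\{0,1\}$.

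I expect the main obstacle to be the inductive derivation of~(4) from the case $k=1$ of the coassociativity relation, and, on the converse side, the reconstruction of the full rule for $U_k^h\xcirc U_i^g$ from conditions~(2)--(5) via the identity~\eqref{eq13}; the remaining steps are routine matching of coefficients in the basis $\{gx^i\}_{g\in G,\,0\le i<n}$ of $H_{\mathcal{D}}$.
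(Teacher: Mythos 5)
Your plan is correct and follows essentially the same route as the paper: translate the counit, coassociativity and braided-compatibility axioms into conditions on the family $(U^g_i)$, extract the composition rule $U^h_k\xcirc U^g_i=\binom{k+i}{k}_{qp}U^h_{k+i}$ (for $g=hz^k$, $k+i<n$, and $0$ otherwise) from coassociativity, specialise to obtain (1)--(6), and reconstruct that rule from (4)--(5) for the converse. The only cosmetic differences are that the paper verifies the braided compatibility directly on every $gx^i\ot v$ (rather than reducing to the generators $g$ and $x$, which needs its own routine justification since Remark~\ref{basta verificar sobre generadores} concerns a different compatibility) and reconstructs the composition rule from the factorial definition of $\binom{i+j}{i}_{qp}$ rather than from~\eqref{eq13}.
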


\begin{proof} For each $v\in V_{\zeta}$, $h\in G$ and $0\le j<n$, write
$$
U^h_j(v)=\sum_{\phi\in \Aut_{\chi,z}(G)} U^h_j(v)_{\phi}\qquad\text{with $U^h_j(v)_{\phi}\in V_{\phi}$}.
$$
Since
$$
(\nu\ot H_{\mathcal{D}})\xcirc s(gx^i\ot v) = \sum_{\cramped{\substack{h\in G\\ 0\le j<n}}} U^h_j\bigl(\alpha^i(v)\bigr)\ot hx^j\ot \zeta(g)x^i
$$
and
$$
(V\ot c_q)\xcirc (s\ot H_{\mathcal{D}})\xcirc (H_{\mathcal{D}}\ot \nu)(gx^i\ot v)= \sum_{\cramped{\substack{h\in G\\ 0\le j<n}}}\sum_{\phi\in \Aut_{\chi,z}(G)}\! q^{ij}\,\alpha^i\bigl(U^h_j(v)_{\phi}\bigr)\ot hx^j \ot \phi(g)x^i
$$
the map $\nu$ satisfies condition~\eqref{eq6} in Remark~\ref{re: H-braided comodule} iff
$$
\sum_{\cramped{\substack{h\in G\\ 0\le j<n}}} U^h_j\bigl(\alpha^i(v)\bigr)\ot hx^j\ot \zeta(g)x^i = \sum_{\cramped{\substack{h\in G\\ 0\le j<n}}}\sum_{\phi\in \Aut_{\chi,z}(G)}\! q^{ij}\,\alpha^i\bigl(U^h_j(v)_{\phi}\bigr)\ot hx^j \ot \phi(g)x^i,
$$
for all $\zeta \in \Aut_{\chi,z}(G)$, $v\in V_{\zeta}$, $g\in G$ and $0\le i<n$. Since $\zeta$, $v$ and $g$ are arbitrary, $\alpha(V_{\phi})=V_{\phi}$ for all $\phi\in\Aut_{\chi,z}(G)$, and $\alpha$ is bijective, this happens iff
\begin{equation}
U^h_j(V_{\zeta})\subseteq V_{\zeta}\quad\text{and}\quad q^j\,\alpha\xcirc U^h_j= U^h_j\xcirc \alpha,\label{eqrankone4}
\end{equation}
for all $h$, $j$, and $\zeta$. On the other hand, since $\epsilon(gx^i)=\delta_{0i}$, the map $\nu$ is counitary iff
\begin{equation}
\sum_{g\in G} U^g_0=\ide,
\label{eqrankone5}
\end{equation}
and since
\begin{align*}
&(V\ot \Delta)\xcirc \nu(v)= \sum_{\cramped{\substack{g\in G\\ 0\le i<n}}} U^g_i(v) \ot \Delta(gx^i) =\sum_{\cramped{\substack{g\in G\\ 0\le i<n}}} \sum_{j=0}^i \binom{i}{j}_{qp} U^g_i(v) \ot gx^j \ot gz^jx^{i-j}
\shortintertext{and}
& (\nu\ot H_{\mathcal{D}})\xcirc \nu(v) = \sum_{\cramped{\substack{h\in G\\ 0\le l<n}}} \nu\bigl(U^h_l(v)\bigr)\ot hx^l = \sum_{\cramped{\substack{h\in G\\ 0\le l<n}}} \sum_{\cramped{\substack{g\in G\\ 0\le j<n}}} U^g_j\bigl(U^h_l(v)\bigr)\ot gx^j \ot hx^l,
\end{align*}
it is coassociative iff
\begin{equation}\label{eqrankone6}
U^g_j\xcirc U^h_l=\begin{cases}
               \binom{j+l}{j}_{qp} U^g_{j+l} &\text{if $h=gz^j$ and $j+l<n$,}\\
               0 &\text{otherwise.}
         \end{cases}
\end{equation}
Thus, in order to prove this proposition we must show items~(1)--(6) are equivalent to conditions~\eqref{eqrankone4}, \eqref{eqrankone5} and~\eqref{eqrankone6}. It is evident that~\eqref{eqrankone4} implies items~(1) and~(6), while items~(2) and~(3) follow from~\eqref{eqrankone5} and~\eqref{eqrankone6}. Finally, using~\eqref{eqrankone6} again it is easy to prove by induction on $j$ that items~(4) and~(5) are also satisfied. Conversely, assume that the maps $U^g_i$ satisfy items~(1)--(6). It is clear that item~(2) implies condition~\eqref{eqrankone5}, and equality~\eqref{eqrankone4} follows from items~(1), (4) and~(6). It remains to prove equality~\eqref{eqrankone6}. We claim that
\begin{equation}
U^f_i\xcirc U^{fz^i}_j = \begin{cases}
                   \binom{i+j}{i}_{qp} U^f_{i+j} &\text{if $i+j<n$,}\\
                   0 &\text{if $i+j\ge n$.}
            \end{cases}\label{eqrankone7}
\end{equation}
By item~(2) this is true if $j=i=0$. In order to check it when $j>0$ and $i=0$ or $j=0$ and $i>0$, it suffices to note that by items~(3) and~(4),
$$
U^f_0\xcirc U^f_i= \frac{1}{(i)!_{qp}}\, U^f_0 \xcirc U^f_1\xcirc\cdots\xcirc U^{fz^{i-1}}_1= \frac{1}{(i)!_{qp}}\, U^f_1\xcirc\cdots\xcirc U^{fz^{i-1}}_1= U^f_i
$$
and
$$
U^f_i\xcirc U^{fz^i}_0= \frac{1}{(i)!_{qp}}\,U^f_1\xcirc\cdots\xcirc U^{fz^{i-1}}_1\xcirc U^{fz^i}_0=\frac{1}{(i)!_{qp}}\,U^f_1\xcirc\cdots\xcirc U^{fz^{i-1}}_1= U^f_i,
$$
respectively. Assume now that $j>0$ and $i>0$. Then, by item~(4),
$$
U^f_i\xcirc U^{fz^i}_j =\frac{1}{(i)!_{qp}}\frac{1}{(j)!_{qp}}\, U^f_1\xcirc\cdots\xcirc U^{fz^{i-1}}_1\xcirc U^{fz^i}_1\xcirc\cdots\xcirc U^{fz^{i+j-1}}_1,
$$
and the claim follows immediately from items~(4) and~(5). Note now that~\eqref{eqrankone7} implies that
$$
U^f_i\xcirc U^h_j=U^f_i\xcirc U^{fz^i}_0\xcirc U^h_0\xcirc U^h_j
$$
which, combined with item~(2), shows that
$$
U^f_i\xcirc U^h_j=0 \quad\text{if $h\ne fz^i$,}
$$
finishing the proof of~\eqref{eqrankone6}.
\end{proof}

\begin{corollary}\label{segunda caracterizacion de HsubD comodules} Let $V$ be a $k$-vector space. Each data consisting of

\begin{itemize}

\smallskip

\item[-] a $G\times \Aut_{\chi,z}(G)$-gradation $\displaystyle{V=\bigoplus_{(g,\zeta)\in G\times \Aut_{\chi,z}(G)} V_{g,\zeta}}$ of $V$,

\smallskip

\item[-] an automorphism $\alpha\colon V\to V$ of $V$ such that
$$
\qquad \alpha^n=\ide\text{ if }\lambda(z^n-1_G)\ne 0\quad\text{and}\quad \alpha(V_{g,\zeta})=V_{g,\zeta}\text{ for all $(g,\zeta)\in G\times \Aut_{\chi,z}(G)$,}
$$

\smallskip

\item[-] a map $U\colon V \to V$, such that
$$
\qquad U\xcirc \alpha = q\,\alpha\xcirc U,\quad U^n = 0\quad\text{and}\quad U(V_{g,\zeta})\subseteq V_{gz^{-1},\zeta} \text{ for all $(g,\zeta)\in G\times \Aut_{\chi,z}(G)$},
$$

\smallskip

\end{itemize}
determines univocally a right $H_{\mathcal{D}}$-comodule $(V,s)$, in which

\begin{itemize}

\smallskip

\item[-] $s\colon H_{\mathcal{D}}\ot V \longrightarrow V\ot H_{\mathcal{D}}$ is the left transposition of $H_{\mathcal{D}}$ on $V$ associated as in~\eqref{eqrankone1} with the map $\alpha$ and the $\Aut_{\chi,z}(G)$-gradation of $V$
$$
V=\bigoplus_{\zeta\in \Aut_{\chi,z}(G)}V_{\zeta},\qquad\text{where } V_{\zeta}:=\bigoplus_{g\in G} V_{g,\zeta},
$$

\smallskip

\item[-]  the coaction $\nu\colon V\to V\ot H_{\mathcal{D}}$ of $(V,s)$ is defined by
$$
\nu(v):=\sum_{j=0}^{n-1} \frac{1}{(j)!_{qp}} U^j(v)\ot z^{-j}gx^j\qquad\text{for all $v\in V_g$,}
$$
where, for all $g\in G$,
$$
V_g:=\bigoplus_{\zeta\in \Aut_{\chi,z}(G)} V_{g,\zeta}.
$$

\smallskip

\end{itemize}
Furthermore, all the right $H_{\mathcal{D}}$-braided comodules with underlying $k$-vector space $V$ have this form.

\end{corollary}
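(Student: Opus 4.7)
The plan is to use Proposition~\ref{caracterizacion de Hsub D comodules} as a black box: I will establish a bijection between the data $(V=\bigoplus_{(g,\zeta)} V_{g,\zeta},\alpha,U)$ in the statement and the family $(U_i^g)_{g\in G,\,0\le i<n}$ satisfying conditions (1)--(6) of that proposition. Given such data, set $V_g:=\bigoplus_\zeta V_{g,\zeta}$ and $V_\zeta:=\bigoplus_g V_{g,\zeta}$, let $\pi_g\colon V\to V_g$ denote the canonical projection, build the left transposition $s$ from $\alpha$ and the $\Aut_{\chi,z}(G)$-gradation via Proposition~\ref{estructuras trenzadas de HsubD}, and define
$$
U_i^g:=\tfrac{1}{(i)!_{qp}}\,U^i\xcirc \pi_{gz^i},
$$
so that $U_0^g=\pi_g$ and $U_1^g=U\xcirc \pi_{gz}$. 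This is the unique definition making the coaction formula in the corollary equal to $\nu(v)=\sum_{h,i}U_i^h(v)\ot hx^i$, as can be seen by matching the right tensorands for $v\in V_g$.

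The verification of (1)--(6) is then routine. Item (1) follows from the compatibility of the two gradations together with the hypothesis $U(V_{g,\zeta})\subseteq V_{gz^{-1},\zeta}$; item (2) is immediate since $\{\pi_g\}_{g\in G}$ is a complete family of orthogonal idempotents; item (3) uses $U\xcirc \pi_{gz}=\pi_g\xcirc U$, which holds because $U$ sends $V_{gz}$ into $V_g$; item (4) follows by induction, using the identity $\pi_{gz^j}\xcirc U=U\xcirc \pi_{gz^{j+1}}$ repeatedly to absorb intermediate projections into $U^i$, yielding
$$
U_1^g\xcirc U_1^{gz}\xcirc\cdots\xcirc U_1^{gz^{i-1}}=U^i\xcirc \pi_{gz^i}=(i)!_{qp}\,U_i^g;
$$
item (5) then reads $U^n\xcirc \pi_{gz^n}=0$, which is exactly the hypothesis $U^n=0$; and item (6) follows from $\alpha\xcirc \pi_g=\pi_g\xcirc \alpha$ (since $\alpha$ preserves each $V_g$) combined with $U\xcirc \alpha=q\,\alpha\xcirc U$.

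For the converse, given a right $H_{\mathcal{D}}$-comodule $(V,s)$, extract $\alpha$ and the $\Aut_{\chi,z}(G)$-gradation from $s$ via Proposition~\ref{estructuras trenzadas de HsubD}; set $V_g:=\ima(U_0^g)$ (a $G$-gradation by (2), with $\pi_g=U_0^g$); take $V_{g,\zeta}:=V_g\cap V_\zeta$ to produce the joint $G\times \Aut_{\chi,z}(G)$-gradation (compatible thanks to item~(1)); and define $U:=\sum_{g\in G}U_1^g$, which is a well-defined endomorphism of $V$ because on each $V_h$ only the summand with $g=hz^{-1}$ contributes, by (3) and (2). The required properties of $U$ follow: $U\xcirc \alpha=q\,\alpha\xcirc U$ from (6), $U^n=0$ from (5) combined with (4), and $U(V_{g,\zeta})\subseteq V_{gz^{-1},\zeta}$ from (1) and (3). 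The two constructions are mutually inverse, and substituting $U_i^g=\tfrac{1}{(i)!_{qp}}U^i\xcirc \pi_{gz^i}$ into $\nu(v)=\sum_{h,i}U_i^h(v)\ot hx^i$ recovers the coaction displayed in the corollary. The only real obstacle is the index bookkeeping --- tracking the $G$-shift by $z^{-i}$ induced by $U^i$, the grading by $\zeta$, and the factors $(i)!_{qp}$ simultaneously --- but everything ultimately reduces to mechanical consequences of Proposition~\ref{caracterizacion de Hsub D comodules}.
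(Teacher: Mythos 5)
Your proposal is correct and follows essentially the same route as the paper: both directions reduce the statement to Proposition~\ref{caracterizacion de Hsub D comodules}, with the only cosmetic difference that you define $U^g_i=\tfrac{1}{(i)!_{qp}}U^i\xcirc\pi_{gz^i}$ in closed form while the paper gives the equivalent recursive definition $U^g_1=U^g_0\xcirc U\xcirc U^{gz}_0$, $U^g_j=\tfrac{1}{(j)!_{qp}}U^g_1\xcirc\cdots\xcirc U^{gz^{j-1}}_1$ (the two coincide via $\pi_g\xcirc U=U\xcirc\pi_{gz}$). The converse construction ($V_{g,\zeta}=V_g\cap V_\zeta$ with $V_g=\ima(U^g_0)$, and $U$ assembled from the $U^g_1$) is the same as the paper's.
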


\begin{proof} Assume we have a data as in the statement. Then we define a family of maps as in~\eqref{defUgm}, by

\begin{itemize}

\smallskip

\item[-] $U^g_0(v) := \pi_g(v)$, where $\pi_g\colon V\to V_g$ is the projection onto $V_g$ along $\bigoplus_{h\in G\setminus\{g\}} V_h$,

\smallskip

\item[-] $U^g_1 := U^g_0\xcirc U\xcirc U^{gz}_0$,

\smallskip

\item[-] $U^g_j=\frac{1}{(j)!_{qp}}\, U^g_1\xcirc U^{gz}_1\xcirc\cdots\xcirc U^{gz^{j-1}}_1$ for all $1<j<n$.

\smallskip

\end{itemize}
We must check that these maps satisfy the conditions required in Proposition~\ref{caracterizacion de Hsub D comodules}. Item~(1) is fulfilled since $U(V_{\zeta})\subseteq V_{\zeta}$ and $U_0^g(V_{\zeta})\subseteq V_{\zeta}$ for all $g\in G$, items~(2)--(4) hold by the very definition of the maps $U_i^g$, and item~(6) is fulfilled since $\alpha(V_g)=V_g$ for all $g\in G$ and $U\xcirc \alpha = q\,\alpha\xcirc U$. We next prove item~(5). Since $U^n = 0$, this trivially follows if we prove that, for all $j\ge 1$,
$$
U^g_1\xcirc U^{gz}_1\xcirc\cdots\xcirc U^{gz^{j-1}}_1(v) = \begin{cases} U^j(v) &\text{if $v\in V_{gz^j}$,}\\ 0& \text{if $v\in V_h$ with $h\ne gz^j$.}\end{cases}
$$
Clearly if $v\in V_h$ with $h\ne gz^j$, then $U_0^{gz^j}(v) = 0$, and so
$$
U^g_1\xcirc U^{gz}_1\xcirc\cdots\xcirc U^{gz^{j-1}}_1(v) = U^g_1\xcirc\cdots\xcirc U^{gz^{j-1}}_1\xcirc U_0^{gz^j}(v) = 0.
$$
It remains to consider the case $v\in V_{gz^j,\zeta}$. We proceed by induction on $j$. If $j=1$, then
$$
U_1^g(v) = U_0^g\xcirc U\xcirc U_0^{gz}(v) = U_0^g\xcirc U(v) = U(v),
$$
because $U(v)\in V_g$. Assume now $j>1$ and the result is valid for $j-1$. Then
$$
U^g_1\xcirc U^{gz}_1\xcirc\cdots\xcirc U^{gz^{j-1}}_1(v) = U^g_1\xcirc\cdots\xcirc U^{gz^{j-2}}_1 \xcirc U^{gz^{j-1}}_1(v) = U^g_1\xcirc\cdots\xcirc U^{gz^{j-2}}_1 \xcirc U(v) = U^j(v),
$$
where the last equality follows from the inductive hypothesis and the fact that $U(v)\in V_{gz^{j-1}}$.

\smallskip

Conversely assume that $(V,s)$ is a right $H_{\mathcal{D}}$-comodule via a coaction $\nu\colon V\to V\ot H_{\mathcal{D}}$. Let
$$
V=\bigoplus_{\zeta\in \Aut_{\chi,z}(G)} V_{\zeta}\qquad\text{and}\qquad \alpha\colon V\longrightarrow V
$$
be the decomposition and the automorphism associated with $s$ (see Proposition~\ref{estructuras trenzadas de HsubD}). By items~(1) and~(2) of Proposition~\ref{caracterizacion de Hsub D comodules}, we know that, for each $\zeta\in \Aut_{\chi,z}(G)$, the maps $U^g_0$'s determine by restriction a complete family $(U^g_0\colon V_{\zeta}\to V_{\zeta})_{g\in G}$ of orthogonal idempotents. Let
$$
V_{\zeta} = \bigoplus_{g\in G} V_{g,\zeta}
$$
be the decomposition associated with this family. Clearly
$$
V = \bigoplus_{(g,\zeta)\in G\times \Aut_{\chi,z}(G)} V_{g,\zeta}.
$$
By item~(6) of Proposition~\ref{caracterizacion de Hsub D comodules}, we have $\alpha\xcirc U^g_0 = U^g_0\xcirc \alpha$ for all $g\in G$. Since, by Proposition~\ref{estructuras trenzadas de HsubD} we know that $\alpha(V_{\zeta}) = V_{\zeta}$ for all $\zeta\in \Aut(G)$, this implies that
$$
\alpha(V_{g,\zeta})=V_{g,\zeta}\qquad \text{ for all $(g,\zeta)\in G\times \Aut_{\chi,z}(G)$}.
$$
We now define a map $U\colon V\to V$ by
$$
U(v) = U^g_1(v)\qquad\text{for all $v\in V_{gz,\zeta}$.}
$$
From items~(3) and (5) of Proposition~\ref{caracterizacion de Hsub D comodules}, it follows that $U^n = 0$, and using the second equality in item~(6) of the same proposition, we obtain that $\alpha\xcirc U =q\, U\xcirc\alpha$. Finally,  by items~(1) and~(3) of Proposition~\ref{caracterizacion de Hsub D comodules}, we have $U(V_{g,\zeta})\subseteq V_{z^{-1}g,\zeta}$ for all $(g,\zeta)\in G\times \Aut_{\chi,z}(G)$.

\smallskip

We leave the reader the task to prove that the construction given in the two parts of this proof are reciprocal one of each other.
\end{proof}

\begin{remark} Assume that $q=1$, or equivalently, that $H_{\mathcal{D}}$ is a Krop-Radford Hopf algebra. In this case $(V,s)$ is a standard $H_{\mathcal{D}}$-comodule (that is, $s$ is the flip) iff $V_{g,\zeta}= 0$ for $\zeta \ne \ide$ and $\alpha$ is the identity map. Hence, in order to obtain the standard $H_{\mathcal{D}}$-comodule structures, the conditions that we need verify (given in Corollary~\ref{segunda caracterizacion de HsubD comodules}) are considerably simplified.
\end{remark}

\begin{corollary}\label{coinvarianes de un HsubD comodulo} With the notations of the previous corollary, $V^{\coH}=V_{1_G}\cap \ker(U)$.
\end{corollary}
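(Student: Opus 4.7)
The plan is to compute $\nu(v)$ explicitly using the formula provided by Corollary~\ref{segunda caracterizacion de HsubD comodules}, impose the coinvariance condition $\nu(v)=v\otimes 1_{H_{\mathcal{D}}}$, and read off the resulting conditions by comparing coefficients in the basis $\{hx^j : h\in G,\ 0\le j<n\}$ of $H_{\mathcal{D}}$. First I would decompose an arbitrary $v\in V$ along the $G$-grading as $v=\sum_{g\in G} v_g$ with $v_g\in V_g$.

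Using the explicit formula for the coaction one obtains
$$
\nu(v)=\sum_{g\in G}\sum_{j=0}^{n-1}\frac{1}{(j)!_{qp}}\,U^j(v_g)\otimes z^{-j}gx^j,
$$
while $v\otimes 1_{H_{\mathcal{D}}}=\sum_{g\in G} v_g\otimes 1_G$. As $(g,j)$ varies over $G\times\{0,\dots,n-1\}$, the pairs $(z^{-j}g,j)$ also range bijectively over $G\times\{0,\dots,n-1\}$, so the elements $z^{-j}gx^j$ are pairwise distinct members of the basis of $H_{\mathcal{D}}$. Equating coefficients in this basis therefore splits the condition $\nu(v)=v\otimes 1_{H_{\mathcal{D}}}$ into two independent groups: the $j=0$ terms contribute $v_g\otimes g$, forcing $v_g=0$ for every $g\neq 1_G$, that is $v\in V_{1_G}$; and for each $1\le j<n$ and each $g\in G$ the coefficient of $z^{-j}gx^j$ must vanish, i.e.\ $U^j(v_g)=0$.

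Combining the two, $v\in V^{\coH}$ is equivalent to $v\in V_{1_G}$ together with $U^j(v)=0$ for all $1\le j<n$. The last step is to observe that once $v\in V_{1_G}$, iterating the rule $U(V_{g,\zeta})\subseteq V_{z^{-1}g,\zeta}$ gives $U^j(v)=U^{j-1}(U(v))$, so $U(v)=0$ alone already implies $U^j(v)=0$ for every $j\ge 1$. This reduces the full system of constraints to $v\in V_{1_G}$ and $U(v)=0$, yielding $V^{\coH}=V_{1_G}\cap\ker(U)$. There is no real obstacle beyond bookkeeping; the only point that deserves explicit mention is the bijectivity of $(g,j)\mapsto(z^{-j}g,j)$, which ensures no collapsing of basis terms and hence the clean separation of the coinvariance equations.
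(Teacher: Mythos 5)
Your proof is correct and is exactly the computation the paper has in mind: the paper's own proof simply declares the statement an immediate consequence of Corollary~\ref{segunda caracterizacion de HsubD comodules}, and what you have written is the straightforward unpacking of that corollary's coaction formula. The bookkeeping (bijectivity of $(g,j)\mapsto(z^{-j}g,j)$, and the reduction of $U^j(v)=0$ for all $j\ge 1$ to $U(v)=0$) is handled correctly.
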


\begin{proof} This is an immediate consequence of Corollary~\ref{segunda caracterizacion de HsubD comodules}.
\end{proof}

\begin{proposition}\label{estructura de transposiciones de HsubD} Let $B$ be an algebra. If
\begin{equation}
B = \bigoplus_{\zeta\in \Aut_{\chi,z} (G)^{\op}} B_{\zeta}\label{eqrankone8}
\end{equation}
is an $\Aut_{\chi,z}(G)^{\op}$-gradation of $B$ as an algebra and $\alpha\colon B\to B$ an automorphism of algebras that satisfies
\begin{itemize}

\smallskip

\item[-] $\alpha(B_{\zeta}) = B_{\zeta}$ for all $\zeta\in\Aut_{\chi,z}(G)$,

\smallskip

\item[-] $\alpha^n=\ide$ if $\lambda(z^n-1_G)\ne 0$,

\smallskip

\end{itemize}
then, the map $s\colon H_{\mathcal{D}}\ot B\longrightarrow B\ot H_{\mathcal{D}}$, given by
\begin{equation}
s(gx^i\ot b)=\alpha^i(b)\ot \zeta(g)x^i\quad\text{for all $b\in B_{\zeta}$,}\label{eq2}
\end{equation}
is a left transposition of $H_{\mathcal{D}}$ on the algebra $B$. Furthermore, all the left transpositions of $H_{\mathcal{D}}$ on $B$ have this form.
\end{proposition}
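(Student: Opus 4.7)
The plan is to reduce to Proposition~\ref{estructuras trenzadas de HsubD} and then isolate the extra content contributed by the algebra structure of $B$. For the first implication, assume $B$, $\alpha$ and the $\Aut_{\chi,z}(G)^{\op}$-gradation are as in the statement, and let $s$ be the map defined by~\eqref{eq2}. Since we are merely forgetting the multiplicative structure of $B$, Proposition~\ref{estructuras trenzadas de HsubD} already tells us that $(B,s)$ is a left $H_{\mathcal{D}}$-space. Hence, by Definition~\ref{def: transp alg}, it only remains to verify that $s$ is compatible with the unit and the multiplication of $B$. For the unit this amounts to $s(gx^i\ot 1_B) = 1_B\ot gx^i$, which follows from $1_B\in B_{\ide}$ and the fact that $\alpha$ is an algebra map (so $\alpha^i(1_B) = 1_B$).

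For the multiplication, by the algebra version of Remark~\ref{basta verificar sobre generadores}, it is enough to test the equality
$$
s\xcirc (H_{\mathcal{D}}\ot \mu_B) = (\mu_B\ot H_{\mathcal{D}})\xcirc (B\ot s)\xcirc (s\ot B)
$$
on simple tensors $g\ot b\ot b'$ and $x\ot b\ot b'$, with $b\in B_{\zeta}$ and $b'\in B_{\zeta'}$. A straightforward computation on the first gives
$$
\text{LHS} = bb' \ot (\zeta'\zeta)(g),\qquad \text{RHS} = bb'\ot (\zeta'\xcirc \zeta)(g),
$$
where in the LHS we have used that $B_{\zeta}B_{\zeta'}\subseteq B_{\zeta'\xcirc \zeta}$ (this is precisely why the gradation is taken over $\Aut_{\chi,z}(G)^{\op}$). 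On the second one both sides collapse to $\alpha(bb')\ot x$ and $\alpha(b)\alpha(b')\ot x$, respectively, which coincide since $\alpha$ is an algebra morphism. These two checks are the heart of the forward implication.

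For the converse, let $s$ be an arbitrary left transposition of $H_{\mathcal{D}}$ on the algebra $B$. Forgetting the multiplication, Proposition~\ref{estructuras trenzadas de HsubD} provides an $\Aut_{\chi,z}(G)$-gradation $B=\bigoplus_{\zeta}B_{\zeta}$ and an automorphism $\alpha$ of the underlying vector space of $B$, related to $s$ by~\eqref{eqrankone1} and satisfying $\alpha(B_{\zeta})=B_{\zeta}$, together with $\alpha^n=\ide$ when $\lambda(z^n-1_G)\ne 0$. It remains to extract from the compatibility of $s$ with the unit and the multiplication of $B$ the three facts: $1_B\in B_{\ide}$, $B_{\zeta}B_{\zeta'}\subseteq B_{\zeta'\xcirc \zeta}$, and $\alpha\in\Aut(B)$ as an algebra. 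The unit condition $s(g\ot 1_B)=1_B\ot g$ combined with $s(g\ot v)=v\ot\zeta(g)$ for $v\in B_{\zeta}$ immediately yields $1_B\in B_{\ide}$; evaluating the multiplicative compatibility on $g\ot b\ot b'$ (for homogeneous $b,b'$) gives the opposite-composition rule for the grading; and evaluating it on $x\ot b\ot b'$ yields $\alpha(bb')=\alpha(b)\alpha(b')$, while evaluation on $x\ot 1_B$ in the unit compatibility yields $\alpha(1_B)=1_B$.

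The only step that requires any genuine care is the appearance of the opposite group structure: the obstruction here is simply bookkeeping of the order in which $s$ is applied in the diagram defining compatibility with $\mu_B$, and making sure one writes the right-hand side as $(\mu_B\ot H_{\mathcal{D}})\xcirc (B\ot s)\xcirc (s\ot B)$ rather than with the opposite ordering. Once this is done, the rest of the argument is a direct verification with no subtle input beyond Proposition~\ref{estructuras trenzadas de HsubD} and Remark~\ref{basta verificar sobre generadores}.
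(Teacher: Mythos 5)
Your proof is correct and follows exactly the paper's route: the paper's own argument consists of reducing to Proposition~\ref{estructuras trenzadas de HsubD} and then asserting that compatibility of $s$ with the unit and multiplication of $B$ is equivalent to \eqref{eqrankone8} being an algebra gradation and $\alpha$ an algebra automorphism, leaving the verification to the reader. You have supplied precisely that verification, including the correct bookkeeping of $\zeta'\xcirc\zeta$ on the tensors $g\ot b\ot b'$ that forces the gradation to be over the opposite group.
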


\begin{proof} By Proposition~\ref{estructuras trenzadas de HsubD} in order to prove this it suffices to check that the formula~\eqref{eq2} defines a map compatible with the unit and the multiplication map of $B$ iff~\eqref{eqrankone8} is a gradation of $B$ as an algebra and $\alpha$ is an automorphism of algebras. We left this task to the reader.
\end{proof}

The group $\Aut_{\chi,z}(G)$ acts on $G^{\op}$ via $\zeta\cdot g:= \zeta(g)$. So, it makes sense to consider the semidirect product $G(\chi,z):= G^{\op}\rtimes \Aut_{\chi,z}(G)$.

\begin{definition}\label{graduacion compatible con D} Let $\mathcal{D}=(G,\chi,z,\lambda,q)$ be as in Corollary~\ref{algebras de Hopf trenzadas de K-R} and let $B$ be an algebra endowed with an algebra automorphism $\alpha\colon B\to B$, a map $U\colon B\to B$ and a $G(\chi,z)^{\op}$-gradation
\begin{equation}
B=\bigoplus_{(g,\zeta)\in G(\chi,z)^{\op}} B_{g,\zeta},\label{eeqq1}
\end{equation}
of $B$ as a vector space. We say that the decomposition~\eqref{eeqq1} of $B$ is {\em compatible with $\mathcal{D}$} if one of the following conditions is fulfilled:

\begin{enumerate}

\smallskip

\item $\lambda(z^n-1_G)=0$ and $\eqref{eeqq1}$ is a gradation of $B$ as an algebra.

\smallskip

\item $\lambda(z^n-1_G)\ne 0$, $1_B\in B_{1_G,\ide}$,
\begin{equation}\label{compa'}
\qquad\qquad\quad B_{g,\zeta}B_{h,\phi} \subseteq B_{\phi(g)h,\phi\xcirc \zeta}\oplus B_{z^{-n}\phi(g)h,\phi\xcirc \zeta}\quad\text{for all $(g,\zeta),(h,\phi) \in G(\chi,z)^{\op}$,}
\end{equation}
and, for each $b\in B_{g,\zeta}$ and $c\in B_{h,\phi}$, the homogeneous component $(bc)_{z^{-n}\phi(g)h,\phi\xcirc \zeta}$ of $bc$ of degree $(z^{-n}\phi(g)h,\phi\xcirc \zeta)$ is given by
\begin{equation}\label{compa}
\qquad (bc)_{z^{-n}\phi(g)h,\phi\xcirc \zeta}:= -\lambda \sum_{j=1}^{n-1}\frac{p^{j^2}\chi(h)^j}{(j)!_{qp} (n-j)!_{qp}} U^j(b)\alpha^j \bigl(U^{n-j}(c)\bigr).
\end{equation}
\end{enumerate}
\end{definition}

\begin{theorem}\label{caracterizacion de HsubD comodule algebras} Let $B$ be an algebra. Each data consisting of

\begin{itemize}

\smallskip

\item[-] a $G(\chi,z)^{\op}$-gradation
\begin{equation}
\qquad B=\bigoplus_{(g,\zeta)\in G(\chi,z)^{\op}} B_{g,\zeta},\label{eeeq1}
\end{equation}
of $B$ as a vector space,

\smallskip

\item[-] an algebra automorphism $\alpha\colon B\to B$ of $B$ such that
$$
\qquad \alpha^n=\ide\text{ if }\lambda(z^n-1_G)\ne 0\quad\text{and}\quad \alpha(B_{g,\zeta})=B_{g,\zeta}\text{ for all $(g,\zeta)\in G(\chi,z)^{\op}$,}
$$

\smallskip

\item[-] a map $U\colon B \to B$ such that
\begin{align}
&\quad\text{the decomposition~\eqref{eeeq1} is compatible with $\mathcal{D}$,}\notag\\
&\quad U\xcirc \alpha = q\,\alpha\xcirc U,\notag\\
&\quad U^n = 0,\notag\\
&\quad U(B_{g,\zeta})\subseteq B_{z^{-1}g,\zeta} \quad\text{for all $(g,\zeta)\in G(\chi,z)^{\op}$}\notag
\shortintertext{and}
&\quad U(bc) = bU(c) + \chi(h)U(b)\alpha(c)\quad\text{for all $b\in B$ and $c\in B_h$,}\label{eqrankone8*}
\end{align}
where
$$
\qquad B_h:=\bigoplus_{\zeta\in \Aut_{\chi,z}(G)} B_{h,\zeta}\qquad\text{for all $h\in G$,}
$$

\smallskip

\end{itemize}
determines a right $H_{\mathcal{D}}$-comodule algebra $(B,s)$, in which $s\colon H_{\mathcal{D}}\ot B \longrightarrow B\ot H_{\mathcal{D}}$ is the left transposition of $H_{\mathcal{D}}$ on $B$ associated with the map $\alpha$ and the $\Aut_{\chi,z}(G)^{\op}$-gradation of $B$
\begin{equation}
B=\bigoplus_{\zeta\in \Aut_{\chi,z}(G)^{\op}}B_{\zeta},\label{eqqqq}
\end{equation}
where $B_{\zeta}:=\bigoplus_{g\in G} B_{g,\zeta}$. The coaction $\nu\colon B\longrightarrow B\ot H_{\mathcal{D}}$ of $(B,s)$ is given by
\begin{equation}\label{eeeq2}
\nu(b):=\sum_{i=0}^{n-1} \frac{1}{(i)!_{qp}} U^i(b)\ot z^{-i} gx^i\qquad\text{for all $b\in B_g$.}
\end{equation}
Furthermore, all the right $H_{\mathcal{D}}$-braided comodule algebra structures with underlying algebra $B$ are obtained in this way.
\end{theorem}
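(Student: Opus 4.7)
The plan is to verify, one after another, the five conditions of Remark~\ref{re: H-braided comod alg}. Items~(1), (2) and~(3) come essentially for free from the earlier structural results: the compatibility condition~\eqref{compa'} forces the induced $\Aut_{\chi,z}(G)^{\op}$-decomposition $B=\bigoplus_{\zeta}B_\zeta$ (with $B_\zeta=\bigoplus_g B_{g,\zeta}$) to be a gradation of algebras, so by Proposition~\ref{estructura de transposiciones de HsubD} the map $s$ defined in~\eqref{eq2} is a left transposition of $H_{\mathcal{D}}$ on the algebra $B$; and the remaining hypotheses on $\alpha$ and $U$ are exactly those that Corollary~\ref{segunda caracterizacion de HsubD comodules} needs for~\eqref{eeeq2} to define a right $H_{\mathcal{D}}$-comodule structure on $(B,s)$. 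The unit condition $\nu(1_B)=1_B\ot 1_{H_{\mathcal{D}}}$ is then immediate from $1_B\in B_{1_G,\ide}$ (explicit in case~(2), automatic in case~(1)) together with $U(1_B)=0$, the latter being forced by plugging $c=1_B$ into~\eqref{eqrankone8*} and using $\alpha(1_B)=1_B$.

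The heart of the proof is multiplicativity of $\nu$. The key auxiliary identity is the $qp$-Leibniz formula
\[
U^k(bc)=\sum_{i+j=k}p^{-ij}\binom{k}{i}_{qp}\chi(h)^{i}\,U^{i}(b)\,\alpha^{i}\bigl(U^{j}(c)\bigr)\qquad(b\in B,\ c\in B_h),
\]
which I would prove by induction on $k$ from~\eqref{eqrankone8*}, the relation $U\xcirc\alpha=q\,\alpha\xcirc U$, and the standard Gauss-binomial recursion $\binom{k+1}{i}_{qp}=(qp)^{i}\binom{k}{i}_{qp}+\binom{k}{i-1}_{qp}$. Expanding both sides of $\nu\xcirc\mu_B=(\mu_B\ot\mu_{H_{\mathcal{D}}})\xcirc(B\ot s\ot H_{\mathcal{D}})\xcirc(\nu\ot\nu)$ on $b\ot c\in B_{g,\zeta}\ot B_{h,\phi}$ using this formula, the explicit shape of $s$, and the product rule $x^i\,gx^j=\chi(g)^i gx^{i+j}$ in $H_{\mathcal{D}}$, the terms with $i+j<n$ collapse (via $\binom{k}{i}_{qp}/(k)!_{qp}=1/\bigl((i)!_{qp}(j)!_{qp}\bigr)$) to the part of $\nu(bc)$ supported on the $B_{\phi(g)h}$-summand. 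In case~(2) the terms with $i+j\ge n$ call for the substitution $x^{i+j}=\lambda(z^n-1_G)x^{i+j-n}$, which is legitimate because $p^n=q^n=1$ makes $z^n$ commute with $x$ in $H_{\mathcal{D}}$; reading off the coefficient of $\ot z^{-n}\phi(g)h\,x^0$ and reindexing $i\mapsto n-j$, together with $p^{-(n-j)j}=p^{j^2}$, reproduces exactly the formula~\eqref{compa} for $(bc)_2$. The matching of the higher $x$-powers then follows by applying $U^k$ to~\eqref{compa} and invoking the $qp$-Leibniz formula a second time.

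For the converse, from an arbitrary right $H_{\mathcal{D}}$-comodule algebra $(B,s)$ Proposition~\ref{estructura de transposiciones de HsubD} and Corollary~\ref{segunda caracterizacion de HsubD comodules} already extract the $G\times\Aut_{\chi,z}(G)$-gradation, the automorphism $\alpha$ and the map $U$ with all the properties that do not refer to $\mu_B$. Matching the coefficients of $\ot 1_{H_{\mathcal{D}}}$ and $\ot x$ in $\nu(bc)=\nu(b)\nu(c)$ then forces the compatibility conditions~\eqref{compa'}--\eqref{compa} and the Leibniz rule~\eqref{eqrankone8*}, respectively. The principal obstacle I foresee is the book-keeping in case~(2)---pinning down the exponent $p^{j^2}$ in~\eqref{compa} via $p^n=1$ and aligning it correctly with the $i+j\ge n$ block of the direct computation; beyond that single coefficient check, everything reduces to the $qp$-Leibniz formula and the $q$-binomial identity~\eqref{eq13}.
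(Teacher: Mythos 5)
Your proposal is correct and follows essentially the same route as the paper: after invoking Proposition~\ref{estructura de transposiciones de HsubD} and Corollary~\ref{segunda caracterizacion de HsubD comodules}, everything reduces to the unit condition and the multiplicativity of $\nu$, which are settled by expanding $(\mu_B\ot\mu_{H_{\mathcal{D}}})\xcirc(B\ot s\ot H_{\mathcal{D}})\xcirc(\nu\ot\nu)$ on homogeneous elements, substituting $x^n=\lambda(z^n-1_G)$, and matching coefficients componentwise. Your closed-form $qp$-Leibniz identity for $U^k(bc)$ is just a repackaging of the paper's inductive verification of the componentwise formula (the paper's sum $\sum_j A_k^j(b,c)$, in which the $\lambda$-corrections cancel upon summing the two homogeneous components), so the difference is organizational rather than substantive.
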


\begin{proof} Let $(B,s)$ be a right $H_{\mathcal{D}}$-comodule, with $s$ a left transposition of $H_{\mathcal{D}}$ on the algebra $B$. Consider the subspaces $B_{g,\zeta}$ of $B$ and the maps $\alpha$ and $U$ associated with $(B,s)$ as in Corollary~\ref{segunda caracterizacion de HsubD comodules}. By that corollary, Proposition~\ref{estructura de transposiciones de HsubD}, and Remarks~\ref{re: H-braided comodule} and~\ref{re: H-braided comod alg}, in order to finish the proof it suffices to show that the coaction $\nu$ of $(B,s)$ satisfy
\begin{equation}
\nu(1_B)=1_B\ot 1_{H_{\mathcal{D}}}\quad\text{and}\quad\nu\xcirc \mu_B = (\mu_B\ot \mu_{H_{\mathcal{D}}})\xcirc (B\ot s\ot H_{\mathcal{D}})\xcirc (\nu \ot \nu) \label{eqa}
\end{equation}
iff the decomposition
\begin{equation}
B=\bigoplus_{(g,\zeta)\in G(\chi,z)^{\op}} B_{g,\zeta},\label{eeqq4}
\end{equation}
of $B$, is compatible with $\mathcal{D}$ and $U$ satisfies condition~\eqref{eqrankone8*}. First we make some remarks. Let $b\in B_{g,\zeta}$ and $c\in B_{h,\phi}$. By the definition of $\nu$,
\begin{equation}
\nu(bc) =  \sum_{i=0}^{n-1} \sum_{f\in G} \frac{1}{(i)!_{qp}} U^i\bigl((bc)_f\bigr)\ot z^{-i}fx^i.\label{eeqq5}
\end{equation}
On the other hand, a direct computation shows that
\begin{equation}
\begin{aligned}
F(b,c) &=\sum_{j=0}^{n-1} \sum_{i=0}^{n-1} \frac{p^{-ij}\chi(h)^j}{(j)!_{qp}(i)!_{qp}} U^j(b)\alpha^j\bigl(U^i(c)\bigr)\ot z^{-i-j}\phi(g) h x^{i+j}\\
&=\sum_{u=0}^{2n-2}\sum_{\substack{j=0\\0\le u-j<n}}^{n-1} \frac{p^{-(u-j)j}\chi(h)^j}{(j)!_{qp}(u-j)!_{qp}} U^j(b)\alpha^j\bigl(U^{u-j}(c)\bigr)\ot z^{-u}\phi(g) h x^u,
\end{aligned}\label{eeqq6'}
\end{equation}
where to abbreviate expressions we write
$$
F(b,c):=(\mu_B\ot\mu_{H_{\mathcal{D}}})\xcirc (B\ot s \ot H_{\mathcal{D}})(\nu(b)\ot\nu(c)).
$$
Set
$$
A_u^j(b,c):= \frac{p^{(j-u)j}\chi(h)^j}{(j)!_{qp}(u-j)!_{qp}} U^j(b)\alpha^j\bigl(U^{u-j}(c)\bigr).
$$
Since $x^n=\lambda(z^n-1_G)$, equality~\eqref{eeqq6'} becomes
\begin{equation}
\begin{aligned}
F(b,c)&=\sum_{i=0}^{n-1} \sum_{j=0}^i A_i^j(b,c) \ot z^{-i}\phi(g) hx^i +\lambda \sum_{i=0}^{n-1} \sum_{j=i+1}^{n-i} A_{i+n}^j(b,c) \ot z^{-n-i}\phi(g) h (z^n-1_G)x^i\\
&=\sum_{i=0}^{n-1}\biggl(\sum_{j=0}^i A_i^j(b,c)+\lambda \sum_{j=i+1}^{n-1} A_{i+n}^j(b,c)\biggr) \ot z^{-i}\phi(g) hx^i\\
& -\sum_{i=0}^{n-1} \sum_{j=i+1}^{n-1} \lambda A_{i+n}^j(b,c) \ot z^{-n-i}\phi(g) h x^i.
\end{aligned}\label{eeqq6}
\end{equation}

Next we prove the part $\Rightarrow$). We assume that $\lambda(z^n-1_G)\ne0$ and leave the case $\lambda(z^n-1_G)=0$, which is easier, to the reader. To begin with note that by the first equality in~\eqref{eqa},
$$
1_B\in B_{1_G}=\bigoplus_{\zeta\in \Aut_{\chi,z}(G)} B_{1_G,\zeta}.
$$
Since, on the other hand,~\eqref{eqqqq} is a gradation of $B$ as an algebra, necessarily
$$
1_B\in B_{\ide}=\bigoplus_{g\in G} B_{g,\ide},
$$
and so $1_B\in B_{1_G,\ide}$. Recall that $b\in B_{g,\zeta}$ and $c\in B_{h,\phi}$. Since by the second equality in~\eqref{eqa}, equations~\eqref{eeqq5} and~\eqref{eeqq6} coincide, we have
\begin{equation}\label{eqqqq1}
(bc)_f = \begin{cases} A_0^0(b,c)+\lambda \sum_{j=1}^{n-1} A_n^j(b,c)&\text{if $f = \phi(g)h$,}\\- \lambda \sum_{j=1}^{n-1} A_n^j(b,c)&\text{if $f = z^{-n}\phi(g)h$,}\\0&\text{otherwise,}\end{cases}
\end{equation}
and
\begin{equation}\label{eqqqq2}
U\bigl((bc)_f\bigr) = \begin{cases}\sum_{j=0}^1A_1^j(b,c)+\lambda \sum_{j=2}^{n-1} A_{1+n}^j(b,c)&\text{if $f = \phi(g)h$,}\\ - \lambda\sum_{j=2}^{n-1} A_{1+n}^j(b,c)&\text{if $f = z^{-n}\phi(g)h$,}\\ 0&\text{otherwise.}\end{cases}
\end{equation}
Since, by Proposition~\ref{estructura de transposiciones de HsubD}, we know that $bc \in B_{\phi\circ \zeta}$, from equality~\eqref{eqqqq1} it follows easily that the decomposition~\eqref{eeeq1} is compatible with $\mathcal{D}$ (recall that if $\lambda(z^n-1_G)\ne 0$, then $p^n=1$). Finally, by~\eqref{eqqqq2}
$$
U(bc) = \sum_f U\bigl((bc)_f\bigr) = \sum_{j=0}^1 A_1^j(b,c)+\lambda \sum_{j=2}^{n-1} A_{1+n}^j(b,c) -\lambda \sum_{j=2}^{n-1} A_{1+n}^j(b,c) = A_1^0(b,c) + A_1^1(b,c),
$$
and so, \eqref{eqrankone8*} is true.

\smallskip

We now prove the part $\Leftarrow$). So we assume that the decomposition~\eqref{eeqq4} is compatible with $\mathcal{D}$ and that $U$ satisfies~\eqref{eqrankone8*}. Again we consider the case $\lambda(z^n-1_G)\ne0$ and leave the case $\lambda(z^n-1_G)=0$, which is easier, to the reader. To begin with note that $\nu(1_B) = 1_B\ot 1_{H_{\mathcal{D}}}$, because
$$
1_B\in B_{1_G}\qquad\text{and}\qquad U(1_B)=1_BU(1_B)+U(1_B)1_B \Rightarrow U(1_B)=0.
$$
So, we are reduce to prove that the second condition in~\eqref{eqa} is fulfilled. By equalities~\eqref{eeqq5} and~\eqref{eeqq6}, this is equivalent to prove that for all $0\le i < n$ and $f\in G$,
\begin{equation}\label{eqqqq3}
\frac{1}{(i)!_{qp}} U^i\bigl((bc)_f\bigr) = \begin{cases}\sum_{j=0}^i A_i^j(b,c)+\lambda \sum_{j=i+1}^{n-1} A_{i+n}^j(b,c)&\text{if $f = \phi(g)h$,}\\ - \lambda\sum_{j=i+1}^{n-1} A_{i+n}^j(b,c)&\text{if $f = z^{-n}\phi(g)h$,}\\ 0&\text{otherwise.}\end{cases}
\end{equation}
For $i=0$ this follows easily from the fact that equality~\eqref{compa} holds and
$$
A_0^0(b,c) = bc = (bc)_{\phi(g)h,\phi\xcirc \zeta} + (bc)_{z^{-n}\phi(g)h,\phi\xcirc \zeta},
$$
since the decomposition~\eqref{eeqq4} is compatible with $\mathcal{D}$. Assume by inductive hypothesis that equality~\eqref{compa} is true for $i$ and that $i<n-1$. This implies that
\begin{equation*}
\frac{1}{(i)!_{qp}}U^{i+1}\bigl((bc)_f\bigr) = \begin{cases}\sum_{j=0}^i  U\bigl(A_i^j(b,c)\bigr)+\lambda \sum_{j=i+1}^{n-1} U\bigl(A_{i+n}^j(b,c) \bigr)&\text{if $f = \phi(g)h$,}\\ - \lambda\sum_{j=i+1}^{n-1} U\bigl(A_{i+n}^j(b,c)\bigr)&\text{if $f = z^{-n}\phi(g)h$,}\\ 0&\text{otherwise.}\end{cases}
\end{equation*}
So, we must prove that
\begin{equation}\label{pepe1}
\frac{1}{(i+1)_{qp}} \sum_{j=i+1}^{n-1} U\bigl(A_{i+n}^j(b,c)\bigr) = \sum_{j=i+2}^{n-1} A_{i+1+n}^j(b,c)
\end{equation}
and
\begin{equation}\label{pepe2}
\frac{1}{(i+1)_{qp}} \sum_{j=0}^i  U\bigl(A_i^j(b,c)\bigr) = \sum_{j=0}^{i+1} A_{i+1}^j(b,c).
\end{equation}
Recall again that $b\in B_{g,\zeta}$ and $c\in B_{h,\phi}$. Using the equality~\eqref{eqrankone8*} and the facts that $U\xcirc \alpha = q \alpha\xcirc U$, $U^u(c) \in B_{z^{-u}h,\phi}$ for all $u\in \mathds{N}$, and $p^n = 1$, we obtain
\begin{align*}
U\bigl(A_i^j(b,c)\bigr) &= \frac{p^{(j-i)j}\chi(h)^j}{(j)!_{qp}(i-j)!_{qp}} U\bigl(U^j(b)\alpha^j\bigl(U^{i-j}(c)\bigr)\bigr)\\
& = \frac{p^{(j-i)j}\chi(h)^j}{(j)!_{qp}(i-j)!_{qp}} \Bigl(q^j U^j(b)\alpha^j\bigl(U^{i+1-j}(c)\bigr) + p^{j-i} \chi(h) U^{j+1}(b)\alpha^{j+1}\bigl(U^{i-j}(c)\bigr)\Bigr)
\end{align*}
and
\begin{align*}
U\bigl(A_{i+n}^j&(b,c)\bigr) = \frac{p^{(j-i)j}\chi(h)^j}{(j)!_{qp}(i+n-j)!_{qp}} U\bigl(U^j(b)\alpha^j\bigl(U^{i+n-j}(c)\bigr)\bigr)\\
& = \frac{p^{(j-i)j}\chi(h)^j}{(j)!_{qp}(i+n-j)!_{qp}} \Bigl(q^j U^j(b)\alpha^j\bigl(U^{j+1+n-j}(c)\bigr) +
p^{j-i} \chi(h) U^{j+1}(b)\alpha^{j+1}\bigl(U^{i+n-j}(c)\bigr)\Bigr).
\end{align*}
Since $U^n = 0$, this implies that
\begin{align*}
\sum_{j=0}^i U\bigl(A_i^j(b,c)\bigr) & = \sum_{j=0}^i \frac{p^{(j-i)j}\chi(h)^jq^j} {(j)!_{qp}(i-j)!_{qp}}  U^j(b)\alpha^j\bigl(U^{i+1-j}(c)\bigr)\\
& + \sum_{j=0}^i\frac{p^{(j-i)(j+1)}\chi(h)^{j+1}}{(j)!_{qp}(i-j)!_{qp}} U^{j+1}(b)\alpha^{j+1}\bigl(U^{i-j}(c)\bigr)\\
& = \sum_{j=0}^i \frac{p^{(j-i)j}\chi(h)^jq^j} {(j)!_{qp}(i-j)!_{qp}}  U^j(b)\alpha^j\bigl(U^{i+1-j}(c)\bigr)\\
& + \sum_{j=1}^{i+1}\frac{p^{(j-i-1)j}\chi(h)^j}{(j-1)!_{qp}(i+1-j)!_{qp}} U^j(b)\alpha^j\bigl(U^{i+1-j}(c)\bigr)
\end{align*}
and
\begin{align*}
\sum_{j=i+1}^{n-1} U\bigl(A_{i+n}^j(b,c)\bigr) & = \sum_{j=i+1}^{n-1} \frac{p^{(j-i)j}\chi(h)^jq^j} {(j)!_{qp}(i+n-j)!_{qp}}  U^j(b)\alpha^j\bigl(U^{i+1+n-j}(c)\bigr)\\
& + \sum_{j=i+1}^{n-1}\frac{p^{(j-i)(j+1)}\chi(h)^{j+1}}{(j)!_{qp}(i+n-j)!_{qp}} U^{j+1}(b)\alpha^{j+1}\bigl(U^{i+n-j}(c)\bigr)\\
& = \sum_{j=i+2}^{n-1} \frac{p^{(j-i)j}\chi(h)^jq^j} {(j)!_{qp}(i+n-j)!_{qp}}  U^j(b)\alpha^j\bigl(U^{i+1+n-j}(c)\bigr)\\
& + \sum_{j=i+2}^{n-1}\frac{p^{(j-i-1)j}\chi(h)^j}{(j-1)!_{qp}(i+1+n-j)!_{qp}} U^j(b)\alpha^j\bigl(U^{i+1+n-j}(c)\bigr).
\end{align*}
Consequently in order to finish the proof of equalities~\eqref{pepe1} and~\eqref{pepe2} it suffices to see that
\begin{align*}
& \frac{(i+1)_{qp}} {(i+1)!_{qp}} = \frac{1} {(i)!_{qp}},\\
&\frac{(i+1)_{qp} \chi(h)^{i+1}} {(i+1)!_{qp}} = \frac{ \chi(h)^{i+1}}{(i)!_{qp}},\\
&\frac{(i+1)_{qp} p^{(j-i-1)j}\chi(h)^j} {(j)!_{qp}(i+1-j)!_{qp}} = \frac{p^{(j-i)j}\chi(h)^jq^j} {(j)!_{qp}(i-j)!_{qp}} + \frac{p^{(j-i-1)j} \chi(h)^j}{(j-1)!_{qp}(i+1-j)!_{qp}} &&\text{for $1\le j\le i$}
\intertext{and}
& \frac{(i+1)_{qp} p^{(j-i-1)j}\chi(h)^j} {(j)!_{qp}(i+1+n-j)!_{qp}} = \frac{p^{(j-i)j}\chi(h)^jq^j} {(j)!_{qp}(i+n-j)!_{qp}} + \frac{p^{(j-i-1)j} \chi(h)^j}{(j-1)!_{qp}(i+1+n-j)!_{qp}}&&\text{for $i+2\le j< n$.}
\end{align*}
But the first two equalities are trivial and the last ones are equivalent to
\begin{align*}
&(i+1)_{qp} = p^j q^j(i+1-j)_{qp} + (j)_{qp} &&\text{for $1\le j\le i$}
\intertext{and}
&(i+1)_{qp} = p^j q^j(i+1+n-j)_{qp} + (j)_{qp} &&\text{for $i+2\le j< n$,}
\end{align*}
which can be easily checked.
\end{proof}

\begin{remark} Assume that $q=1$, or equivalently, that $H_{\mathcal{D}}$ is a Krop-Radford Hopf algebra. In this case $(B,s)$ is a standard $H_{\mathcal{D}}$-comodule algebra (that is, $s$ is the flip) iff $B_{g,\zeta}= 0$ for $\zeta \ne \ide$ and $\alpha$ is the identity map. Hence, in order to obtain the standard $H_{\mathcal{D}}$-comodule algebra structures, the conditions that we need verify (given in Theorem~\ref{caracterizacion de HsubD comodule algebras}) are considerably simplified.
\end{remark}

\begin{remark}\label{estructura inducida de kG-modulo algebra} Let $(B,s)$ be a right $H_{\mathcal{D}}$-comodule algebra and let $B_{G}:=\{b\in B: \nu(b)\in B\ot kG\}$. Note that
$$
B_G=\ker(U)= \bigoplus_{(g,\zeta)\in G(\chi,z)^{\op}} B_{g,\zeta}\cap \ker(U).
$$
Moreover,
$$
\nu(B_G)\subseteq B_G\ot kG,
$$
because $(\nu\ot H_{\mathcal{D}})\xcirc \nu=B\ot \Delta)\xcirc \nu$. Consequently, since
$$
(B\ot c_q)\xcirc (s\ot H_{\mathcal{D}})\xcirc (H_{\mathcal{D}}\ot \nu)= (\nu\ot H_{\mathcal{D}})\xcirc s,
$$
we have $s(H_{\mathcal{D}}\ot B_G)\subseteq B_G\ot H_{\mathcal{D}}$. Similarly $s^{-1}(B_G\ot H_{\mathcal{D}})\subseteq H_{\mathcal{D}}\ot B_G$, and so,
$$
s(H_{\mathcal{D}}\ot B_G) = B_G\ot H_{\mathcal{D}}.
$$
Furthermore $B_G$ is a subalgebra of $B$ because
$$
\nu\xcirc \mu = (\mu_B\ot \mu_{H_{\mathcal{D}}})\xcirc (B \ot s \ot H_{\mathcal{D}}) \xcirc (\nu \ot \nu).
$$
Clearly $s$ induce by restriction a left transposition $\tilde{s}$  of $kG$ on $B_G$. From the previous discussion it follows that $(B_G,\tilde{s})$ is a right $kG$-comodule algebra.
\end{remark}

\section{$\bm{H_{\mathcal{D}}}$ cleft extensions} Throughout this section we use freely the notations introduced in Section~\ref{H_D comodule algebras} and the characterization of right $H_{\mathcal{D}}$-comodule algebras obtained in Theorem~\ref{caracterizacion de HsubD comodule algebras}. Let $(B,s)$ be a right $H_{\mathcal{D}}$-comodule algebra and let $C:=B^{\coH_{\mathcal{D}}}$. Recall that, by Corollary~\ref{coinvarianes de un HsubD comodulo},
$$
C=B_{1_G}\cap \ker(U)= \bigoplus_{\zeta\in \Aut_{\chi,z}(G)} B_{1_G,\zeta}\cap \ker(U).
$$

\begin{theorem}\label{caracterizacion de cleft} The extension $(C \hookrightarrow B,s)$ is cleft iff there exists $b_x\in B$ and a family $(b_g)_{g\in G}$ of elements of $B^{\times}$, such that
\begin{enumerate}[label=\emph{(\alph*)}]

\smallskip

\item $b_g\in B_{g,\ide}\cap \ker(U)$ for all $g\in G$,

\smallskip

\item $b_x\in B_{z,\ide}\cap U^{-1}(1_B)$,

\smallskip

\item $\alpha(b_x)=qb_x$,

\smallskip

\item $\alpha(b_g)=b_g$ for all $g\in G$.

\smallskip

\end{enumerate}
If this is the case, then the map $\gamma\colon H_{\mathcal{D}} \to B$, defined by $\gamma(gx^i):=b_gb_x^i$, is a cleft map, and its convolution inverse is given by
$$
\qquad \gamma^{-1}(gx^i) = (-1)^i (qp)^{\frac{i(i-1)}{2}}b_x^ib_{gz^i}^{-1}.
$$
\end{theorem}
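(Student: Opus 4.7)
The plan is to prove the two implications and then verify the explicit formula for $\gamma^{-1}$. For the forward implication, I start with a cleft map $\gamma\colon H_{\mathcal{D}}\to B$ normalized so that $\gamma(1_{H_{\mathcal{D}}})=1_B$ (invoking the remark preceding Lemma~\ref{cae en A}) and set $b_g:=\gamma(g)$ for $g\in G$ and $b_x:=\gamma(x)$. Convolution invertibility of $\gamma$ applied to the grouplike element $g$ forces $b_g\in B^{\times}$. The right $H_{\mathcal{D}}$-colinearity $\nu\xcirc\gamma=(\gamma\ot H_{\mathcal{D}})\xcirc\Delta$, evaluated on $g$ and on $x$, gives $\nu(b_g)=b_g\ot g$ and $\nu(b_x)=1_B\ot x+b_x\ot z$. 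Comparing these with the explicit formula~\eqref{eeeq2} and using the linear independence of the basis $\{gx^i\}$ of $H_{\mathcal{D}}$ extracts $b_g\in B_g\cap\ker(U)$, $b_x\in B_z$ and $U(b_x)=1_B$; the identities $U^i(b_x)=0$ for $i\ge 2$ reduce to $U(1_B)=0$, which itself follows from $\nu(1_B)=1_B\ot 1_{H_{\mathcal{D}}}$ and the same linear independence argument applied to equation~\eqref{eeeq2}.

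Next I would exploit the braid compatibility $(\gamma\ot H_{\mathcal{D}})\xcirc c_q=s\xcirc(H_{\mathcal{D}}\ot\gamma)$. Evaluating it on $h\ot g$ and $h\ot x$ for each $h\in G$ yields $s(h\ot b_g)=b_g\ot h$ and $s(h\ot b_x)=b_x\ot h$; combining this with Proposition~\ref{estructura de transposiciones de HsubD} and the linear independence of the direct summands $B_{\zeta}$ forces $b_g,b_x\in B_{\ide}$, whence $b_g\in B_{g,\ide}$ and $b_x\in B_{z,\ide}$. Evaluating the same identity on $x\ot g$ (where $c_q(x\ot g)=g\ot x$) and on $x\ot x$ (where $c_q(x\ot x)=q\,x\ot x$) produces $\alpha(b_g)=b_g$ and $\alpha(b_x)=qb_x$, completing (a)--(d).

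For the converse, given $b_x$ and $(b_g)_{g\in G}$ satisfying (a)--(d) I define $\gamma(gx^i):=b_gb_x^i$ and check that it is a cleft map. Since $\nu\colon B\to B\ot_s H_{\mathcal{D}}$ is an algebra map, (a) and (b) give $\nu(b_g)=b_g\ot g$ and $\nu(b_x)=1_B\ot x+b_x\ot z$, while (c), (d) and $b_g,b_x\in B_{\ide}$ produce the twisted commutation $(1_B\ot x)(b_x\ot z)=qp\,(b_x\ot z)(1_B\ot x)$ in $B\ot_s H_{\mathcal{D}}$. The $q$-binomial formula~\eqref{eq12} then expands $\nu(b_x)^i$, and left multiplication by $\nu(b_g)=b_g\ot g$ yields $\nu(b_gb_x^i)=\sum_{j=0}^i\binom{i}{j}_{qp}b_gb_x^j\ot gz^jx^{i-j}$, which is exactly $(\gamma\ot H_{\mathcal{D}})\xcirc\Delta(gx^i)$ by~\eqref{comultiplication}. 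The braid identity for $\gamma$ follows by a parallel direct computation, again exploiting (c), (d) and $b_g,b_x\in B_{\ide}$.

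Finally, to verify that the proposed formula for $\gamma^{-1}$ defines the convolution inverse, I compute $(\gamma*\gamma^{-1})(gx^i)$ and $(\gamma^{-1}*\gamma)(gx^i)$ via~\eqref{comultiplication}. The factorizations $b_gb_x^jb_x^{i-j}b_{gz^i}^{-1}=b_gb_x^ib_{gz^i}^{-1}$ and $b_x^jb_{gz^j}^{-1}b_{gz^j}b_x^{i-j}=b_x^i$ telescope, reducing (after the substitution $j\mapsto i-j$ in the first case, together with the symmetry $\binom{i}{j}_{qp}=\binom{i}{i-j}_{qp}$) both convolutions to the scalar identity
\begin{equation*}
\sum_{j=0}^i(-1)^j(qp)^{j(j-1)/2}\binom{i}{j}_{qp}=\delta_{i,0},
\end{equation*}
which is precisely the identity verified in Corollary~\ref{algebras de Hopf trenzadas de K-R} to establish that $S$ is the antipode of $H_{\mathcal{D}}$. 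The main technical obstacle throughout is the bookkeeping around these $q$-binomial manipulations and the careful disentangling of the $G$-grading (visible through $U$ and $\nu$) from the $\Aut_{\chi,z}(G)^{\op}$-grading (visible through $s$ and $\alpha$); once the two gradings are kept separate the identities fall out cleanly.
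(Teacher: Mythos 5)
Your proposal is correct and follows essentially the same route as the paper: conditions (a)--(d) are extracted from colinearity (compared against the coaction formula~\eqref{eeeq2}) and from the braid compatibility evaluated on $h\ot g$, $h\ot x$, $x\ot g$, $x\ot x$, and the converse plus the formula for $\gamma^{-1}$ are verified via the alternating $q$-binomial identity~\eqref{formula}. The only cosmetic difference is that you package the colinearity check for $\nu(b_x^i)$ through the $q$-binomial formula~\eqref{eq12} applied to the commutation $(1_B\ot x)(b_x\ot z)=qp\,(b_x\ot z)(1_B\ot x)$ in $B\ot_s H_{\mathcal{D}}$, where the paper runs the equivalent induction on $i$ explicitly.
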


\begin{proof} Assume that $(C \hookrightarrow B,s)$ is a cleft extension and fix a cleft map $\gamma \colon H_{\mathcal{D}} \to B$ such that $\gamma(1)=1$. For every $g\in G$ and $0\le i< n$, set $b_{gx^i}:= \gamma(gx^i)$. Since $\gamma$ is a right comodule map,
$$
\nu(b_g)=b_g\ot g \quad\text{and}\quad \nu(b_x)=1_B\ot x + b_x\ot z,
$$
which, by formula~\eqref{eeeq2}, is equivalent to
$$
b_g\in B_g \cap \ker(U)\quad\text{and}\qquad b_x\in B_z\cap U^{-1}(1).
$$
Moreover $b_g$ is invertible for each $g\in G$, because $\gamma$ is convolution invertible. On the other hand evaluating the equality $(\gamma\ot H_{\mathcal{D}})\xcirc c_q = s\xcirc (H_{\mathcal{D}}\ot \gamma)$ in $h\ot x$, $x\ot x$, $h\ot g$ and $x\ot g$, where $h\in G$ is arbitrary, we obtain that
$$
b_x\in B_{\ide},\quad \alpha(b_x)=qb_x,\quad  b_g \in B_{\ide}\quad\text{and}\quad \alpha(b_g)=b_g,
$$
for all $g\in G$. Thus, items~\mbox{(a)--(d)} hold. Conversely, assume that there exists $b_x\in B$ and a family $(b_g)_{g\in G}$ of elements of $B^{\times}$ satisfying statements~(a)--(d). We are going to prove that $(C \hookrightarrow B,s)$ is cleft and the map $\gamma\colon H_{\mathcal{D}} \to B$, defined by $\gamma(gx^i):=b_gb_x^i$, is a cleft map. First note that
$$
(\gamma\ot H_{\mathcal{D}})\xcirc c_q (gx^i\ot hx^j) = q^{ij} b_hb_x^j\ot gx^i= s(gx^i\ot b_hb_x^j)= s\xcirc (H_{\mathcal{D}}\ot \gamma) (gx^i\ot hx^j),
$$
for all $h,g\in G$ and $0\le i,j<n$. So we must only check that $\gamma$ is convolution invertible and
\begin{equation}\label{eeeq3}
\nu\xcirc \gamma(gx^i)= (\gamma\ot H_{\mathcal{D}})\xcirc \Delta(gx^i) \quad\text{for all $g\in G$ and $0\le i < n$}.
\end{equation}
For $g=1$ and $i=0$  it is evident that this is true. Assume it is true for $g=1$ and $i=i_0$, and that $i_0<n-1$. Then
\begin{align*}
\nu\xcirc \gamma(x^{i_0+1}) &=\nu\bigl(\gamma(x^{i_0})  b_x\bigr) \\
&=(\mu_{B}\ot \mu_{H_{\mathcal{D}}})\xcirc (B\ot s \ot H_{\mathcal{D}})\bigl(\nu(\gamma(x^{i_0})) \ot \nu(b_x)\bigr)\\
&= \sum_{j=0}^{i_0}\binom{i_0}{j}_{qp}(\mu_{B}\ot \mu_{H_{\mathcal{D}}})\xcirc (B\ot s \ot H_{\mathcal{D}})(b_x^j\ot z^jx^{i_0-j}\ot 1_B\ot x)\\
&+ \sum_{j=0}^{i_0}\binom{i_0}{j}_{qp}(\mu_{B}\ot \mu_{H_{\mathcal{D}}})\xcirc (B\ot s \ot H_{\mathcal{D}})(b_x^j\ot z^jx^{i_0-j}\ot b_x\ot z)\\
&= \sum_{j=0}^{i_0}\binom{i_0}{j}_{qp} b_x^j\ot z^jx^{i_0+1-j} + \sum_{j=0}^{i_0}\binom{i_0}{j}_{qp} b_x^j \alpha^{i_0-j}(b_x)\ot z^jx^{i_0-j}z\\
&= \sum_{j=0}^{i_0}\binom{i_0}{j}_{qp} b_x^j\ot z^jx^{i_0+1-j} + \sum_{j=1}^{i_0+1}\binom{i_0}{j-1}_{qp} q^{i_0+1-j}p^{i_0+1-j}b_x^j \ot z^jx^{i_0+1-j}\\
&=\sum_{j=0}^{i_0}\binom{i_0+1}{j}_{qp} b_x^j\ot z^jx^{i_0+1-j}.
\end{align*}
So, equality~\eqref{eeeq3} holds when $g=1_G$. But then
\begin{align*}
\nu\xcirc \gamma(gx^i) &= \nu\bigl(b_g\gamma(x^i)\bigr)\\
&= \sum_{j=0}^i\binom{i}{j}_{qp} (\mu_B\ot \mu_{H_{\mathcal{D}}})\xcirc (B\ot s\ot H_{\mathcal{D}})(b_g\ot g \ot b_x^j \ot z^jx^{i-j})\\
&= \sum_{j=0}^i\binom{i}{j}_{qp} b_gb_x^j\ot gz^jx^{i-j}.
\end{align*}
It remains to check that $\gamma$ is convolution invertible. As was noted in \cite{D-T}*{Section 3},
\begin{equation}
\sum_{j=0}^i (-1)^j(qp)^{\frac{j(j-1)}{2}}\binom{i}{j}_{qp}=\begin{cases} 1 & \text{if $i=0$,}\\ 0 &\text{if $0<i<n$.}\end{cases} \label{formula}
\end{equation}
Using this it is easy to prove that $\gamma$ is invertible with
$$
\gamma^{-1}(gx^i) = (-1)^i (qp)^{\frac{i(i-1)}{2}}b_x^ib_{gz^i}^{-1},
$$
which finishes the proof.
\end{proof}

In the previous theorem we can assume without loose of generality that $b_1=1_B$.

\begin{theorem}\label{algunas propiedades de las extensiones cleft} Assume that $(C \hookrightarrow B,s)$ is cleft. Take $b_x\in B$ and a family $(b_g)_{g\in G}$ of elements of $B^{\times}$ with $b_1=1_B$, in such a way that conditions~(a)--(d) of Theorem~\ref{caracterizacion de cleft} are fulfilled. Then

\begin{enumerate}

\smallskip

\item $B$ is a free left $C$-module with basis $\{b_gb_x^i : g\in G \text{ and } 0\le i< n\}$.

\smallskip

\item Set $\mathfrak{b}:=b_x^nb_z^{|z|-n}$ and for all $g\in G$ set $\mathfrak{a}_g:=b_g^{|g|}$ and $\mathfrak{c}_g:= (b_xb_g- \chi(g) b_gb_x)b_g^{-1}b_z^{-1}$. Then $\mathfrak{a_g}\in C^{\times}$, $\mathfrak{c}_g\in C$, and if $x^n=0$, then $\mathfrak{b}\in C$.

\smallskip

\item The weak action of $H_{\mathcal{D}}$ on $C$ associated with $\gamma$ according to item~(5) of Theorem~\ref{equiv entre cleft, H-Galois con base normal e isomorfo a un producto cruzado}, is given by
$$
\qquad\quad gx^i \rightharpoonup c=\sum_{j=0}^i (-1)^j(qp)^{\frac{j(j-1)}{2}}\binom{i}{j}_{qp} b_gb_x^{i-j} \alpha^j(c)b_x^j  b_{\zeta(g)z^i}^{-1} \quad\text{for $c\in B_{1_G,\zeta}\cap \ker(U)$.}
$$

\smallskip

\item The two cocycle $\sigma\colon H_{\mathcal{D}}\ot H_{\mathcal{D}} \to C$ associated with $\gamma$ according to item~(5) of Theorem~\ref{equiv entre cleft, H-Galois con base normal e isomorfo a un producto cruzado}, is given by
\begin{align*}
\qquad\quad\,\,\, \sigma(gx^s\ot hx^r) & = \sum_{\substack{0\le i\le s \\ 0\le j\le r \\ \xi_{ij}<n}}\! (-1)^{\xi_{ij}} \binom{s}{i}_{qp}\binom{r}{j}_{qp} (qp)^{\frac{\xi_{ij}(\xi_{ij}-1)}{2}+sj-ij}\chi(h)^{s-i} b_gb_x^i b_h b_x^{s+r-i} b_{ghz^{s+r}}^{-1}\\
& + \lambda \!\!\sum_{\substack{0\le i\le s \\ 0\le j\le r \\ \xi_{ij}\ge n}}\! (-1)^{\xi'_{ij}} \binom{s}{i}_{qp}\binom{r}{j}_{qp} (qp)^{\frac{\xi'_{ij}(\xi'_{ij}-1)}{2}+sj-ij}\chi(h)^{s-i} b_gb_x^i b_h b_x^{\xi_{in}}b_{ghz^{s+r}}^{-1}\\
& - \lambda\!\!  \sum_{\substack{0\le i\le s \\ 0\le j\le r \\ \xi_{ij}\ge n}}\! (-1)^{\xi'_{ij}}\binom{s}{i}_{qp}\binom{r}{j}_{qp} (qp)^{\frac{\xi'_{ij}(\xi'_{ij}-1)}{2}+sj-ij}\chi(h)^{s-i} b_gb_x^i b_h b_x^{\xi_{in}} b_{ghz^{s+r-n}}^{-1},
\end{align*}
where $\xi_{ij}:=s+r-i-j$ and $\xi'_{ij}:=\xi_{ij}-n$.
\end{enumerate}
\end{theorem}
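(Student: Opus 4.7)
The plan is to apply items~(4) and~(5) of Theorem~\ref{equiv entre cleft, H-Galois con base normal e isomorfo a un producto cruzado} to the cleft map $\gamma(gx^i):=b_g b_x^i$ provided by Theorem~\ref{caracterizacion de cleft}, making systematic use of the $G(\chi,z)^{\op}$-gradation on $B$ and the skew-derivation identity~\eqref{eqrankone8*}. Item~(1) is immediate from item~(4) of Theorem~\ref{equiv entre cleft, H-Galois con base normal e isomorfo a un producto cruzado}: the map $\phi(c\ot h):=c\gamma(h)$ is a left $C$-linear isomorphism $C\ot H_{\mathcal{D}}\to B$, so the images $\gamma(gx^i)=b_g b_x^i$ of the canonical basis of $H_{\mathcal{D}}$ form a free left $C$-basis of $B$.

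For item~(2), I would treat each element separately. A direct induction on $k$ via~\eqref{eqrankone8*} shows that $b_g\in\ker(U)\cap B_{g,\ide}$ forces $b_g^k\in\ker(U)\cap B_{g^k,\ide}$, giving $\mathfrak{a}_g\in B_{1_G,\ide}\cap\ker(U)=C$; and $\mathfrak{a}_g^{-1}\in C$ because $\nu$ is an algebra map into $B\ot_s H_{\mathcal{D}}$ with $s(1_{H_{\mathcal{D}}}\ot c)=c\ot 1_{H_{\mathcal{D}}}$, so $\nu(\mathfrak{a}_g)=\mathfrak{a}_g\ot 1$ forces $\nu(\mathfrak{a}_g^{-1})=\mathfrak{a}_g^{-1}\ot 1$. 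For $\mathfrak{b}$, the hypothesis $x^n=0$ makes~\eqref{eeqq1} a genuine algebra grading, so $b_x^n b_z^{|z|-n}\in B_{z^{|z|},\ide}=B_{1_G,\ide}$; an induction from $U(b_x)=1_B$ via~\eqref{eqrankone8*} yields $U(b_x^i)=(i)_{qp}b_x^{i-1}$, and since $qp$ has order $n$ one has $(n)_{qp}=0$, hence $U(b_x^n)=0$, while $U(b_z^{-1})=0$ follows from $0=U(b_z b_z^{-1})=b_z U(b_z^{-1})$. For $\mathfrak{c}_g$, the identities $U(b_x b_g)=\chi(g)b_g$ and $U(b_g b_x)=b_g$ (both from~\eqref{eqrankone8*}) give $U(b_x b_g-\chi(g)b_g b_x)=0$; both products land in $B_{zg,\ide}$ since the extra component~\eqref{compa} vanishes (every term in that sum contains either $U(b_g)=0$ or $U^{\ge 2}(b_x)=0$), so $\mathfrak{c}_g\in B_{1_G,\ide}\cap\ker(U)=C$.

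For items~(3) and~(4) I would substitute $\gamma$ and $\gamma^{-1}(fx^l)=(-1)^l(qp)^{l(l-1)/2}b_x^l b_{fz^l}^{-1}$ into the formulas~\eqref{ecua1} and~\eqref{ecua2}, expanding via the comultiplication~\eqref{comultiplication}, the transposition~\eqref{eq2}, and the braid~\eqref{def braid}. For~(3), with $c\in B_{1_G,\zeta}\cap\ker(U)$ one has $s_A(gz^j x^{i-j}\ot c)=\alpha^{i-j}(c)\ot \zeta(g)z^j x^{i-j}$; inserting $\gamma$ and $\gamma^{-1}$ gives
$$
\rho(gx^i\ot c)=\sum_{j=0}^i(-1)^{i-j}(qp)^{\frac{(i-j)(i-j-1)}{2}}\binom{i}{j}_{qp} b_g b_x^j \alpha^{i-j}(c) b_x^{i-j} b_{\zeta(g)z^i}^{-1},
$$
and the reindexing $j\mapsto i-j$ (using $\binom{i}{i-j}_{qp}=\binom{i}{j}_{qp}$) recovers the stated formula. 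For~(4), one computes
$$
\Delta_{H\ot^c H}(gx^s\ot hx^r)=\sum_{i,j}\binom{s}{i}_{qp}\binom{r}{j}_{qp}q^{(s-i)j}\,gx^i\ot hx^j\ot gz^i x^{s-i}\ot hz^j x^{r-j},
$$
whose last two tensor factors multiply in $H_{\mathcal{D}}$ to $\chi(h)^{s-i}p^{j(s-i)}ghz^{i+j}x^{\xi_{ij}}$ when $\xi_{ij}:=s+r-i-j<n$, and to $\chi(h)^{s-i}p^{j(s-i)}\lambda\,gh(z^{s+r}-z^{s+r-n})x^{\xi'_{ij}}$ when $\xi_{ij}\ge n$, via $x^n=\lambda(z^n-1_G)$. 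Applying $\gamma\ot\gamma$ on the first two factors and $\gamma^{-1}$ on the resulting last factor then produces the three displayed sums.

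The hard part will be the bookkeeping in~(4): collecting the cumulative exponents of $p$ and $q$ coming from the braid $c_q$, from $\gamma^{-1}$, and from the $\chi$-commutation of $x$ past group elements, and verifying that they combine into the factor $(qp)^{\frac{\xi_{ij}(\xi_{ij}-1)}{2}+sj-ij}$ (respectively with $\xi'_{ij}$ when $\xi_{ij}\ge n$); and identifying the two subscripts $ghz^{s+r}$ and $ghz^{s+r-n}$ of $b_{(\cdot)}^{-1}$ as coming from the $z^n$ and $-1_G$ summands of $\lambda(z^n-1_G)$ with opposite signs.
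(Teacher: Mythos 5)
Your proposal is correct and follows essentially the same route as the paper: item~(1) from the normal basis $\phi(c\ot y)=c\gamma(y)$ of Theorem~\ref{equiv entre cleft, H-Galois con base normal e isomorfo a un producto cruzado}(4), and items~(3)--(4) by substituting $\gamma$ and $\gamma^{-1}$ into~\eqref{ecua1} and~\eqref{ecua2} and expanding via~\eqref{comultiplication}, the transposition and the braid, exactly as the paper indicates. The only (minor) divergence is in item~(2), where the paper checks coinvariance by computing $\nu$ of each element directly from the multiplicativity of $\nu\colon B\to B\ot_s H_{\mathcal{D}}$ (e.g.\ $\nu(b_x^i)=\sum_j\binom{i}{j}_{qp}b_x^j\ot z^jx^{i-j}$ and $\nu(b_g^{-1})=b_g^{-1}\ot g^{-1}$), whereas you use the equivalent $U$/gradation dictionary of Theorem~\ref{caracterizacion de HsubD comodule algebras} via~\eqref{eqrankone8*} and~\eqref{compa} -- the same argument in a different, slightly more computational presentation.
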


\begin{proof} (1)\enspace By item~(4) of Theorem~\ref{equiv entre cleft, H-Galois con base normal e isomorfo a un producto cruzado}, the map $\phi\colon C\ot H_{\mathcal{D}}\to B$, given by $\phi(c\ot y):= c\gamma(y)$, is a normal basis. Item~(1) is an immediately consequence of this fact.

\smallskip

\noindent (2)\enspace Using item~(4) of Remark~\ref{re: H-braided comod alg} it is easy to check by induction on $i$, that
\begin{equation}
\nu(b_g^i)= b_g^i\ot g^i\quad\text{and}\quad \nu(b_x^i)=\sum_{j=0}^i \binom{i}{j}_{qp} b_x^j\ot z^jx^{i-j}\quad\text{for all $g\in G$ and $i\ge 0$}.\label{pa}
\end{equation}
By the first equality
$$
\nu(\mathfrak{a}_g)=\mathfrak{a}_g \ot 1_{H_{\mathcal{D}}} \quad\text{for all $g\in G$},
$$
and so $\mathfrak{a}_g\in C$. Since $\nu\colon B\to B\ot_s H$ is an algebra map, we have
\begin{equation}
\nu(\mathfrak{a}_g^{-1})=\mathfrak{a}_g^{-1} \ot 1_{H_{\mathcal{D}}}\quad\text{and}\quad \nu(b_g^{-1}) = b_g^{-1}\ot g^{-1} \quad\text{for all $g\in G$,}\label{pa1}
\end{equation}
which implies in particular that $\mathfrak{a}_g\in C^{\times}$. Note also that, by the second equality in~\eqref{pa},
$$
\nu(b_x^n)= 1_B\ot x^n+ b_x^n \ot z^n.
$$
because $\binom{n}{j}_{qp}=0$ for $0<j<n$. Consequently,
\begin{align*}
\nu(\mathfrak{b}) & = \nu(b_x^nb_z^{|z|-n})\\
& =(\mu_B\ot \mu_{H_{\mathcal{D}}})\xcirc (B\ot s\ot  H_{\mathcal{D}})(1_B\ot x^n\ot b_z^{|z|-n}\ot z^{-n} + b_x^n \ot z^n\ot b_z^{|z|-n}\ot z^{-n})\\
& = (\mu_B\ot \mu_{H_{\mathcal{D}}})(1_B\ot b_z^{|z|-n} \ot x^n \ot z^{-n} + b_x^n \ot b_z^{|z|-n}\ot z^n\ot z^{-n})\\
& = b_z^{|z|-n} \ot x^nz^{-n} + \mathfrak{b}\ot 1_{H_{\mathcal{D}}},
\end{align*}
which implies that, if $x^n = 0$, then $\mathfrak{b}\in C$. Furthermore, for all $g\in G$,
\begin{align*}
&\nu(b_xb_g) = (\mu_B\ot \mu_{H_{\mathcal{D}}})\xcirc (B\ot s\ot  H_{\mathcal{D}})(1_B\ot x \ot b_g \ot g + b_x\ot z \ot b_g \ot g)\\
&\phantom{\nu(b_xb_g)} = (\mu_B\ot \mu_{H_{\mathcal{D}}})(1_B\ot b_g \ot x \ot g + b_x\ot b_g \ot z \ot g)\\
&\phantom{\nu(b_xb_g)}= \chi(g)\,b_g\ot gx + b_xb_g\ot zg
\shortintertext{and}
&\nu(\chi(g)\,b_gb_x)= (\mu_B\ot \mu_{H_{\mathcal{D}}})\xcirc (B\ot s\ot  H_{\mathcal{D}})(\chi(g)\,b_g\ot g\ot 1_B\ot x + \chi(g)\,b_g\ot g\ot b_x\ot z)\\
&\phantom{\nu(\chi(g)\,b_xb_g)}=(\mu_B\ot \mu_{H_{\mathcal{D}}})(\chi(g)\,b_g\ot 1_B\ot g\ot x + \chi(g)\,b_g\ot b_x\ot g\ot z)\\
&\phantom{\nu(\chi(g)\,b_xb_g)}=\chi(g)\,b_g\ot gx + \chi(g)\,b_gb_x\ot zg.
\end{align*}
Combining this with the second equality in~\eqref{pa1}, we obtain
\begin{align*}
\nu(\mathfrak{c}_g) & = \nu\bigl((b_xb_g-\chi(g)\,b_gb_x)b_g^{-1}b_z^{-1}\bigr)\\
& = (\mu_B\ot \mu_{H_{\mathcal{D}}})\xcirc (B\ot s\ot  H_{\mathcal{D}})\bigl((b_xb_g -\chi(g)\,b_gb_x)\ot zg \ot b_g^{-1}b_z^{-1}\ot g^{-1}z^{-1})\\
& = (\mu_B\ot \mu_{H_{\mathcal{D}}})\bigl((b_xb_g -\chi(g)\,b_gb_x)\ot b_g^{-1}b_z^{-1}\ot zg \ot g^{-1}z^{-1})\\
& = (b_xb_g -\chi(g)\,b_gb_x)b_g^{-1}b_z^{-1}\ot 1_{H_{\mathcal{D}}},
\end{align*}
and so $\mathfrak{c}_g\in C$, as desired.

\smallskip

\noindent (3)\enspace This follows by a direct computation from item~(5) of Theorem~\ref{equiv entre cleft, H-Galois con base normal e isomorfo a un producto cruzado}, using equalities~\eqref{comultiplication} and~\eqref{eqrankone1}, and the formulas for $\gamma$ and $\gamma^{-1}$ that appears in Theorem~\ref{caracterizacion de cleft}.

\smallskip

\noindent (4)\enspace This follows by a direct computation from item~(5) of Theorem~\ref{equiv entre cleft, H-Galois con base normal e isomorfo a un producto cruzado}, using equalities~\eqref{comultiplication} and~\eqref{def braid}, and the formulas for $\gamma$ and $\gamma^{-1}$ that appears in Theorem~\ref{caracterizacion de cleft}.
\end{proof}

The Proposition~\ref{simplificacion} below is useful to simplify the computation of the first sum in the right side of the equality in Theorem~\ref{algunas propiedades de las extensiones cleft}(4).

\begin{lemma} With the notations of the previous result, we have
$$
\sum_{j=0}^r (-1)^{\xi_{ij}} \binom{r}{j}_{qp} (qp)^{\frac{\xi_{ij}(\xi_{ij}-1)}{2}+sj-ij} = \begin{cases} (-1)^{s-i}(qp)^{\frac{(s-i)(s-i-1)}{2}} & \text{if $r=0$,}\\ 0 &\text{if $0<r<n$.}\end{cases}
$$
\end{lemma}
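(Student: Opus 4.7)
The plan is to reduce the sum to a direct application of identity~\eqref{formula} via the change of variables $k := r - j$. Under this substitution we have $\xi_{ij} = s - i + k$ and $sj - ij = (s-i)(r-k)$, and $\binom{r}{j}_{qp} = \binom{r}{r-k}_{qp} = \binom{r}{k}_{qp}$, so the sum becomes
\begin{equation*}
\sum_{k=0}^r (-1)^{s-i+k} \binom{r}{k}_{qp} (qp)^{\frac{(s-i+k)(s-i+k-1)}{2} + (s-i)(r-k)}.
\end{equation*}

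The key algebraic step is to verify the identity
\begin{equation*}
\frac{(a+k)(a+k-1)}{2} + a(r-k) = \frac{a(a-1)}{2} + ar + \frac{k(k-1)}{2}, \qquad a := s-i,
\end{equation*}
which is a routine expansion. After pulling out the factor $(-1)^{s-i} (qp)^{\frac{(s-i)(s-i-1)}{2} + (s-i)r}$, the sum takes the form
\begin{equation*}
(-1)^{s-i} (qp)^{\frac{(s-i)(s-i-1)}{2} + (s-i)r} \sum_{k=0}^r (-1)^{k} \binom{r}{k}_{qp} (qp)^{\frac{k(k-1)}{2}}.
\end{equation*}

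At this point formula~\eqref{formula} applies directly: the inner sum equals $1$ if $r = 0$ and $0$ if $0 < r < n$. In the first case the prefactor $(qp)^{(s-i)r}$ is $1$, yielding exactly $(-1)^{s-i}(qp)^{\frac{(s-i)(s-i-1)}{2}}$, and in the second case the whole expression vanishes. There is no real obstacle here; the only point requiring care is bookkeeping the exponent of $qp$, which I would carry out by introducing the abbreviation $a = s-i$ as above.
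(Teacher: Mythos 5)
Your proof is correct and follows essentially the same route as the paper's: the paper makes the identical change of variables (it writes $a:=s-i$, $b:=r-j$), uses the symmetry $\binom{r}{j}_{qp}=\binom{r}{r-j}_{qp}$, factors out $(-1)^a(qp)^{ar+\frac{a(a-1)}{2}}$ via the same exponent identity, and concludes by the displayed formula~\eqref{formula}. No issues.
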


\begin{proof} Let $a:= s-i$ and $b:=r-j$. Since $\xi_{ij} = a+b$, we have
\begin{align*}
\sum_{j=0}^r (-1)^{\xi_{ij}} \binom{r}{j}_{qp} (qp)^{\frac{\xi_{ij}(\xi_{ij}-1)}{2}+sj-ij}& = \sum_{b=0}^r (-1)^{a+b} \binom{r}{b}_{qp} (qp)^{\frac{(a+b)(a+b-1)}{2}+ar-ab}\\
& = (-1)^a (qp)^{ar + \frac{a(a-1)}{2}} \sum_{b=0}^r (-1)^b \binom{r}{b}_{qp} (qp)^{\frac{b(b-1)}{2}},
\end{align*}
which combined with~\eqref{formula} gives the desired result.
\end{proof}

\begin{proposition}\label{simplificacion} Let $r,s,i\ge 0$ with $0\le i\le s$. The following assertions holds

\begin{enumerate}

\smallskip

\item If $r = 0$, then
$$
\qquad \sum_{\substack{0\le j\le r \\ \xi_{ij}<n}} (-1)^{\xi_{ij}} \binom{r}{j}_{qp} (qp)^{\frac{\xi_{ij}(\xi_{ij}-1)}{2}+sj-ij} = (-1)^{s-i}(qp)^{\frac{(s-i)(s-i-1)}{2}}.
$$

\smallskip

\item If $r > 0$, then
$$
\qquad \sum_{\substack{0\le j\le r \\ \xi_{ij}<n}} (-1)^{\xi_{ij}} \binom{r}{j}_{qp} (qp)^{\frac{\xi_{ij}(\xi_{ij}-1)}{2}+sj-ij} = \sum_{\substack{0\le j\le r \\ \xi_{ij}\ge n}} (-1)^{\xi_{ij}+1} \binom{r}{j}_{qp} (qp)^{\frac{\xi_{ij}(\xi_{ij}-1)}{2}+sj-ij},
$$
where $\xi_{ij}:=s+r-i-j$
\end{enumerate}

\end{proposition}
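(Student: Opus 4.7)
The plan is to derive both statements as immediate consequences of the preceding lemma, by isolating the terms of its sum according to whether $\xi_{ij} < n$ or $\xi_{ij} \ge n$.

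For item~(1), when $r = 0$ the index $j$ only takes the value $j = 0$, so $\xi_{i0} = s - i$. Since $0 \le i \le s$ and $s < n$ (as $s$ is an exponent appearing in the expression $\sigma(gx^s\ot hx^r)$ in Theorem~\ref{algunas propiedades de las extensiones cleft}(4)), the constraint $\xi_{i0} < n$ is automatically fulfilled. Thus the left hand side collapses to the single term
$$
(-1)^{s-i}\binom{0}{0}_{qp}(qp)^{\frac{(s-i)(s-i-1)}{2}} = (-1)^{s-i}(qp)^{\frac{(s-i)(s-i-1)}{2}},
$$
as claimed.

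For item~(2), I would split the sum in the preceding lemma according to whether $\xi_{ij} < n$ or $\xi_{ij} \ge n$, to obtain
$$
\sum_{\substack{0\le j\le r \\ \xi_{ij}<n}} (-1)^{\xi_{ij}} \binom{r}{j}_{qp} (qp)^{\frac{\xi_{ij}(\xi_{ij}-1)}{2}+sj-ij} + \sum_{\substack{0\le j\le r \\ \xi_{ij}\ge n}} (-1)^{\xi_{ij}} \binom{r}{j}_{qp} (qp)^{\frac{\xi_{ij}(\xi_{ij}-1)}{2}+sj-ij} = 0,
$$
where the right hand side is $0$ because of the lemma (using $0 < r < n$). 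Transposing the second sum and flipping the sign then gives exactly the identity asserted in item~(2).

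There is essentially no obstacle: the only subtle point is checking that the range hypotheses $s < n$ and $r < n$ (inherited from the context in which $s,r$ are exponents of $x$) are in force so that the lemma applies and the $r = 0$ case truly reduces to a single contributing term.
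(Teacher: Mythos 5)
Your proposal is correct and is essentially identical to the paper's proof, which simply reads ``By the previous lemma'': item (1) is the lemma's $r=0$ case once one notes that $\xi_{i0}=s-i<n$, and item (2) is the lemma's vanishing statement for $0<r<n$ with the sum split according to $\xi_{ij}<n$ versus $\xi_{ij}\ge n$. Your remark that the implicit bounds $s<n$ and $r<n$ must be in force (as they are in the intended application to Theorem~\ref{algunas propiedades de las extensiones cleft}(4)) is a fair observation about a hypothesis the paper leaves tacit.
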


\begin{proof} By the previous lemma.
\end{proof}

\begin{corollary} Let $r,s,i\ge 0$ with $0\le i\le s$. If $0<r< n-s+i$, then
$$
\sum_{\substack{0\le j\le r \\ \xi_{ij}<n}} (-1)^{\xi_{ij}} \binom{r}{j}_{qp} (qp)^{\frac{\xi_{ij}(\xi_{ij}-1)}{2}+sj-ij} = 0.
$$
\end{corollary}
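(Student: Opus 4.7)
The plan is to reduce to the lemma stated immediately before Proposition~\ref{simplificacion}, by observing that under the hypothesis $r < n - s + i$ the constraint $\xi_{ij} < n$ in the index set of the sum is automatic. First I would note that, since $i \le s$, we have $n - s + i \le n$, so in particular $0 < r < n$, putting us in the regime covered by the second case of the lemma. Next I would observe that, for every $j$ in the range $0 \le j \le r$,
$$
\xi_{ij} = s + r - i - j \le s + r - i < n,
$$
where the last inequality is precisely the hypothesis $r < n - s + i$. Hence the side condition $\xi_{ij} < n$ appearing under the summation sign is satisfied for every $j\in\{0,1,\dots,r\}$.

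Consequently the restricted sum coincides with the unrestricted sum
$$
\sum_{j=0}^{r} (-1)^{\xi_{ij}} \binom{r}{j}_{qp} (qp)^{\frac{\xi_{ij}(\xi_{ij}-1)}{2}+sj-ij},
$$
and the lemma above applies to give the value $0$, since $0 < r < n$. This finishes the proof; the only substantive observation is the trivial bound $\xi_{ij} \le s + r - i$, so no further computation is required and there is no real obstacle to overcome.
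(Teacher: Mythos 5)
Your proof is correct and follows essentially the same route as the paper: the key observation in both cases is that the hypothesis $r<n-s+i$ forces $\xi_{ij}=s+r-i-j\le s+r-i<n$ for every $j$ in range, so the restricted sum is the full sum, which vanishes by the identity in the preceding lemma. The paper merely cites Proposition~\ref{simplificacion}(2) (whose proof is itself just that lemma) instead of the lemma directly, so the difference is only in which intermediate statement is invoked.
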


\begin{proof} By Proposition~\ref{simplificacion}.
\end{proof}

\section{Examples} In this section we consider two examples of the braided Hopf algebras $H_{\mathcal{D}}$ defined in Corollary~\ref{algebras de Hopf trenzadas de K-R} and we apply the results obtained in the previous section in order to determine their cleft extensions.

\subsection{First example} Consider the datum $\mathcal{D}=(C_2\times C_2\times C_2,\chi,z,\lambda,q)$, where:

\begin{itemize}

\smallskip

\item[-] $C_2=\{1,g\}$ is the multiplicative group of order $2$,

\smallskip

\item[-]  $\chi\colon C_2\times C_2\times C_2\longrightarrow \mathds{C}$ is the character given by $\chi(g^{i_1},g^{i_2},g^{i_3}):= (-1)^{i_1+i_2+i_3}$,

\smallskip

\item[-] $z:=(g,g,g)$,

\smallskip

\item[-] $q=1$ and $\lambda=1$.

\smallskip

\end{itemize}
In this case $p:=\chi(z)=-1$, $n=2$ and the Hopf braided $\mathds{C}$-algebra $H_{\mathcal{D}}$ of Corollary~\ref{algebras de Hopf trenzadas de K-R} is the $\mathds{C}$-algebra generated by the group $G:= C_2\times C_2\times C_2$ and an element $x$ subject to the relations
$$
x^2=z^2-1_G=0\quad \text{and}\quad x(g^{i_1},g^{i_2},g^{i_3})=(-1)^{i_1+i_2+i_3}(g^{i_1},g^{i_2},g^{i_3})x,
$$
endowed with the standard Hopf algebra structure with comultiplication map $\Delta$, counit $\epsilon$ and antipode $S$, given by
\begin{align*}
&\Delta(\mathbf{g}):= \mathbf{g}\ot \mathbf{g}, && \Delta(x):= 1\ot x+x\ot z,\\
& \epsilon(\mathbf{g}):=1, && \epsilon(x):=0\\
& S(\mathbf{g}):=\mathbf{g}^{-1}, && S(\mathbf{g}x):=-xz^{-1}\mathbf{g}^{-1},
\end{align*}
where $\mathbf{g}$ denotes an arbitrary element of $G$. Let $S_3$ be the symmetric group in $\{1,2,3\}$. It is easy to check that the map
$$
\theta\colon S_3^{\op}\to \Aut_{\chi,z}(G),
$$
defined by $\theta(\sigma) (g^{i_1}, g^{i_2}, g^{i_3}):= (g^{i_{\sigma(1)}},g^{i_{\sigma(2)}},g^{i_{\sigma(3)}})$, is an isomorphism.

\subsubsection{$\bm{H_{\mathcal{D}}}$-spaces}\label{ex1p1} Let $V$ be a $\mathds{C}$-vector space. By Proposition~\ref{estructuras trenzadas de HsubD} to have a left $H_{\mathcal{D}}$-space structure with underlying vector space $V$ is ``the same'' that to have a gradation
$$
V=\bigoplus_{\sigma\in S_3} V_{\sigma}
$$
and an automorphism $\alpha\colon V\to V$ such that $\alpha(V_\sigma)=V_{\sigma}$ for all $\sigma\in S_3$. The structure map
$$
s\colon H_{\mathcal{D}}\ot V\longrightarrow V\ot H_{\mathcal{D}},
$$
constructed from these data, is given by
\begin{align*}
& s\bigl((g^{i_1},g^{i_2},g^{i_3})\ot v\bigr) := v\ot (g^{i_{\sigma(1)}}, g^{i_{\sigma(2)}}, g^{i_{\sigma(3)}})
\shortintertext{and}
& s\bigl((g^{i_1},g^{i_2},g^{i_3})x\ot v\bigr) := \alpha(v)\ot (g^{i_{\sigma(1)}}, g^{i_{\sigma(2)}}, g^{i_{\sigma(3)}})x,
\end{align*}
for each $v\in V_{\sigma}$.

\subsubsection{$\bm{H_{\mathcal{D}}}$-comodules} Let $V$ be a $\mathds{C}$-vector space. By Corollary~\ref{segunda caracterizacion de HsubD comodules} each right $H_{\mathcal{D}}$-comodule structure $(V,s)$ with underlying vector space $V$ is univocally determined by the following data:

\begin{enumerate}[label=\emph{(\alph*)}]

\smallskip

\item A decomposition
$$
\quad\qquad V=\bigoplus_{(\mathbf{g},\sigma)\in G\times S_3^{\op}} V_{\mathbf{g},\sigma},
$$

\smallskip

\item An automorphism $\alpha\colon V\to V$ that satisfies $\alpha\bigl(V_{\mathbf{g},\sigma}\bigr) = V_{\mathbf{g},\sigma}$ for all $(\mathbf{g},\sigma)\in G\times S_3^{\op}$,

\smallskip

\item A map $U\colon V\to V$ such that
$$
\quad\qquad U\circ \alpha= \alpha\circ U, \qquad U^2=0\qquad\text{and}\qquad U\bigl(V_{\mathbf{g},\sigma}\bigr)\subseteq V_{\mathbf{g}z,\sigma}\quad \text{for all $(\mathbf{g},\sigma)\in G\times S_3^{\op}$.}
$$
\smallskip
\end{enumerate}
The formula for the transposition $s$ of $H_{\mathcal{D}}$ on $V$ is the one obtained in Subsection~\ref{ex1p1} (where we take $V_{\sigma}:=\bigoplus_{\mathbf{g}\in G} V_{\mathbf{g},\sigma}$ for each $\sigma\in S_3$), while the $H_{\mathcal{D}}$-coaction $\nu$ is given by
$$
\nu(v) = v\ot (g^{i_1},g^{i_2},g^{i_3}) + U(v)\ot (g^{i_1+1},g^{i_2+1},g^{i_3+1})x,
$$
for $v\in \bigoplus_{\sigma\in S_3} V_{\mathbf{g},\sigma}$ with $\mathbf{g} = (g^{i_1},g^{i_2},g^{i_3})$.

\smallskip

Next given a decomposition as in item~(a), we give a proceeding to construct an automorphism $\alpha\colon V\to V$ and a map $U\colon V\to V$ satisfying the conditions required in items~(b) and~(c): First we decompose each space $V_{\mathbf{g},\sigma}$ as a direct sum
$$
V_{\mathbf{g},\sigma}=V_{\mathbf{g},\sigma}^0\oplus V_{\mathbf{g},\sigma}^1,
$$
in such a way that $\dim_{\mathds{C}}(V_{\mathbf{g},\sigma}^1)\le \dim_{\mathds{C}}(V_{\mathbf{g}z,\sigma}^0)$, and we fix an injective morphisms
$$
U_{\mathbf{g},\sigma}\colon V_{\mathbf{g},\sigma}^1\longrightarrow V_{\mathbf{g}z,\sigma}^0,
$$
for each $(\mathbf{g},\sigma)\in G\times S_3^{\op}$. Then we define $U$ on $V_{\mathbf{g},\sigma}$, by
$$
U(v) = \begin{cases} U_{\mathbf{g},\sigma}(v) & \text{if $v\in V_{\mathbf{g},\sigma}^1$,}\\ 0 & \text{if $v\in V_{\mathbf{g},\sigma}^0$.}\end{cases}
$$
It remains to construct $\alpha$. Let $(\mathbf{g},\sigma)\in G\times S_3^{\op}$ arbitrary. Since $U\circ \alpha =\alpha\circ U$, $\alpha(V_{\mathbf{g},\sigma})\subseteq V_{\mathbf{g},\sigma}$ and $U_{\mathbf{g},\sigma}$ is injective, there exists morphisms
$$
\alpha^0_{\mathbf{g},\sigma}\colon V_{\mathbf{g},\sigma}^0 \longrightarrow V_{\mathbf{g},\sigma}^0,\qquad \alpha^1_{\mathbf{g},\sigma}\colon V_{\mathbf{g},\sigma}^1 \longrightarrow V_{\mathbf{g},\sigma}^1 \qquad\text{and}\qquad \alpha^{10}_{\mathbf{g},\sigma}\colon V_{\mathbf{g},\sigma}^1 \longrightarrow V_{\mathbf{g},\sigma}^0,
$$
such that
$$
\alpha(v_0,v_1)= \bigl(\alpha^0_{\mathbf{g},\sigma}(v_0)+\alpha^{10}_{\mathbf{g},\sigma}(v_1),\alpha^1_{\mathbf{g},\sigma}(v_1)\bigr)\qquad\text{for all $(v_0,v_1)\in V_{\mathbf{g},\sigma}^0\oplus V_{\mathbf{g},\sigma}^1$.}
$$
Moreover, since $\alpha$ is an automorphism, the maps $\alpha^0_{\mathbf{g},\sigma}$ and $\alpha^1_{\mathbf{g},\sigma}$ are also automorphisms. All these maps can be constructed as follows: For each $(\mathbf{g},\sigma)\in G\times S_3^{\op}$ we take an arbitrary automorphism $\alpha^1_{\mathbf{g},\sigma}$ of $V_{\mathbf{g},\sigma}^1$. Then, for each $(\mathbf{g},\sigma)\in G\times S_3^{\op}$, we choose $\alpha^0_{\mathbf{g},\sigma}$ as an automorphism of $V_{\mathbf{g},\sigma}^0$ such that
$$
\alpha^0_{\mathbf{g},\sigma}\bigl(U_{\mathbf{g}z,\sigma}(v)\bigr) = U_{\mathbf{g},\sigma}\bigl(\alpha^1_{\mathbf{g}z,\sigma}(v)\bigr)\qquad\text{for all $v\in V^1_{\mathbf{g}z,\sigma}$}
$$
(which is forced by the condition $U\circ \alpha = \alpha \circ U$). Finally, we take $\alpha^{10}_{\mathbf{g},\sigma}$ as an arbitrary automorphism.

\begin{remark}\label{coinvariante ejemplo 1} By Corollary~\ref{coinvarianes de un HsubD comodulo} we know that $V^{\coH_{\mathcal{D}}}= V_{1_G}\cap \ker (U)$, where $V_{1_G}=\bigoplus_{\sigma\in S_3} V_{1_G,\sigma}$.
\end{remark}

\begin{remark}\label{caso clasico comodulos ejemplo 1} We are in the classical case (i.e. $s$ is the flip) iff $V_{\mathbf{g},\sigma}=0$ for $\sigma \ne \ide$ and $\alpha$ is the identity map. So, in this case the decomposition in item a) above have at most eight nonzero summands, item b) becomes trivial and the first condition in item c) also becomes trivial.
\end{remark}

\subsubsection{Transpositions of $\bm{H_{\mathcal{D}}}$ on an algebra} By Proposition~\ref{estructura de transposiciones de HsubD}, for each $\mathds{C}$-algebra $B$, to have a transposition $s\colon H_{\mathcal{D}}\ot B \longrightarrow B\ot H_{\mathcal{D}}$ is equivalent to have an algebra gradation
$$
B=\bigoplus_{\sigma\in S_3^{\op}} B_{\sigma}
$$
and an automorphism of algebras $\alpha \colon B\to B$ such that $\alpha(B_{\sigma})= B_{\sigma}$ for all $\sigma\in S_3$. The structure map $s\colon H_{\mathcal{D}}\ot B\longrightarrow B\ot H_{\mathcal{D}}$, constructed from these data, is the same as in Subsection~\ref{ex1p1}.

\subsubsection{Right $\bm{H_{\mathcal{D}}}$-comodule algebras} By the discussion above Definition~\ref{graduacion compatible con D} we know that the group $S_3^{\op}$ acts on $G$ via
$$
\sigma\cdot (g^{i_1},g^{i_2},g^{i_3}):= (g^{i_{\sigma(1)}},g^{i_{\sigma(2)}},g^{i_{\sigma(3)}}).
$$
Consider the semidirect product $G(\chi,z):= G\rtimes S_3^{\op}$. We are going to work with $G(\chi,z)^{\op}$. Its underlying set is $C_2\times C_2 \times C_2\times S_3$ and its product is given by
$$
(g^{i_1},g^{i_2},g^{i_3},\sigma) (g^{j_1},g^{j_2},g^{j_3},\tau)= (g^{j_1+i_{\tau(1)}},g^{j_2+i_{\tau(2)}},g^{j_3+i_{\tau(3)}},\sigma\circ \tau).
$$
Let $B$ be a $\mathds{C}$-algebra. By Theorem~\ref{caracterizacion de HsubD comodule algebras} to have a right $H_{\mathcal{D}}$-comodule algebra $(B,s)$ is equivalent to have

\begin{enumerate}[label=\emph{(\alph*)}]

\smallskip

\item a $G(\chi,z)^{\op}$-gradation
$$
\qquad\quad B= \bigoplus_{(\mathbf{g},\sigma)\in G(\chi,z)^{\op}} B_{\mathbf{g},\sigma}
$$
of $B$ as an algebra,

\smallskip

\item an automorphism of algebras $\alpha\colon B\to B$ such that
$$
\qquad\quad \alpha\bigl(B_{\mathbf{g},\sigma}\bigr) \subseteq B_{\mathbf{g},\sigma}\quad \text{for all $(\mathbf{g},\sigma)\in G(\chi,z)^{\op}$,}
$$

\smallskip

\item a map $U\colon B\to B$ such that

\begin{itemize}

\smallskip

\item[-] $U\circ \alpha=\alpha\circ U$,

\smallskip

\item[-] $U^2=0$,

\smallskip

\item[-] $U\bigl(B_{\mathbf{g},\sigma}\bigr)\subseteq B_{\mathbf{g}z,\sigma}$\quad \text{for all $(\mathbf{g},\sigma)\in G(\chi,z)^{\op}$,}

\smallskip

\item[-] the equality
$$
\quad\qquad\qquad U(bc)=bU(c) + (-1)^{i_1+i_2+i_3}U(b)\alpha(c)
$$
holds for all $b\in B$ and $c\in B_{(g^{i_1},g^{i_2},g^{i_3})}:= \bigoplus_{\sigma\in S_3} B_{(g^{i_1},g^{i_2},g^{i_3}),\sigma}$.

\end{itemize}

\end{enumerate}

\begin{remark}\label{caso clasico comodulo algebras ejemplo 1} We are in the classical case (i.e. $s$ is the flip) iff $B_{\mathbf{g},\sigma}=0$ for $\sigma \ne \ide$ and $\alpha$ is the identity map. So, in this case the gradation in item~(a) is a $G$-gradation, item~(b) is trivial, and item~(c) is considerably simplified.
\end{remark}

\subsubsection{Right $\bm{H_{\mathcal{D}}}$-cleft extensions} Let $C:=B^{\coH_{\mathcal{D}}}$. By Corollary~\ref{coinvarianes de un HsubD comodulo} we know that
$$
C=B_{1_G}\cap \ker(U)= \bigoplus_{\sigma\in S_3} B_{1_G,\sigma}\cap \ker(U).
$$
By Theorem~\ref{caracterizacion de cleft} and the comment bellow that result, the extension $(C\hookrightarrow B,s)$ is cleft iff there exists $b_x\in B$ and  a family $(b_\mathbf{g})_{\mathbf{g}\in G}$ of elements of $B^{\times}$, such that

\begin{enumerate}[label=\emph{(\alph*)}]

\smallskip

\item $b_{1_G}=1$,

\smallskip

\item $b_{\mathbf{g}}\in B_{\mathbf{g},\ide}\cap \ker(U)$ for all $\mathbf{g}\in G$,

\smallskip

\item $b_x\in B_{(g,g,g),\ide}\cap U^{-1}(1)$,

\smallskip

\item $\alpha(b_x)=b_x$,

\smallskip

\item $\alpha(b_{\mathbf{g}})=b_{\mathbf{g}}$ for all $\mathbf{g}\in G$.

\smallskip

\end{enumerate}
By Theorem~\ref{algunas propiedades de las extensiones cleft} we know that

\begin{enumerate}

\smallskip

\item $B$ is a free left $C$-module with basis $\{b_gb_x^i : g\in G \text{ and } 0\le i\le 1\}$.

\smallskip

\item By Theorem~\ref{algunas propiedades de las extensiones cleft}(3), the weak action of $H_{\mathcal{D}}$ on $C$ associated with $\gamma$ according to item~(5) of Theorem~\ref{equiv entre cleft, H-Galois con base normal e isomorfo a un producto cruzado}, is given by
\begin{align*}
&\quad (g^{i_1}, g^{i_2}, g^{i_3})\rightharpoonup c = b_{(g^{i_1}, g^{i_2}, g^{i_3})} c b^{-1}_{(g^{i_{\sigma(1)}}, g^{i_{\sigma(2)}} g^{i_{\sigma(3)}})}
\shortintertext{and}
&\quad (g^{i_1}, g^{i_2}, g^{i_3})x\rightharpoonup c = b_{(g^{i_1}, g^{i_2}, g^{i_3})}\bigl(b_x c - \alpha(c) b_x\bigr) b^{-1}_{(g^{i_{\sigma(1)}+1}, g^{i_{\sigma(2)}+1}, g^{i_{\sigma(3)}+1})}
\end{align*}
for $c\in B_{1_G,\sigma}\cap \ker(U)$.
\smallskip

\item By Theorem~\ref{algunas propiedades de las extensiones cleft}(3), the two cocycle $\sigma\colon H_{\mathcal{D}}\ot H_{\mathcal{D}} \to C$, associated with $\gamma$ according to item~(5) of Theorem~\ref{equiv entre cleft, H-Galois con base normal e isomorfo a un producto cruzado}, is given by
\begin{align*}
&\quad\qquad \sigma(\mathbf{g}\ot \mathbf{h}) = b_{\mathbf{g}}b_{\mathbf{h}} b_{\mathbf{g}\mathbf{h}}^{-1},\\
&\quad\qquad \sigma(\mathbf{g}x \ot \mathbf{h}) = - \chi(\mathbf{h}) b_{\mathbf{g}}b_{\mathbf{h}}  b_x b_{\mathbf{g}\mathbf{h}(g,g,g)}^{-1} +  b_{\mathbf{g}} b_x b_{\mathbf{h}} b_{\mathbf{g}\mathbf{h}(g,g,g)}^{-1},\\
&\quad\qquad \sigma(\mathbf{g} \ot \mathbf{h}x) = 0
\shortintertext{and}
&\quad\qquad \sigma(\mathbf{g}x \ot \mathbf{h}x) = \chi(h) b_{\mathbf{g}}b_{\mathbf{h}}  b_x^2 b_{\mathbf{g}\mathbf{h}}^{-1},
\end{align*}
for $\mathbf{g},\mathbf{h}\in G$.
\end{enumerate}

\begin{remark} It is clear that once choiced $b^{(1)}_g:=b_{(g,1,1)}$, $b^{(2)}_g:=b_{(1,g,1)}$ and $b^{(3)}_g:=b_{(1,1,g)}$, one can take $b_{(g,g,1)}:=b^{(1)}_gb^{(2)}_g$, $b_{(g,1,g)}:=b^{(1)}_gb^{(3)}_g$, $b_{(1,g,g)}:=b^{(2)}_gb^{(3)}_g$ and $b_{(g,g,g)}:=b^{(1)}_gb^{(2)}_gb^{(3)}_g$.
\end{remark}

\subsection{Second example} Consider the datum $\mathcal{D}=(C_6,\chi,z,\lambda,q)$, where:

\begin{itemize}

\smallskip

\item[-] $C_6=\{1,g,g^2,g^3,g^4,g^5\}$ is the multiplicative cyclic group of order~$6$,

\smallskip

\item[-] $\chi\colon C_6\longrightarrow \mathds{C}$ is the character given by $\chi(g^i):=\xi^i$, where $\xi$ is a root of order~$3$ of~$1$,

\smallskip

\item[-] $z:=g$,

\smallskip

\item[-] $q=\xi$ and $\lambda=1$.

\smallskip

\end{itemize}
In this case $p:=\chi(z)=\xi$, $n=3$ and the Hopf braided $\mathds{C}$-algebra $H_{\mathcal{D}}$ of Corollary~\ref{algebras de Hopf trenzadas de K-R} is the $\mathds{C}$-algebra generated by the group $C_6$ and an element $x$ subject to the relations
$$
x^3=g^3-1 = -2\quad \text{and}\quad xg=\xi gx,
$$
endowed with the braided Hopf algebra structure with comultiplication map $\Delta$, counit $\epsilon$, antipode $S$ and braid $c_{\xi}$, given by
\begin{align*}
&\Delta(g^i):= g^i\ot g^i, && \Delta(x):= 1\ot x+x\ot g,\\
& \epsilon(g^i):=1, && \epsilon(x):=0\\
& S(g^ix^j):=(-1)^j\xi^{j(j-1)}x^j g^{-j-i}, \\
&c_{\xi}(g^ix^j\ot g^kx^l)=\xi^{jl} g^kx^l\ot g^ix^j.
\end{align*}
It is clear that $\Aut_{\chi,z}(C_6)=\{\ide\}$.

\subsubsection{$\bm{H_{\mathcal{D}}}$-spaces}\label{ex2p1}  Let $V$ be a $\mathds{C}$-vector space. By Proposition~\ref{estructuras trenzadas de HsubD} we know that to have an $H_{\mathcal{D}}$-space structure with underlying vector space $V$ is equivalent to have an automorphism~$\alpha\colon V\to V$ such that $\alpha^3=\ide$. The structure map $s\colon H_{\mathcal{D}}\ot V\longrightarrow V\ot H_{\mathcal{D}}$ construct from these data is given by
$$
s\bigl(g^ix^j\ot v\bigr) := \alpha^j(v)\ot g^ix^j.
$$

\subsubsection{$\bm{H_{\mathcal{D}}}$-comodules} Let $V$ be a $\mathds{C}$-vector space. By Corollary~\ref{segunda caracterizacion de HsubD comodules} the right $H_{\mathcal{D}}$-comodule structures $(V,s)$ with underlying vector space $V$ are univocally determined by the following data:

\begin{enumerate}[label=\emph{(\alph*)}]

\smallskip

\item a decomposition
$$
\quad\qquad V=\bigoplus_{g^i\in C_6} V_{g^i}=V_1\oplus V_g \oplus V_{g^2}\oplus V_{g^3} \oplus V_{g^4}\oplus V_{g^5},
$$

\smallskip

\item an automorphism $\alpha\colon V\to V$ that satisfies $\alpha^3=\ide$ and $\alpha\bigl(V_{g^i}\bigr) = V_{g^i}$ for all $i$,

\smallskip

\item a map $U\colon V\to V$ such that
$$
\quad\qquad U\circ \alpha= \xi\alpha\circ U, \quad U^3=0\quad\text{and}\quad U\bigl(V_{g^i}\bigr)\subseteq V_{g^{i-1}}\quad\text{for all $i$.}
$$

\smallskip
\end{enumerate}
The formula for the transposition $s$ of $H_{\mathcal{D}}$ on $V$ is the one obtained in Subsection~\ref{ex2p1}, while the $H_{\mathcal{D}}$-coaction $\nu$ is given by
$$
\nu(v) = v\ot g^i + U(v)\ot g^{i-1}x - \xi U^2(v)\ot g^{i-2}x^2 \qquad\text{for all $v\in V_{g^i}$.}
$$

\subsubsection{Transpositions of $\bm{H_{\mathcal{D}}}$ on an algebra} By Proposition~\ref{estructura de transposiciones de HsubD}, for each $\mathds{C}$-algebra $B$, to have a transposition $s\colon H_{\mathcal{D}}\ot B \longrightarrow B\ot H_{\mathcal{D}}$ is equivalent to have an automorphism of algebras $\alpha\colon B\to B$ such that $\alpha^3=\ide$. The structure map $s\colon H_{\mathcal{D}}\ot B\longrightarrow B\ot H_{\mathcal{D}}$, constructed from these data, is the same as in Subsection~\ref{ex2p1}.

\subsubsection{Right $\bm{H_{\mathcal{D}}}$-comodule algebras} Let $B$ be a $\mathds{C}$-algebra. By Theorem~\ref{caracterizacion de HsubD comodule algebras} to have a right $H_{\mathcal{D}}$-comodule algebra $(B,s)$ is equivalent to have

\begin{enumerate}[label=\emph{(\alph*)}]

\smallskip

\item a $C_6$-gradation
$$
\qquad\quad B= B_1\oplus B_g \oplus B_{g^2}\oplus B_{g^3} \oplus B_{g^4}\oplus B_{g^5},
$$
of $B$ as a vector space such that $1_B\in B_1$ and $B_{g^i}B_{g^j}\subseteq B_{g^{i+j}}\oplus B_{g^{i+j-3}}$ for all $i,j$.

\smallskip

\item an automorphism of algebras $\alpha\colon B\to B$ such that
$$
\qquad\quad \alpha^3=\ide\quad\text{and}\quad\alpha\bigl(B_{g^i}\bigr) \subseteq B_{g^i}\quad \text{for all $i$,}
$$

\smallskip

\item a map $U\colon B\to B$ such that

\begin{itemize}

\smallskip

\item[-] $U\circ \alpha=\xi \alpha\circ U$,

\smallskip

\item[-] $U^3=0$,

\smallskip

\item[-] $U\bigl(B_{g^i}\bigr)\subseteq B_{g^{i-1}}$\quad \text{for all $i$,}

\smallskip

\item[-] the equality
$$
\quad\qquad\qquad U(bc)=bU(c) + \xi^i U(b)\alpha(c)
$$
holds for all $b\in B$ and $c\in B_{g^i}$,

\smallskip

\item[-] For $b\in B_{g^i}$ and $c\in B_{g^j}$, the component $(bc)_{g^{i+j-3}}\in B_{g^{i+j-3}}$ of $bc$ is given by
$$
\quad\qquad (bc)_{g^{i+j-3}}= \xi^j U(b) \alpha(U^2(c)) + \xi^{2j} U^2(b) \alpha^2(U(c)).
$$
\end{itemize}
\end{enumerate}

\subsubsection{Right $\bm{H_{\mathcal{D}}}$-cleft extensions}
Let $C:=B^{\coH_{\mathcal{D}}}$. By Corollary~\ref{coinvarianes de un HsubD comodulo} we know that
$$
C=B_{1}\cap \ker(U).
$$
By Theorem~\ref{caracterizacion de cleft} and the comment bellow that result, the extension $(C\hookrightarrow B,s)$ is cleft iff there exists $b_x\in B$ and  a family $(b_{g^i})_{g^i\in C_6}$ of elements of $B^{\times}$, such that
\begin{enumerate}[label=\emph{(\alph*)}]

\smallskip

\item $b_1=1$,

\smallskip

\item $b_{g^i}\in B_{g^i}\cap \ker(U)$ for all $g^i\in C_6$,

\smallskip

\item $b_x\in B_{g}\cap U^{-1}(1)$,

\smallskip

\item $\alpha(b_x)=\xi b_x$,

\smallskip

\item $\alpha(b_{g^i})=b_{g^i}$ for all $g^i\in C_6$.

\end{enumerate}
By Theorem~\ref{algunas propiedades de las extensiones cleft} we know that

\begin{enumerate}

\smallskip

\item $B$ is a free left $C$-module with basis $\{b_{g^i}b_x^j : g^i\in C_6 \text{ and } 0\le j\le 2\}$.

\smallskip

\item The weak action of $H_{\mathcal{D}}$ on $C$ associated with $\gamma$ according to item~(5) of Theorem~\ref{equiv entre cleft, H-Galois con base normal e isomorfo a un producto cruzado}, is given by
\begin{align*}
&\quad  g^i\rightharpoonup c = b_{g^i} c b^{-1}_{g^i},\\
& \quad g^i x \rightharpoonup c = b_{g^i} \bigl(b_x c - \alpha(c)b_x\bigr) b^{-1}_{g^{i+1}}
\shortintertext{and}
&\quad g^i x^2 \rightharpoonup c 
%
= b_{g^i} \bigl( b^2_x c + \xi b_x\alpha(c)b_x + \alpha^2(c)b^2_x\bigr) b^{-1}_{g^{i+2}},
\end{align*}
for $c\in C$.

\smallskip

\item The two cocycle $\sigma\colon H_{\mathcal{D}}\ot H_{\mathcal{D}} \to C$, associated with $\gamma$ according to item~(5) of Theorem~\ref{equiv entre cleft, H-Galois con base normal e isomorfo a un producto cruzado}, is given by
\begin{align*}
&\quad\qquad \sigma(g^i\ot g^j) = b_{g^i}b_{g^j} b_{g^{i+j}}^{-1},\\
&\quad\qquad \sigma(g^i x\ot g^j) = -\xi^j b_{g^i}b_{g^j} b_x b_{g^{i+j+1}}^{-1} + b_{g^i}b_x b_{g^j}  b_{g^{i+j+1}}^{-1},\\
&\quad\qquad \sigma(g^i x^2\ot g^j) = \xi^{2j+2} b_{g^i}b_{g^j} b^2_x b_{g^{i+j+2}}^{-1} + \xi^{j+1}b_{g^i}b_xb_{g^j} b_x b_{g^{i+j+2}}^{-1}+ b_{g^i}b^2_xb_{g^j} b_{g^{i+j+2}}^{-1},\\
&\quad\qquad \sigma(g^i \ot g^j x) = 0,\\
&\quad\qquad \sigma(g^i x \ot g^j x) = 0,\\
&\quad\qquad \sigma(g^i x^2\ot g^j x) = -\xi^{2j} b_{g^i}b_{g^j}  b_{g^{i+j+3}}^{-1} -\xi^{2j} b_{g^i}b_{g^j} b_{g^{i+j}}^{-1} + \xi^{2j} b_{g^i}b_{g^j}  b_x^3 b_{g^{i+j+3}}^{-1},\\
&\quad\qquad \sigma(g^i  \ot g^j x^2) = 0,\\
&\quad\qquad \sigma(g^i x\ot g^j x^2) = -\xi^{j} b_{g^i}b_{g^j}b_{g^{i+j+3}}^{-1} -\xi^{j} b_{g^i}b_{g^j} b_{g^{i+j}}^{-1} + \xi^{j} b_{g^i}b_{g^j}b_x^3 b_{g^{i+j+3}}^{-1}
\shortintertext{and}
&\quad\qquad \sigma(g^i x^2\ot g^j x^2)  = -\xi^{2j+1} b_{g^i}b_{g^j}b_x b_{g^{i+j+4}}^{-1} -\xi^{2j+1} b_{g^i}b_{g^j}b_x b_{g^{i+j+1}}^{-1} + \xi^{j+1} b_{g^i}b_xb_{g^j}b_{g^{i+j+4}}^{-1} \\
&\phantom{\quad\qquad \sigma(g^i x^2\ot g^j x^2)} + \xi^{j+1} b_{g^i}b_xb_{g^j}b_{g^{i+j+1}}^{-1} + \xi^{2j+1} b_{g^i}b_{g^j}b^4_x b_{g^{i+j+4}}^{-1} - \xi^{j+1} b_{g^i}b_xb_{g^j}b^3_x b_{g^{i+j+4}}^{-1}.
\end{align*}
\end{enumerate}

\begin{bibdiv}
\begin{biblist}

\bib{Br}{article}{
  author={Brzezi{\'n}ski, Tomasz},
  title={Crossed products by a coalgebra},
  journal={Comm. Algebra},
  volume={25},
  date={1997},
  number={11},
  pages={3551--3575},
  issn={0092-7872},
  review={\MR{1468823 (98i:16034)}},
  doi={10.1080/00927879708826070},
}

\bib{D-T}{article}{
  author={Doi, Yukio},
  author={Takeuchi, Mitsuhiro},
  title={Quaternion algebras and Hopf crossed products},
  journal={Comm. Algebra},
  volume={23},
  date={1995},
  number={9},
  pages={3291--3325},
  issn={0092-7872},
  review={\MR{1335303 (96d:16049)}},
  doi={10.1080/00927879508825403},
}

\bib{G-G}{article}{
  author={Guccione, Jorge A.},
  author={Guccione, Juan J.},
  title={Theory of braided Hopf crossed products},
  journal={J. Algebra},
  volume={261},
  date={2003},
  number={1},
  pages={54--101},
  issn={0021-8693},
  review={\MR{1967157 (2004d:16054)}},
  doi={10.1016/S0021-8693(02)00546-X},
}

\bib{K-R}{article}{
  author={Krop, Leonid},
  author={Radford, David E.},
  title={Finite-dimensional Hopf algebras of rank one in characteristic
  zero},
  journal={J. Algebra},
  volume={302},
  date={2006},
  number={1},
  pages={214--230},
  issn={0021-8693},
  review={\MR{2236601 (2008b:16064)}},
  doi={10.1016/j.jalgebra.2006.03.031},
}

\bib{M}{article}{
  author={Masuoka, Akira},
  title={Cleft extensions for a Hopf algebra generated by a nearly
  primitive element},
  journal={Comm. Algebra},
  volume={22},
  date={1994},
  number={11},
  pages={4537--4559},
  issn={0092-7872},
  review={\MR{1284344 (96e:16049)}},
  doi={10.1080/00927879408825086},
}

\end{biblist}
\end{bibdiv}

\end{document}